\documentclass[a4,usenames]{amsart}

\usepackage[utf8]{inputenc} 
\usepackage[T1]{fontenc}
\usepackage[english]{babel}
\usepackage{mathtools}
\usepackage{amssymb}
\usepackage{amsthm}
\usepackage{mathrsfs}
\usepackage{scalerel}

\usepackage{tikz,graphicx}

\usetikzlibrary{automata,positioning}
\usetikzlibrary{calc}


\usepackage{enumerate,url,float,lscape}
\usepackage{hyperref}

\usepackage[normalem]{ulem}


\usepackage{aliascnt}

\theoremstyle{plain}
\newtheorem{theo}{Theorem}[section]

\newaliascnt{cor}{theo}
\newaliascnt{prop}{theo}
\newaliascnt{lemma}{theo}

\newtheorem{lemma}[lemma]{Lemma}
\newtheorem{prop}[prop]{Proposition}
\newtheorem{cor}[cor]{Corollary}

\aliascntresetthe{cor}
\aliascntresetthe{prop}
\aliascntresetthe{lemma}

\theoremstyle{definition} 

\newaliascnt{defi}{theo}
\newaliascnt{assum}{theo}
\newaliascnt{assums}{theo}
\newaliascnt{prob}{theo}

\aliascntresetthe{defi}
\aliascntresetthe{assum}
\aliascntresetthe{assums}
\aliascntresetthe{prob}

\theoremstyle{remark}

\newaliascnt{rems}{theo}
\newaliascnt{rem}{theo}
\newaliascnt{exa}{theo}
\newaliascnt{exs}{theo}

\newtheorem{rem}[rem]{Remark}
\newtheorem{exa}[exa]{Example}

\aliascntresetthe{rems}
\aliascntresetthe{rem}
\aliascntresetthe{exa}
\aliascntresetthe{exs}

%


\numberwithin{equation}{section}
\numberwithin{lemma}{section}

 \def\mG{\mathsf{G}}

 \def\mV{\mathsf{V}}

 \def\mE{\mathsf{E}}

 \def\mv{\mathsf{v}}

 \def\mw{\mathsf{w}}



\newcommand{\R}{\mathbb{R}}

\DeclareMathOperator{\sign}{sign}
\DeclareMathOperator{\real}{Re}

\newcommand{\C}{\mathbb{C}}
\newcommand{\N}{\mathbb{N}}

\newcommand{\ud}{\,\mathrm{d}}
\newcommand{\e}{\mathrm{e}}





\DeclareMathOperator{\sgn}{sgn}

\DeclareMathOperator{\dist}{dist}

\title[Pointwise eigenvector estimates by landscape functions]
{Pointwise eigenvector estimates by landscape functions:\\ some variations on the Filoche--Mayboroda--van den Berg bound}

\author[D.~Mugnolo]{Delio Mugnolo}

\address{Delio Mugnolo, Lehrgebiet Analysis, Fakult\"at Mathematik und Informatik, Fern\-Universit\"at in Hagen, D-58084 Hagen, Germany}
\email{delio.mugnolo@fernuni-hagen.de}


\subjclass[2010]{Primary 34B45, 35P15, 34L15, 35P30; Secondary 46B42, 47B65}

\keywords{Landscape functions; positive $C_0$-semigroups; $p$-Laplacians; Magnetic Schrödinger operators}

\thanks{
We are grateful to Bobo Hua (Fudan University, Shanghai) for the computation of the ground state of the dicrete $p$-Laplacian in Section~\ref{exa:main-nonlinear} and to José M.\ Mazón (Valencia) for references concerning general comparison principle for $p$-Laplacians. We warmly thank Jochen Glück (Wuppertal) and Stefan Steinerberger (University of Washington, Seattle) for interesting discussions. Finally, we wish to thank the anonymous referee for pointing us at the bounds on the principal eigenvalue in~\cite{DonVar75,DonVar76}.\\
The author was partially supported by the Deutsche Forschungsgemeinschaft (Grant 397230547).
}

\textwidth=37.7cc
\oddsidemargin.2mm
\evensidemargin.2mm

\begin{document}
\begin{abstract}
Landscape functions are a popular tool used to provide upper bounds for eigenvectors of Schrödinger operators on domains. We review some known results obtained in the last ten years, unify several approaches used to achieve such bounds, and extend their scope to a large class of linear and nonlinear operators. 
We also use landscape functions to derive lower estimates on the principal eigenvalue -- much in the spirit of earlier results by Donsker--Varadhan and Ba\~nuelos--Carrol -- as well as upper bounds on heat kernels.
Our methods solely rely on order properties of operators: we devote special attention to the case where the relevant operators enjoy various forms of elliptic or parabolic maximum principles. 
Additionally, we illustrate our findings
with several examples, including $p$-Laplacians on domains and graphs as well as Schrödinger operators with magnetic and electric potential,
also by means of elementary numerical experiments.

\end{abstract}

\maketitle

\section{Introduction}

Several authors have pursued in the last ten years the task of deriving pointwise bounds of the form
\begin{equation}\label{eq:basic-fm}
\frac{|\varphi(x)|}{\|\varphi\|_\infty}\le |\lambda| v(x),\qquad x\in \Omega,
\end{equation}
on \underline{all} eigenpairs $(\lambda,\varphi)$
of suitable differential operators $A$ in terms of \underline{one} auxiliary function $v:\Omega \to (0,\infty)$: such $v$ satisfying \eqref{eq:basic-fm} or, more generally,
\[
\frac{|\varphi(x)|}{\|\varphi\|_\infty} \le c v(x),\qquad x\in \Omega,
\]
 for some $c=c(\lambda)>0$, have been often called \textit{landscape functions} since~\cite{FilMay12}.
In the case of Schrödinger operators, $A:=-\Delta+V$ with $V:\Omega\to (0,\infty)$ and with Dirichlet conditions on the boundary of a bounded 
$\Omega\subset\R^d$, Filoche and Mayboroda have proved in \cite{FilMay12}  that a possible landscape function is given by the unique solution of
\begin{equation}\label{eq:tors}
Av(x)=1,\qquad x\in \Omega;
\end{equation}
the same estimate has been independently observed in~\cite{Ber12} for $V\equiv 0$.
Observe that the Schrödinger operator $A=-\Delta+V$ satisfies a weak maximum principle, that is, $A^{-1}$ is a positive operator: hence, the solution $v$ of~\eqref{eq:tors} is indeed a positive function. Such $v$ is sometimes called the \textit{torsion function} of $\Omega$ (with respect to $\Delta+V$)
and has since~\cite{Pol48} been the subject of many investigations.

In the case of such Schrödinger operators, 
a common interpretation of \eqref{eq:basic-fm} is that any eigenfunction $\varphi$ for the eigenvalue $\lambda$ can only have peaks (``localize'') in the set $\{x\in \Omega:\frac{1}{v(x)}\le \lambda\}$. Because similar localization properties hold for the potential wells $\{x\in \Omega:V(x)\le \lambda\}$ (see, e.g.,~\cite[Theorems~3.4 and 3.10]{HisSig96}), it is tempting to interpret $\frac{1}{v}$ as a proxy for the potential $V$. The fact that $\frac{1}{v}$ is typically smoother than $V$
 and even more effective at predicting localization of eigenfunctions justifies the attention devoted to \eqref{eq:basic-fm} 
in the last years.

\medskip
To fix the ideas at the core of the theory of landscape functions, let us present its fundamental theorem in a general form that holds for operators $A$ on a Lebesgue space $L^2(X)$ that \textit{have dominated inverse}, i.e., such that
\[
|A^{-1}f(x)|\le A^{(-1)}_+|f(x)|\qquad \hbox{for all }f\in L^2(X)\hbox{ and a.e. }x\in X,
\]
for some bounded linear (and necessarily positive) operator $A^{(-1)}_+$ on $L^2(X)$.

\begin{theo}\label{thm:filmay-very-abs}
Let $X$ be a finite measure space, and 
let $A$ be an invertible closed linear operator on $L^2(X)$ with dominated inverse. If $(\lambda,\varphi)$ is an eigenpair of $A$ such that $\varphi\in L^\infty(X)$, then
\begin{equation}\label{eq:basic-fm-superabstr}
|\varphi(x)|\le |\lambda| \|\varphi\|_\infty A^{(-1)}_+ \mathbf{1}(x)\qquad \hbox{for a.e.\ }x\in X.
\end{equation}
\end{theo}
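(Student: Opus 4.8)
The plan is to start from the eigenvalue equation $A\varphi=\lambda\varphi$ and invert $A$. Since $A$ is invertible and $\varphi$ lies in its domain, we may write $\varphi=\lambda A^{-1}\varphi$, hence for a.e.\ $x\in X$
\[
|\varphi(x)|=|\lambda|\,|A^{-1}\varphi(x)|\le |\lambda|\,A^{(-1)}_+|\varphi|(x),
\]
using the defining domination property of $A^{(-1)}_+$ applied to $f=\varphi\in L^2(X)$. (Note $\varphi\in L^2(X)$ is automatic here: $X$ has finite measure and $\varphi\in L^\infty(X)$, so $\varphi\in L^\infty(X)\subset L^2(X)$.)

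The second step is to bound $A^{(-1)}_+|\varphi|$ pointwise by $A^{(-1)}_+\mathbf 1$. Since $|\varphi|\le \|\varphi\|_\infty\mathbf 1$ almost everywhere and $A^{(-1)}_+$ is a \emph{positive} operator, monotonicity gives $A^{(-1)}_+|\varphi|\le \|\varphi\|_\infty\, A^{(-1)}_+\mathbf 1$ a.e. Here one uses that $\mathbf 1\in L^2(X)$, again because $X$ has finite measure — this is exactly where that hypothesis is needed, both to make sense of $A^{(-1)}_+\mathbf 1$ and to have $\|\varphi\|_\infty<\infty$ control the $L^2$-comparison. Chaining the two inequalities yields
\[
|\varphi(x)|\le |\lambda|\,\|\varphi\|_\infty\, A^{(-1)}_+\mathbf 1(x)\qquad\text{for a.e.\ }x\in X,
\]
which is \eqref{eq:basic-fm-superabstr}.

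There is essentially no obstacle: the only points requiring a word of care are the finite-measure hypothesis (so that $\mathbf 1\in L^2(X)$ and the operator $A^{(-1)}_+$ can be evaluated at it, and so that $\varphi\in L^2$), and the fact that "positive operator" is understood in the Banach-lattice sense, i.e.\ $0\le f\le g$ implies $0\le A^{(-1)}_+f\le A^{(-1)}_+g$ — this follows from positivity together with linearity applied to $g-f\ge 0$. One should also remark that the pointwise inequalities hold only almost everywhere, and that all manipulations are between genuine $L^2$-functions, so no subtlety about representatives arises beyond the usual a.e.\ caveats. If one wishes, the normalised form \eqref{eq:basic-fm} is then immediate by taking $v:=A^{(-1)}_+\mathbf 1$ and dividing by $\|\varphi\|_\infty$, provided $\|\varphi\|_\infty\ne 0$.
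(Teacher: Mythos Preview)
Your proof is correct and follows exactly the paper's approach: write $\varphi=A^{-1}(\lambda\varphi)$, apply the domination $|A^{-1}\varphi|\le A^{(-1)}_+|\varphi|$, and then use positivity of $A^{(-1)}_+$ together with $|\varphi|\le\|\varphi\|_\infty\mathbf{1}$. The paper condenses this into a single chain of inequalities (and later phrases it in the language of principal ideals $E_\rho$), but the argument is identical.
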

By \eqref{eq:basic-fm-superabstr}, any eigenfunction $\varphi$ of $A$ for the eigenvalue $\lambda$ can only localize in the set $\{x\in X:\frac{1}{A^{(-1)}_+ \mathbf{1}(x)}\le |\lambda|\}$.
\medskip
Theorem~\ref{thm:filmay-very-abs} was proved by van den Berg in~\cite[Theorem~5]{Ber12}
 (for the free Laplacian)
 and by Filoche and Mayboroda in~\cite[Proposition~0.1]{FilMay12b} 
 (for Schrödinger operators with positive potential) 
using  -- beside the maximum principle -- self-adjointness, resolvent compactness, and the existence of a positive Green function (\cite{FilMay12}) or a positive heat kernel (\cite{Ber12}); but their proofs do not actually depend on whether $V\equiv 0$ or not.
Indeed, it was observed by Steinerberger in \cite[page~2903]{Ste17} (and, in a slightly different context, already in the proof of~\cite[Lemma~3]{LyrMayFil15}) that these assumptions can be removed and the proof boils down to a one-liner that can be performed under the sole assumption that the relevant operator has dominated inverse.

\begin{proof}[Proof of \autoref{thm:filmay-very-abs}] Let $\rho:=\mathbf{1}$ 
to see
\begin{equation}\label{eq:musterfilmay}
|\varphi|= |A^{-1}(\lambda\varphi)|\le  A^{(-1)}_+|\lambda\varphi|\le A^{(-1)}_+(|\lambda|\|\varphi\|_\infty \rho)=|\lambda|\|\varphi\|_\infty A^{(-1)}_+ \rho.\qedhere
\end{equation}
\end{proof}

This works upon choosing $A^{(-1)}_+:=A^{-1}$ whenever $A$ has positive inverse (the case treated in~\cite{Ber12,FilMay12,Ste17}) and in fact even for general dominating operators $A^{(-1)}_+$: the latter case has been discussed in the settings of operators whose inverse has a (not necessarily positive!) absolutely integrable kernel in~\cite{FilMay12b}, and of matrices in~\cite{LemPacOvd20}; in the latter case, lower bounds for eigenfunctions in their localization regions are discussed in~\cite{LuSte18}.
 
 \medskip
Being interested in \textit{pointwise} bounds, it appears natural to lift the problem and regard \eqref{eq:basic-fm} as an inequality between ``absolute value'' of vectors, just like in \eqref{eq:musterfilmay}.
In this note we will extend the ideas in~\cite{Ber12,FilMay12} to the much more general setting of operators on Banach lattices: a canonical setting where absolute values of vectors are well-defined.
This includes elliptic operators on Lebesgue spaces or spaces of continuous functions vanishing at infinity over a locally compact metric space, but also more exotic objects, like pseudo-differential operators (including fractional Laplacians or Dirichlet-to-Neumann operators), infinite dimensional Ornstein--Uhlenbeck processes, or Lindbladian generators on spaces of Schatten--von Neumann operators over a  Hilbert space. 

\autoref{thm:filmay-very-abs} especially applies to large classes of (not necessarily self-adjoint) uniformly elliptic and even degenerate second order operators satisfying a weak maximum principle,
and in particular to Schrödinger operators with a positive (possibly singular) potential and/or with boundary conditions on bounded open domains of $\R^d$ or on suitable subset of compact manifolds -- this is the setting discussed in~\cite{FilMay12,ArnDavFil19} -- but also on finite metric \cite{HarMal20,MugPlu23} or combinatorial graphs \cite{FilMayTao21}: in these settings, again, $\rho=\mathbf{1}$ is the usual choice.

 The scope of this note is twofold. On the one hand, we borrow different \textit{Ansätze} proposed by several authors to sharpen the original bound by Filoche--Mayboroda--van den Berg and unify them: we can thus present, in an abstract functional analytical framework, a minimal set of assumptions that allow to re-derive numerous landscape-functions-based estimates for differential and difference operators  that have been presented by different authors over the last ten years. 
On the other hand, we are going to elaborate on \autoref{thm:filmay-very-abs}  in three different directions.
 
Firstly, we extend in  Section~\ref{sec:torsland-nonl} the scope of the theory of landscape functions to nonlinear homogeneous operators, whose spectral theory  relies on well-known variational tools, cf.~\cite[Chapter~III]{FucNecSou73}.

Secondly, we discuss in Section~\ref{sec:torsland-lin} two different linear setups to which the ideas of van den Berg and  Filoche--Mayboroda can be extended: we consider the cases of landscape functions for the eigenvectors of operators satisfying some very weak form of a maximum principle (namely, existence of a positive operator that dominates the inverse), and of integral operators (in the sense of~\cite[Definition~1.1]{AreBuk94}), in Section~\ref{sec:orderprop} and~\ref{sec:integr}, respectively.
We present sharper versions of \autoref{thm:filmay-very-abs} and argue that the key argument in their proof is the assumption that $-A$ generates a positive semigroup, or even a semigroup that is merely dominated by a positive one. In passing, we also complement~\eqref{eq:basic-fm} with a similar upper estimate in terms of a \textit{parabolic landscape function}: this corresponds to replacing the Green function by the heat kernel in the  estimates in~\cite{Ber12,FilMay12}; in the case of Schrödinger operators, this idea has already been explored in \cite[Theorem 2 and Corollary]{HosQuaSte22}.

Thirdly, we show in Section~\ref{sec:eigenv-est} that landscape function methods can be adapted to derive lower bounds on the principal eigenvalue of the relevant (linear or nonlinear)
  operators: we here elaborate on an old idea by Donsker--Varadhan~\cite{DonVar75,DonVar76} (as paraphrased e.g.\ in~\cite[Lemma~2.1]{BovGayKle05}), later rediscovered
e.g.\ in~\cite{BanCar94} and, for nonlinear homogeneous operators, in~\cite{GioSmi10}: observe that these results predate the development of the theory of landscape functions. We describe a whole class $\mathcal Q^A_+(E)$ of landscape functions -- including the torsion function $v:=A^{-1}\mathbf{1}$ -- and minimize their gauge norm within $\mathcal Q^A_+(E)$ to derive  an improved lower bound. 

We conclude our note reviewing in Section~\ref{sec:appl} several classes of operators to which our results can be applied: In Section~\ref{sec:toy-model} we will show how our techniques can be used to derive rather sharp estimates on the ground state of a self-adjoint operator; we will treat a nonlinear counterpart in Section~\ref{exa:main-nonlinear} and show that our methods can be seen as a relaxation of the search for the Cheeger constant. 
Finally, in Section~\ref{sec:magn-schr-stein} we present an application to a family of operators that may fail to have positive inverse: we derive from the Kato--Simon diamagnetic inequality 
a new proof of an estimate recently obtained in~\cite{HosQuaSte22} under somewhat stronger assumptions. 

\medskip

In the following we will assume that the reader is familiar with the theory of Banach lattices and of linear operators thereon, as presented e.g.\ in~\cite{Sch74,Nag86}: among other things, we will throughout use properties of positive linear (and order preserving nonlinear) operators, and sometimes assume that operators \textit{are dominated by other operators}, or that they even \textit{have a modulus operator}. 

An especially important notion is the following: Given a Banach lattice $E$ with positive cone $E_+$ and some $\rho\in E_+$, the \emph{principal ideal generated by $\rho$} is the vector space
\[
E_\rho:=\{f\in E: \exists c\ge 0 \hbox{ s.t. }|f|\le c\rho\}
\]
which by the results in \cite[Section~II.7]{Sch74} becomes a Banach lattice when endowed with the \emph{gauge norm}
\begin{equation}\label{eq:min-gauge}
\|f\|_\rho:=\inf \{c\ge 0:|f|\le c\rho\}.
\end{equation}
The easiest possible -- and perhaps canonical -- example is that of Lebesgue spaces over a finite measure space $X$. Then $E=L^p(X)$ is for each $p\in [1,\infty]$ a Banach lattice (with respect to the pointwise defined order relation between functions); upon taking $\rho=\mathbf{1}$, one e.g.\ finds $E_\rho=L^\infty(X)$.

More generally, because $E_\rho$ is in fact an AM space -- indeed, it is isometrically Banach lattice isomorphic to $C(K;\R)$ for some compact Hausdorff space $K$ (with $\rho\simeq \mathbf{1}_K$) -- the infimum in~\eqref{eq:min-gauge} is attained, i.e., $|f|\le \|f\|_\rho \rho$ for all $f\in E_\rho$. 
We will also make use of ideas from the theory of \textit{eventually positive $C_0$-semigroups}, see~\cite{Glu22} for a gentle introduction.

{
\section{Eigenvector bounds by landscape functions: The nonlinear case}\label{sec:torsland-nonl}

The proof of \autoref{thm:filmay-very-abs} relies solely on the homogeneity of $A^{-1}$, rather than on the full linearity of $A^{-1}$. We will elaborate on this idea in this section, often making good use of the properties of (nonlinear) maximal monotone operators:
two standard references are \cite{Bre73} for the general theory of such operators, and \cite{BenCra91,CipGri03} for their interplay with the theory of Banach lattices.

Given some $p\ge 1$, a vector space $H$ and a (possibly nonlinear and multi-valued) operator $A$ on $H$,
we follow~\cite{BunBur20} and say that $A$ is \emph{$(p-1)$-homogeneous} if its domain $D(A)$ is a cone and
\begin{equation}\label{eq:homogen}
A(cf)=\sign c |c|^{p-1}Af\qquad \hbox{for all }f\in D(A)\hbox{ and all }c\ne 0.
\end{equation}
A direct computation yields the following.

\begin{lemma}\label{lem:homog-inv}
Let $A$ be a $(p-1)$-homogeneous operator on a vector space $H$, for some $p\ge 1$.
Then its inverse -- if it exists -- is $(p-1)^{-1}$-homogeneous, i.e., it satisfies
\begin{equation}\label{eq:homogen-inv}
A^{-1}(cf)=\sign c |c|^\frac{1}{p-1}A^{-1}f\qquad \hbox{for all }f\in H\hbox{ and all }c\ne 0.
\end{equation}
\end{lemma}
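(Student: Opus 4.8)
The plan is to verify~\eqref{eq:homogen-inv} directly from the defining relation~\eqref{eq:homogen}, arguing at the level of graphs so that the possible multi-valuedness of $A$ and of $A^{-1}$ is harmless. Throughout I read ``$g\in A^{-1}f$'' as synonymous with ``$f\in Ag$'', and I work in the range $p>1$, in which the exponents $p-1$ and $\tfrac{1}{p-1}$ are both positive and~\eqref{eq:homogen-inv} is meaningful. I would fix $f$ in the range of $A$ (equivalently, $f\in D(A^{-1})$) and a scalar $c\neq 0$, and aim at the set identity $A^{-1}(cf)=\sign c\,|c|^{\frac{1}{p-1}}A^{-1}f$.

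The one arithmetic observation I would isolate first is that the number $d:=\sign c\,|c|^{\frac{1}{p-1}}$ satisfies $d\neq 0$, $\sign d=\sign c$ (because $|c|^{\frac{1}{p-1}}>0$), and hence $\sign d\,|d|^{p-1}=\sign c\,|c|=c$; that is, $d$ is precisely the scalar for which the homogeneity factor $\sign d\,|d|^{p-1}$ appearing in~\eqref{eq:homogen} equals $c$. Granting this, the inclusion $\sign c\,|c|^{\frac{1}{p-1}}A^{-1}f\subseteq A^{-1}(cf)$ is immediate: if $g\in A^{-1}f$, i.e.\ $f\in Ag$, then applying~\eqref{eq:homogen} to $g$ with the scalar $d$ gives $A(dg)=\sign d\,|d|^{p-1}Ag=c\,Ag\ni cf$, so $dg\in A^{-1}(cf)$. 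For the reverse inclusion I would apply what has just been shown with $c$ and $f$ replaced by $c^{-1}$ and $cf$, obtaining $\sign(c^{-1})\,|c^{-1}|^{\frac{1}{p-1}}A^{-1}(cf)\subseteq A^{-1}f$, and then multiply through by $d$, using that $d\cdot\sign(c^{-1})\,|c^{-1}|^{\frac{1}{p-1}}=1$, to conclude $A^{-1}(cf)\subseteq\sign c\,|c|^{\frac{1}{p-1}}A^{-1}f$. The two inclusions together yield~\eqref{eq:homogen-inv}; when $A^{-1}$ happens to be single-valued the first inclusion already is the claim. Along the way one also sees that $D(A^{-1})=\operatorname{Ran}(A)$ is again a cone, consistently with what~\eqref{eq:homogen-inv} presupposes.

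I do not expect a genuine obstacle here: the statement is a bookkeeping identity with signs and exponents, and the only points that deserve attention are (i) keeping the multi-valued reading consistent, which the graph formulation above takes care of, and (ii) the implicit restriction $p>1$ — for $p=1$ the operator $A$ is merely odd and the inverse homogeneity degree $\tfrac{1}{p-1}$ is not defined, so that degenerate case is simply outside the scope of the statement.
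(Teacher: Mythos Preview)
Your argument is correct and is precisely the ``direct computation'' the paper alludes to without spelling out; the paper gives no further details. Your observation that the case $p=1$ must be excluded for~\eqref{eq:homogen-inv} to make sense is accurate and is a point the paper's statement glosses over.
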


\begin{exa}
(1) Linear operators are $1$-homogeneous, and so is for all $p\in[1,\infty)$, the \textit{normalized $p$-Laplacian} $\Delta^{\mathrm{N}}_p$, see~\cite{Kaw20} and references therein, defined by
\[
\Delta_p^{\mathrm{N}}:f\mapsto \frac{1}{p}|\nabla f|^{2-p}\nabla\cdot (|\nabla f|^{p-2}\nabla f)
\]
as well as 
\[
f\mapsto \frac{1}{p}|f |^{2-p}\nabla \cdot (|\nabla f|^{p-2}\nabla f).
\]

(2) A subdifferential $\partial\mathcal E$ of  a convex, proper, lower-semicontinuous functional $\mathcal E$ on a Hilbert space $H$ (see~\cite[Exemples~2.1.4 and 2.3.4]{Bre73}) 
is a $(p-1)$-homogeneous operator whenever
$\mathcal E$ is absolutely $p$-homogeneous, i.e., provided
\[
\mathcal E(cf)=
\begin{cases}
|c|^p\mathcal E(f),\qquad &\hbox{for all }f\in H,\hbox{ if }c\ne 0,\\
0,&\hbox{if }c=0,
\end{cases}
\]
for any $p\in [1,\infty)$.
Thus, examples of absolutely $(p-1)$-homogeneous operators are given by the (standard) $p$-Laplacians on open bounded domains of $\R^d$ (in particular, with homogeneous Dirichlet or Neumann conditions, but also with suitable Robin-type conditions~\cite{Le06}), on combinatorial \cite{Mug13} or metric graphs \cite{DelRos16};  by Finsler $p$-Laplacians \cite{FerKaw09}; by non-local $p$-Laplacians \cite[Chapter~6]{AndMazRos10}; by $p$-Dirichlet-to-Neumann operators~\cite{ChiHauKen15}; by fractional $p$-Laplacians~\cite{DelGomVaz21}; or by subdifferential of general $p$-Cheeger energies on general metric measure spaces recently introduced in~\cite{GorMaz22}. Operators associated with porous medium equations in $H^{-1}(\Omega)$
are $(p-1)$-homogeneous, too, for suitable $p$.
\end{exa}

Recall that a single-valued operator $T$ on a vector lattice is said to be \textit{order preserving} if
\[
Tf\le Tg\qquad \hbox{for all }f,g\in E\hbox{ with }f\le g;
\]
and to \textit{dominate} another single-valued operator $S$ on $E$ if
\[
|Sf|\le T|f| \qquad\hbox{ for all }f\in E.
\]

The following property is well-known in the case of linear operators, see the proof of~\cite[Proposition~2.20]{Ouh05}. 

\begin{lemma}\label{lem:positi-charac}
Let $E$ be a vector lattice and let $A$ be a (possibly multi-valued) invertible $(p-1)$-homogeneous operator for some $p\ge 1$. If $A^{-1}$ is single-valued and order preserving, then it dominates itself.
In particular, $A^{-1}f\ge 0$ for all $f\ge 0$.
\end{lemma}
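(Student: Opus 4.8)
The plan is to reproduce the classical argument for positive linear resolvents (as in the proof of~\cite[Proposition~2.20]{Ouh05}), with the linearity of $A^{-1}$ replaced by the odd homogeneity recorded in \autoref{lem:homog-inv}. Concretely, the only two facts I would use are that $A^{-1}$ is order preserving and that $A^{-1}(-f)=-A^{-1}f$ for every $f\in E$; the latter is exactly~\eqref{eq:homogen-inv} evaluated at $c=-1$, since $\sign(-1)\,|-1|^{\frac{1}{p-1}}=-1$ (the exponent playing no role when $p=1$, as $|-1|=1$).

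First I would fix $f\in E$ and start from the lattice inequalities $f\le|f|$ and $-f\le|f|$. Applying the order-preserving map $A^{-1}$ to each gives
\[
A^{-1}f\le A^{-1}|f|\qquad\text{and}\qquad A^{-1}(-f)\le A^{-1}|f|.
\]
Rewriting the second inequality by homogeneity as $-A^{-1}f\le A^{-1}|f|$, i.e.\ $-A^{-1}|f|\le A^{-1}f$, and combining with the first, I obtain
\[
-A^{-1}|f|\le A^{-1}f\le A^{-1}|f|,
\]
which is precisely $|A^{-1}f|\le A^{-1}|f|$; that is, $A^{-1}$ dominates itself.

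For the displayed ``in particular'', I would simply take $f\ge 0$, so that $|f|=f$ and the domination inequality becomes $|A^{-1}f|\le A^{-1}f$; since $A^{-1}f\le|A^{-1}f|$ always holds in a vector lattice, this forces $A^{-1}f=|A^{-1}f|\ge 0$.

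I do not expect a genuine obstacle: once \autoref{lem:homog-inv} is available the proof is a two-line manipulation. The only points that merit a word of care are bookkeeping ones — that the hypotheses ($A^{-1}$ single-valued and, implicitly, everywhere defined on $E$) ensure $A^{-1}f$, $A^{-1}(-f)$ and $A^{-1}|f|$ all make sense, and that~\eqref{eq:homogen-inv} is invoked only with $c=-1$, so that the exponent $\tfrac{1}{p-1}$ causes no trouble at $p=1$.
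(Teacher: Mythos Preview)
Your argument is correct and essentially identical to the paper's: both start from the lattice inequalities $-|f|\le \pm f\le |f|$, apply the order-preserving map $A^{-1}$, and invoke the odd homogeneity $A^{-1}(-g)=-A^{-1}g$ coming from~\eqref{eq:homogen-inv} with $c=-1$ to conclude $|A^{-1}f|\le A^{-1}|f|$. The paper compresses this into a single line, while you spell out the two inequalities separately and add the (harmless) remark on $p=1$ and the derivation of the ``in particular'' clause.
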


\begin{proof}
Because $-|f|\le \pm f\le |f|$, we deduce from~\eqref{eq:homogen-inv} that $|\pm A^{-1}f|=|A^{-1}(\pm f)|\le A^{-1}|f|$.
\end{proof}

If, for some $p\ge 1$, $A$ is a (possibly multi-valued) $(p-1)$-homogeneous operator with domain $D(A)$ and there exists $0\ne \varphi\in D(A)$ and $\lambda\in \R$ such that
\begin{equation}\label{eq:eigenv-incl}
A\varphi\ni \lambda|\varphi|^{p-2}\varphi,
\end{equation}
then $\varphi$ is said to be an \textit{eigenvector} of $A$ for the \textit{eigenvalue} $\lambda$ or, shortly, we call $(\lambda,\varphi)$ an \textit{eigenpair} of $A$. 

\begin{prop}\label{prop:landscape-abstr-nonlin}
Let $E$ be a real Banach lattice, and let $\rho\in E_+$.
Let $p\ge 1$, and let $A,B$ be (possibly multi-valued) maximal monotone, $(p-1)$-homogeneous  and invertible operators on $E$, and such that $B^{-1}$ is order preserving and dominates $A^{-1}$.
Then for any eigenpair $(\lambda,\varphi)$ of $A$ such that $\varphi\in E_\rho$ there holds
\begin{equation}\label{eq:landscape-superabstr-nonl-pos}
|\varphi|\le  
|\lambda|^\frac{1}{p-1} 
\left\|{\varphi}\right\|_\rho B^{-1}\rho.
\end{equation}
If $A$ has order preserving inverse, then we can take $A=B$ in \eqref{eq:landscape-superabstr-nonl-pos}.
\end{prop}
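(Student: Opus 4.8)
The plan is to run, once more, the one-line argument behind \autoref{thm:filmay-very-abs} and \eqref{eq:musterfilmay}, with the $\tfrac{1}{p-1}$-homogeneity of the inverses supplied by \autoref{lem:homog-inv} now taking over the role that the linearity of $A^{-1}$ played there. Before that I would record two harmless reductions. First, since the notion of domination is only meaningful for single-valued operators, the hypothesis forces $A^{-1}$ and $B^{-1}$ to be single-valued (and, as both $A$ and $B$ are called invertible, everywhere defined on $E$). Second, $\lambda\neq 0$: indeed $\varphi\neq 0$ by definition of an eigenpair, while \eqref{eq:homogen-inv} applied with $f=0$ and, say, $c=2$ gives $A^{-1}0=0$, so that $\lambda=0$ in \eqref{eq:eigenv-incl} would yield $\varphi=A^{-1}0=0$.

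For the core estimate, rewrite the eigenvalue inclusion \eqref{eq:eigenv-incl} as $\varphi=A^{-1}\bigl(\lambda|\varphi|^{p-2}\varphi\bigr)$, take moduli, and apply in turn: domination of $A^{-1}$ by $B^{-1}$; the identity $\bigl||\varphi|^{p-2}\varphi\bigr|=|\varphi|^{p-1}$ valid in the homogeneous functional calculus of $E$; and the $\tfrac{1}{p-1}$-homogeneity of $B^{-1}$ (with the scalar $c=|\lambda|>0$) to pull out $|\lambda|$. This produces
\[
|\varphi|=\bigl|A^{-1}(\lambda|\varphi|^{p-2}\varphi)\bigr|\le B^{-1}\bigl(|\lambda|\,|\varphi|^{p-1}\bigr)=|\lambda|^{\frac{1}{p-1}}\,B^{-1}\bigl(|\varphi|^{p-1}\bigr).
\]
Next I would bring in $\varphi\in E_\rho$. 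Since the infimum in \eqref{eq:min-gauge} is attained we have $|\varphi|\le\|\varphi\|_\rho\,\rho$; reading this inside the AM-space $E_\rho\cong C(K)$ and using that $t\mapsto t^{p-1}$ is order preserving on $[0,\infty)$ while $\rho$ is the order unit $\mathbf{1}_K$ of $E_\rho$ (so that $\rho^{p-1}=\rho$ there, the functional calculus being unaffected by the ideal embedding $E_\rho\hookrightarrow E$), we obtain $|\varphi|^{p-1}\le\|\varphi\|_\rho^{p-1}\,\rho$. Applying the order-preserving, $\tfrac{1}{p-1}$-homogeneous operator $B^{-1}$ and using homogeneity once more gives $B^{-1}(|\varphi|^{p-1})\le\|\varphi\|_\rho\,B^{-1}\rho$, and plugging this into the display yields \eqref{eq:landscape-superabstr-nonl-pos}. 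The final sentence is then immediate from \autoref{lem:positi-charac}: if $A^{-1}$ is single-valued and order preserving it dominates itself, so $B:=A$ is admissible.

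The step I expect to require the most care is the bookkeeping in the homogeneous functional calculus of $E$: that $\bigl||\varphi|^{p-2}\varphi\bigr|=|\varphi|^{p-1}$, that $0\le f\le g$ implies $f^{p-1}\le g^{p-1}$, and above all that the power $\rho^{p-1}$ appearing after raising $|\varphi|\le\|\varphi\|_\rho\rho$ to the $(p-1)$-st power collapses back to $\rho$. That collapse is not because $\rho$ is idempotent in $E$, but because $\rho$ is the unit of its own principal ideal $E_\rho$; getting this point right is exactly what makes the right-hand side of \eqref{eq:landscape-superabstr-nonl-pos} come out as $B^{-1}\rho$ rather than as $B^{-1}(\rho^{p-1})$. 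Apart from that, the argument is the same modulus--monotonicity--homogeneity manipulation as in \eqref{eq:musterfilmay}, now in its $(p-1)$-homogeneous incarnation.
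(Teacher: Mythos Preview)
Your argument is correct and follows essentially the same route as the paper's proof: apply $A^{-1}$ to the eigenvalue inclusion, use domination by $B^{-1}$ together with the $\tfrac{1}{p-1}$-homogeneity from \autoref{lem:homog-inv}, then estimate $|\varphi|^{p-1}$ against $\|\varphi\|_\rho^{p-1}\rho$ via the AM-space structure of $E_\rho$, and finally invoke \autoref{lem:positi-charac} for the last assertion. Your treatment is in fact more explicit than the paper's one-line version about the functional-calculus step---the paper simply records the AM-space inequality $\||\varphi|^{p-1}\|_\rho\le\|\varphi\|_\rho^{p-1}$, whereas you spell out that $\rho$ corresponds to $\mathbf{1}_K$ under $E_\rho\cong C(K)$ so that $\rho^{p-1}=\rho$---but the underlying mechanism is the same.
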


\begin{proof}
Applying the single-valued operator $A^{-1}$
 to both sides of~\eqref{eq:eigenv-incl} yields 
\[
|\varphi|=\left| A^{-1}(\lambda|\varphi|^{p-2}\varphi)\right|\\
\le
B^{-1}(|\lambda| |\varphi|^{p-1}).
\]
Now we apply \autoref{lem:homog-inv} and proceed like in the proof of  \autoref{thm:filmay-very-abs} to deduce the claim, using the fact that $E_\rho$ is an $AM$-space and, hence, $\| |\varphi|^{p-1}\|_\rho\le \| \varphi\|_\rho^{p-1}$.
The second assertion follows from \autoref{lem:positi-charac}.
\end{proof}

{
Inspired by the general setting proposed in~\cite{Ota84}, we may also consider eigenpairs of $(p-1)$-homogeneous operators with respect to another (single-valued!) $(p-1)$-homogeneous operator $\Psi$: by this we mean that
\[
A\varphi\ni \lambda \Psi\varphi.
\]
The case discussed in the first part of this section corresponds to the choice
\[
\Psi\varphi:=|\varphi|^{p-2}\varphi
\]
but if $(X;\mu)$ is a finite measure space and $E=L^q(X;\mu)$ we may also let
\[
\Psi\varphi:=\|\varphi\|^{p-2}_2 \varphi
\]
or, more generally,
\[
\Psi\varphi:=\|\varphi\|^{p-q}_q |\varphi|^{q-2} \varphi,\qquad 1\le q<\infty.
\]

We can hence obtain a generalization of \autoref{prop:landscape-abstr-nonlin}: the proof is identical and we omit it.

\begin{cor}
Let $E$ be a real Banach lattice, and let $\rho\in E_+$.
Let $p\ge 1$, and let $A,B$ be (possibly multi-valued) maximal monotone, $(p-1)$-homogeneous  and invertible operators on $E$, and such that $B^{-1}$ is order preserving and dominates $A^{-1}$.
Then for any eigenpair $(\lambda,\varphi)$ of $A$ with respect to a $(p-1)$-homogeneous operator $\Psi$ on $E$ we have
\[
|\varphi|\le |\lambda|^\frac{1}{p-1} \|\Psi \varphi\|_\rho^\frac{1}{p-1} B^{-1} \rho.
\]
\end{cor}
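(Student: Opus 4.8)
The plan is to mimic, almost verbatim, the proof of \autoref{prop:landscape-abstr-nonlin}; the only new feature is that the right-hand side of the eigenvalue inclusion is now the single-valued element $\lambda\Psi\varphi$ rather than $\lambda|\varphi|^{p-2}\varphi$, which changes nothing structurally since $\Psi$ is assumed single-valued. As in that proposition we implicitly assume $\Psi\varphi\in E_\rho$, so that $\|\Psi\varphi\|_\rho$ is finite and the asserted bound is meaningful. I would first dispose of the degenerate case $\lambda=0$: since $A$ is invertible and, being maximal monotone and $(p-1)$-homogeneous, satisfies $0\in A0$, the inclusion $A\varphi\ni 0$ would force $\varphi=0$, contradicting that $\varphi$ is an eigenvector; hence $|\lambda|>0$.

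Next, applying the single-valued operator $A^{-1}$ to both sides of $A\varphi\ni\lambda\Psi\varphi$ gives $\varphi=A^{-1}(\lambda\Psi\varphi)$. Passing to moduli and using that $B^{-1}$ dominates $A^{-1}$ yields
\[
|\varphi|=\bigl|A^{-1}(\lambda\Psi\varphi)\bigr|\le B^{-1}\bigl(|\lambda|\,|\Psi\varphi|\bigr).
\]
Because $E_\rho$ is an $AM$-space the infimum in the gauge norm is attained, so $|\Psi\varphi|\le\|\Psi\varphi\|_\rho\,\rho$; the operator $B^{-1}$ being order preserving, this gives $B^{-1}\bigl(|\lambda|\,|\Psi\varphi|\bigr)\le B^{-1}\bigl(|\lambda|\,\|\Psi\varphi\|_\rho\,\rho\bigr)$. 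Finally, \autoref{lem:homog-inv} applied with the nonnegative scalar $c:=|\lambda|\,\|\Psi\varphi\|_\rho$ gives $B^{-1}(c\rho)=c^{1/(p-1)}B^{-1}\rho=|\lambda|^{1/(p-1)}\|\Psi\varphi\|_\rho^{1/(p-1)}B^{-1}\rho$, which is exactly the claimed estimate. If $A^{-1}$ is itself order preserving, then \autoref{lem:positi-charac} shows it dominates itself, so one may take $B=A$.

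I do not expect a genuine obstacle here: the argument is purely formal once the nonlinear homogeneity calculus of \autoref{lem:homog-inv} and \autoref{lem:positi-charac} is in hand. The only points that require a little care are the same ones already present in \autoref{thm:filmay-very-abs} and \autoref{prop:landscape-abstr-nonlin}: one must work with $|\Psi\varphi|$ rather than with $\Psi\varphi$ itself (which need not lie in $E_+$), together with the identity $|\lambda\Psi\varphi|=|\lambda|\,|\Psi\varphi|$; one must invoke the $AM$-structure of $E_\rho$ to guarantee that the gauge-norm infimum is attained; and one must note the harmless degeneration of the exponent $1/(p-1)$ at $p=1$, where it is to be read as a limiting case, exactly as in the preceding proposition.
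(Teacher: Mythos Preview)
Your proof is correct and follows exactly the approach the paper intends: the paper explicitly omits the proof, stating it ``is identical'' to that of \autoref{prop:landscape-abstr-nonlin}, and your argument reproduces that proof with $\lambda|\varphi|^{p-2}\varphi$ replaced by $\lambda\Psi\varphi$ throughout. Your additional remarks on the degenerate case $\lambda=0$ and the implicit assumption $\Psi\varphi\in E_\rho$ are sound housekeeping that the paper leaves tacit.
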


}

\section{Eigenvector bounds by landscape functions: The linear case}\label{sec:torsland-lin}

\subsection{Estimates by order properties}\label{sec:orderprop}
For the sake of later reference, let us first state a more general version of \autoref{thm:filmay-very-abs} that applies to operators that are not invertible: its proof is an obvious modification of~\eqref{eq:musterfilmay}.

\begin{theo}\label{thm:filmay-very-abs-lattice}
Let $E$ be a Banach lattice, and let $\rho\in E_+$. Let $A$ be a closed linear operator on a Banach lattice $E$ such that $\mu+A$ is invertible and $(\mu+A)^{-1}$ is dominated by some bounded linear operator $(\mu+A)^{(-1)}_+$. If $(\lambda,\varphi)$ is an eigenpair of $A$ such that $\varphi\in E_\rho$, then
\begin{equation}\label{eq:basic-fm-superabstr-lattice}
|\varphi|\le |\mu+\lambda| \|\varphi\|_\rho (\mu+A)^{(-1)}_+ \rho.
\end{equation}
Clearly, we may take $(\mu+A)^{(-1)}_+:=(\mu+A)^{-1}$ if $\mu+A$ has positive inverse.
\end{theo}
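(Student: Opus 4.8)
The plan is to reduce the claim to the one-line computation already used in the proof of \autoref{thm:filmay-very-abs}, only now carried out inside the Banach lattice $E$ rather than in a concrete $L^2$-space, and with the shift $\mu$ absorbed into the eigenvalue equation. First I would observe that if $(\lambda,\varphi)$ is an eigenpair of $A$, then $A\varphi=\lambda\varphi$, hence $(\mu+A)\varphi=(\mu+\lambda)\varphi$; since $\mu+A$ is invertible, this rewrites as $\varphi=(\mu+\lambda)(\mu+A)^{-1}\varphi$. This is the exact analogue of the identity $\varphi=\lambda A^{-1}\varphi$ used in~\eqref{eq:musterfilmay}, and it is the only place the hypothesis on $\mu$ enters.

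Next I would take absolute values and chain the two monotonicity facts. Since $(\mu+A)^{-1}$ is dominated by the positive operator $(\mu+A)^{(-1)}_+$, we get $|\varphi| = |\mu+\lambda|\,\bigl|(\mu+A)^{-1}\varphi\bigr| \le |\mu+\lambda|\,(\mu+A)^{(-1)}_+|\varphi|$. Then I would invoke that $\varphi\in E_\rho$: because $E_\rho$ is an AM-space the infimum defining the gauge norm is attained, so $|\varphi|\le\|\varphi\|_\rho\,\rho$, and applying the positive (hence order-preserving) bounded operator $(\mu+A)^{(-1)}_+$ to this inequality yields $(\mu+A)^{(-1)}_+|\varphi|\le\|\varphi\|_\rho\,(\mu+A)^{(-1)}_+\rho$. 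Combining the last two displays gives exactly~\eqref{eq:basic-fm-superabstr-lattice}. The final sentence of the statement is immediate: if $\mu+A$ has positive inverse then $(\mu+A)^{-1}$ dominates itself (by the linear special case of \autoref{lem:positi-charac}, or simply because $|\,(\mu+A)^{-1}f\,|\le(\mu+A)^{-1}|f|$ for positive operators), so one may take $(\mu+A)^{(-1)}_+:=(\mu+A)^{-1}$.

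Strictly speaking there is nothing hard here — the proof is, as the text promises, an obvious modification of~\eqref{eq:musterfilmay}. The only points that require a sentence of care rather than a symbol are: checking that $\varphi$, being an eigenvector and lying in $E_\rho$, legitimately sits in the domain of $\mu+A$ and that the manipulation $(\mu+A)\varphi=(\mu+\lambda)\varphi$ is valid (it is, since $D(\mu+A)=D(A)$ and $\varphi\in D(A)$ by definition of eigenpair); and making sure the domination inequality is read in the correct direction, i.e. that $(\mu+A)^{(-1)}_+$ being the \emph{dominating} operator is what lets us pass from $\bigl|(\mu+A)^{-1}\varphi\bigr|$ to $(\mu+A)^{(-1)}_+|\varphi|$. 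I do not anticipate a genuine obstacle; the whole content is organizational, namely noticing that boundedness of $(\mu+A)^{(-1)}_+$ (guaranteeing it is order preserving on all of $E$, not just on a sublattice) together with the AM-space structure of $E_\rho$ is exactly what makes the three-step chain close.
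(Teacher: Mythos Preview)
Your proposal is correct and follows exactly the approach the paper indicates: the paper does not give a separate proof of \autoref{thm:filmay-very-abs-lattice} but merely remarks that it is ``an obvious modification of~\eqref{eq:musterfilmay}'', and your three-step chain (rewrite $\varphi=(\mu+\lambda)(\mu+A)^{-1}\varphi$, use domination, then use $|\varphi|\le\|\varphi\|_\rho\,\rho$ and positivity of $(\mu+A)^{(-1)}_+$) is precisely that modification. Your side remarks on why the chain closes are accurate and add nothing beyond what is already implicit in the paper's setup.
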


\begin{rem}
{
There is a curious counterpart of the concept of landscape function in the classical Birman--Schwinger theory: Given a potential $0\ge V\in L^\infty(\R^d)$, a simple manipulation shows that $(\lambda,\varphi)$ is an eigenpair for the Schrödinger operator $-\Delta+V$
if and only if 
\begin{equation}\label{eq:basic-bs}
(\lambda+\Delta)^{-1} V\varphi=
 \varphi,
\end{equation}
i.e., if and only if $(1,V^\frac12 \varphi)$ is an eigenpair for the operator $V^\frac12 (\lambda+\Delta)^{-1}V^\frac12 $, where formally $V^\frac{1}{2}:=\sgn V |V|^\frac{1}{2}$. Birman and Schwinger, and many authors after them elaborated on this observation to find spectral correspondences between $-\Delta+V$ and $V^\frac12 (\lambda+\Delta)^{-1}V^\frac12 $: see, e.g.,~\cite[Chapter~4]{LieSei10} for applications in modern mathematical physics. 

However, inspired by \eqref{eq:basic-bs} we can also find a new landscape function, 
hence deduce a new pointwise eigenvector bound: 
If $\mu> s(\Delta)$, then by the maximum principle $(\mu-\Delta)^{-1}$ is a positive operator and \eqref{eq:basic-bs} yields 
\begin{equation}\label{eq:basic-eigenestim-bs}
|\varphi| \le \|\varphi\|_\infty(\mu-\Delta)^{-1}|\lambda+\mu-V|\qquad\hbox{for all }\mu>s(\Delta),
\end{equation}
to be compared with
\begin{equation}\label{eq:basic-eigenestim-fm}
|\varphi|\le |\mu+\lambda| \|\varphi\|_\infty (\mu-\Delta+V)^{-1} \mathbf{1}\qquad\hbox{for all }\mu>s(\Delta-V),
\end{equation}
from \autoref{thm:filmay-very-abs-lattice}:
the relative size of $\lambda,V$ will decide which estimate is sharper.

(It should be appreciated that, in view of the Kato--Simon diamagnetic inequality, one also has the domination property $|(\mu+\Delta_a)^{-1}\psi|\le (\mu+\Delta)^{-1}|\psi|$ for any magnetic Laplacian $\Delta_a:=(i\nabla +a)^2$ (under very mild integrability assumptions on $a,V$, see~\cite{HunSim04,MelOuhRoz04}) and, hence, \eqref{eq:basic-eigenestim-bs} and \eqref{eq:basic-eigenestim-fm} also hold for all eigenpairs $(\lambda,\varphi)$ of 
$-\Delta_a+V$. We will elaborate on this in Section~\ref{sec:magn-schr-stein}.)
}
\end{rem}

Let us now present two further classes of landscape-type functions for possibly non-invertible operators. The first one is related to the  \textit{anti-maximum principle}, which has been studied for numerous linear and nonlinear operators since~\cite{ClePel79}:
roughly speaking, validity of a maximum/anti-maximum principle results in a sudden ``change of sign'' (from order preservation to order reversal) of an operator's resolvents 
 across the bottom of its spectrum. The second one replaces the order properties of resolvents by those of $C_0$-semigroups: we refer to~\cite{DanGluKen16b} for the terminology. Here and in the following, we denote by $s(T):=\sup\{\real \lambda: \lambda\in \sigma(T)\}$ the \textit{spectral bound}  of an operator $T$.

{

\begin{prop}\label{prop:landscape-abstr}
Let $A$ be a closed linear operator on a complex Banach lattice $E$, and let $\rho\in E_+$. 

Then the following assertions hold for each eigenpair $(\lambda,\varphi)$ of $A$ such that $\varphi\in E_\rho$.

\begin{enumerate}[(1)]

\item\label{item:amti}
Let $s(-A)\in \R$ be an eigenvalue and a pole of the resolvent $(\cdot+A)^{-1}$, and let the corresponding eigenprojector $P_*$ be \emph{strongly positive with respect to $\rho$}.
If $A$ is densely defined, then  both
\begin{equation}\label{eq:landscape-superabstr-pos-antim}
|\varphi|\le  |s(-A)+\varepsilon+\lambda|\|\varphi\|_\rho  (s(-A)+\varepsilon+A)^{-1}\rho
\end{equation}
and
\begin{equation}\label{eq:landscape-superabstr-neg}
|\varphi|\le  -|s(-A)-\varepsilon+\lambda|\|\varphi\|_\rho  (s(-A)-\varepsilon+	A)^{-1}\rho
\end{equation}
hold for all $\varepsilon>0$ sufficiently small.

\item\label{item:test} If $-A$ generates a  $C_0$-semigroup $(\e^{-t A})_{t\ge 0}$ that is \emph{individually eventually strongly positive} with respect to $\rho$, then  there is $t_0>0$ such that
\begin{equation}\label{eq:landscape-steiner}
|\varphi|\le \e^{t\real \lambda}\|\varphi\|_\rho  \e^{-tA} \rho\qquad\hbox{ for all }t\ge t_0.
\end{equation}
\end{enumerate}
\end{prop}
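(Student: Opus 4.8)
The plan is to run the one-line Filoche--Mayboroda--van den Berg argument, exactly as in \eqref{eq:musterfilmay}, but with the Green operator $A^{-1}$ replaced by a resolvent or by a semigroup operator that is (eventually, strongly) positive. The underlying identity we exploit is that if $A\varphi=\lambda\varphi$, then $\varphi$ is also an eigenvector of $(\mu+A)^{-1}$ for the eigenvalue $(\mu+\lambda)^{-1}$, and of $\e^{-tA}$ for the eigenvalue $\e^{-t\lambda}$ --- in both cases provided these operators make sense and $\varphi$ lies in the appropriate domain/space. The subtlety over \autoref{thm:filmay-very-abs} is that the relevant operators are only \emph{eventually} positive or only \emph{strongly} positive \emph{with respect to $\rho$}, so the bound $|T\psi|\le T|\psi|$ needs to be extracted from the eventual-positivity hypotheses rather than assumed outright.

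\medskip
For part \eqref{item:amti}, the classical maximum/anti-maximum principle near a pole of the resolvent (as developed, e.g., in \cite{DanGluKen16b}) tells us that, since $s(-A)$ is a pole of $(\cdot+A)^{-1}$ whose eigenprojector $P_*$ is strongly positive with respect to $\rho$, there is $\varepsilon_0>0$ such that for $0<\varepsilon<\varepsilon_0$ the operator $(s(-A)+\varepsilon+A)^{-1}$ is positive (maximum principle on the right of the spectral bound) while $(s(-A)-\varepsilon+A)^{-1}$ is negative (anti-maximum principle on the left), at least when applied to $\rho$ — and, more to the point, each is dominated in modulus by a positive operator on the principal ideal $E_\rho$. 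Granting this, one writes $\varphi=(s(-A)+\varepsilon+\lambda)(s(-A)+\varepsilon+A)^{-1}\varphi$, takes moduli, and applies $|(s(-A)+\varepsilon+A)^{-1}\psi|\le (s(-A)+\varepsilon+A)^{-1}|\psi|$ together with $|\varphi|\le\|\varphi\|_\rho\rho$ (valid because $E_\rho$ is an $AM$-space, so the infimum in \eqref{eq:min-gauge} is attained). This gives \eqref{eq:landscape-superabstr-pos-antim}. For \eqref{eq:landscape-superabstr-neg} one does the same with $s(-A)-\varepsilon$, using that $-(s(-A)-\varepsilon+A)^{-1}$ is the positive dominating operator, which is why the minus sign appears; here one must be slightly careful that $s(-A)-\varepsilon$ still lies in the resolvent set of $-A$ for small $\varepsilon>0$, which holds because $s(-A)$ is an isolated point of the spectrum (being a pole). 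Density of $D(A)$ is what guarantees the resolvent behaves as expected and that the decomposition $E=P_*E\oplus(I-P_*)E$ interacts correctly with the eventual-positivity machinery.

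\medskip
For part \eqref{item:test}, the hypothesis is that $(\e^{-tA})_{t\ge0}$ is individually eventually strongly positive with respect to $\rho$: by definition this provides, for the fixed vector $\rho$, a time $t_0>0$ such that $\e^{-tA}\rho\ge0$ (indeed strongly positive with respect to $\rho$) for all $t\ge t_0$. What we actually need, however, is the domination inequality $|\e^{-tA}\psi|\le \e^{-tA}|\psi|$ for $\psi$ in the ideal $E_\rho$ and $t\ge t_0$; this is the point that requires a small argument rather than a direct quotation, since eventual positivity of $\e^{-tA}\rho$ alone is weaker than eventual positivity of the operator. The clean way is to note $|\varphi|\le\|\varphi\|_\rho\rho$, apply $\e^{-tA}$ to this inequality — which is legitimate for $t\ge t_0$ because $\e^{-tA}$ is positive on the (closed, $\e^{-tA}$-invariant for large $t$) ideal $E_\rho$ by the eventual-positivity hypothesis — and combine with $\e^{-tA}\varphi=\e^{-t\lambda}\varphi$, whence $|\varphi|=|\e^{-t\lambda}||\e^{-tA}\varphi|\le \e^{t\real\lambda}\e^{-tA}(\|\varphi\|_\rho\rho)$. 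Absolute values of complex eigenvalues: $|\e^{-t\lambda}|=\e^{-t\real\lambda}$ — here a sign must be watched, since $\e^{-tA}\varphi=\e^{-t\lambda}\varphi$ gives $|\varphi|=\e^{t\real\lambda}|\e^{-tA}\varphi|$ wait — more carefully, $|\e^{-tA}\varphi|=|\e^{-t\lambda}|\,|\varphi|=\e^{-t\real\lambda}|\varphi|$, so $|\varphi|=\e^{t\real\lambda}|\e^{-tA}\varphi|\le \e^{t\real\lambda}\e^{-tA}|\varphi|\le\e^{t\real\lambda}\|\varphi\|_\rho\,\e^{-tA}\rho$, which is \eqref{eq:landscape-steiner}.

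\medskip
The main obstacle, in both parts, is not the algebra but making precise the step ``eventual/strong positivity of the operator on a specified vector upgrades to a domination inequality $|T\psi|\le T|\psi|$ on the principal ideal $E_\rho$.'' In the strictly positive resolvent/semigroup case this is immediate; in the eventually-positive case one leans on the fact that $E_\rho$ is an $AM$-space isometrically isomorphic to some $C(K;\R)$ with $\rho\simeq\mathbf 1_K$, so that $-\|\psi\|_\rho\rho\le\psi\le\|\psi\|_\rho\rho$ reduces the needed inequality to positivity of $T$ applied to the two extreme points $\pm\rho$ — and that is exactly what ``(individually) eventually strongly positive with respect to $\rho$'' supplies for $t\ge t_0$ (resp.\ for the resolvents at $s(-A)\pm\varepsilon$). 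I would cite \cite{DanGluKen16b,Glu22} for the precise form of these statements and otherwise assemble the proof from the displays above, treating the two items in parallel.
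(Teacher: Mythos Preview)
Your approach is essentially the same as the paper's: for part~\eqref{item:amti} invoke \cite[Theorem~4.4]{DanGluKen16b} to obtain strong positivity (resp.\ negativity) of the resolvent in a one-sided neighbourhood of $s(-A)$ and then run the chain \eqref{eq:musterfilmay}; for part~\eqref{item:test} use the Spectral Mapping Theorem together with individual eventual positivity. One refinement to your last paragraph: the domination step does \emph{not} reduce to positivity of $T$ on the two vectors $\pm\rho$ alone---the paper applies individual eventual positivity directly to the specific positive vector $\|\varphi\|_\rho\rho-|\varphi|$, obtaining $\e^{-tA}|\varphi|\le\|\varphi\|_\rho\,\e^{-tA}\rho$ for all $t\ge t_0$ with $t_0$ depending on that vector, and this is exactly what feeds into the chain $|\varphi|=\e^{t\real\lambda}|\e^{-tA}\varphi|\le\e^{t\real\lambda}\|\varphi\|_\rho\,\e^{-tA}\rho$.
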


This suggests that $\e^{-tA}\rho$ is a landscape-like function: we will refer to it as a \textit{parabolic landscape function}. If $-A$ is a semigroup generator, and if $(\e^{-tA})_{t\ge 0}$ is individually eventually strongly positive with respect to $\rho$, then the condition that $P$ be strongly positive with respect to $\rho$ can be reformulated in terms of smoothing properties of $(\e^{-tA})_{t\ge 0}$, see~\cite[Theorem~5.2 and Corollary~5.3]{DanGluKen16b}.

\begin{proof}

(1) 
We apply~\cite[Theorem~4.4]{DanGluKen16b} and deduce that $(\mu+A)^{-1}$ is a strongly positive (resp., strongly negative) operator for all $\mu$ in a sufficiently small (depending on $\varphi,\rho$) left (resp., right) neighbourhood of $s(-A)$.
In particular, the estimates
\begin{equation}\label{eq:landscape-superabstr-pos-sharp}
\begin{split}
|\varphi|&\le  |s(-A)+\varepsilon+\lambda |  (s(-A)+\varepsilon+A)^{-1}|\varphi|\\
&\le  |s(-A)+\varepsilon+\lambda |\|\varphi\|_\rho  (s(-A)+\varepsilon+A)^{-1}\rho-c\rho
\end{split}
\end{equation}
and
\begin{equation}\label{eq:landscape-superabstr-neg-sharp}
\begin{split}
|\varphi|&\le -|s(-A)-\varepsilon+\lambda|  (s(-A)-\varepsilon+A)^{-1}|\varphi|\\
&\le -|s(-A)-\varepsilon+\lambda|\|\varphi\|_\rho  (s(-A)-\varepsilon+A)^{-1}\rho-c\rho
\end{split}
\end{equation}
hold for some $c>0$ and all $\varepsilon$ small enough.
In either estimate, the former inequality holds because $A^{-1}$ dominates (resp., anti-dominates) itself, while the latter follows from the inequality $|\varphi|\le \|\varphi\|_\rho \rho$.

(2) By the Spectral Mapping Theorem $(\e^{-\lambda t},\varphi)$ is an eigenpair of $\e^{-tA}$.
Hence, due to individual eventual positivity of $(\e^{\real \lambda t}\e^{-tA})_{t\ge 0}$, there is $t_0$ (depending on the positive vector $
\|\varphi \|_\rho   \rho-|\varphi|$) such that
\begin{equation}\label{eq:landscape-steiner-proof}
|\varphi|=|\e^{\lambda t}\e^{-tA}\varphi|\le \|\varphi\|_\rho   \e^{t\real \lambda}\e^{-tA} \rho\qquad\hbox{for  all }t\ge t_0.
\end{equation}
This concludes the proof.
\end{proof}

It is often possible to have $|(\mu+A)^{-1}\rho|$ dominated by another -- ideally, easy to compute -- landscape function: for instance, $(\mu+B)^{-1}\rho$, where $B$ is a different, simpler operator, or perhaps a different realization of the same operator with simpler boundary conditions. Likewise, further parabolic landscape functions can be found even whenever the semigroup $(\e^{-tA})_{t\ge 0}$ is not even \textit{eventually} positive. The following \autoref{cor:landscape-abstr-domin} allows for upper estimates of eigenvectors in terms of a whole ensemble of landscape functions and, in turn, for an interesting sharpening of~\eqref{eq:basic-fm-superabstr-lattice} for eigenvectors associated with higher eigenvalues, where the right hand side in~\eqref{eq:basic-fm-superabstr-lattice} blows up, see Section~\ref{sec:toy-model} below.  This idea already appears, at least implicitly, in \cite[Appendix]{LyrMayFil15} and \cite[Theorem~III.4]{BalLiaGal20}.

\begin{prop}\label{cor:landscape-abstr-domin}

Let $E$ be a Banach lattice, and let $\rho\in E_+$.
Let $A,B$ be generators of $C_0$-semigroups on $E$ such that $(\e^{-tA})_{t\ge 0}$ is dominated by $(\e^{-tB})_{t\ge 0}$.

Then each eigenpair $(\lambda,\varphi)$ of $A$ such that $\varphi\in E_\rho$ satisfies
\begin{equation}\label{eq:landscape-superabstr-pos-domin-resolv}
|\varphi|\le \|\varphi\|_\rho \inf_{\real \mu>\max\{s(-A),s(-B)\}}|\real (\lambda+ \mu)| (\real \mu+B)^{-1}\rho
\end{equation}
and
\begin{equation}\label{eq:landscape-steiner-domin-semig}
|\varphi|\le \|\varphi\|_\rho \inf_{t>0} \e^{t\real \lambda} \e^{-tB} \rho.
\end{equation}

We can take $A=B$ if $(\e^{-tA})_{t\ge 0}$ is positive: in particular, \begin{equation}\label{eq:landscape-bootstr}
|\varphi|\le  \|\varphi\|_\rho 
\inf_{\delta>0}\left(\real \lambda+s(-A)+\delta\right) \left[\left(s(-A)+\delta+A\right)^{-1}\rho\right]\quad\hbox{and}\quad |\varphi|\le \|\varphi\|_\rho \inf_{t>0} \e^{t\real \lambda} \e^{-tA} \rho.
\end{equation}
\end{prop}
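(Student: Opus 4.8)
The plan is to derive both families of estimates in \eqref{eq:landscape-superabstr-pos-domin-resolv} and \eqref{eq:landscape-steiner-domin-semig} from the domination hypothesis $|\e^{-tA}f|\le \e^{-tB}|f|$ together with the one-line argument already used in \eqref{eq:musterfilmay}. The starting point is the Spectral Mapping Theorem for $C_0$-semigroups applied to the point spectrum: if $(\lambda,\varphi)$ is an eigenpair of $A$, then $(\e^{-\lambda t},\varphi)$ is an eigenpair of $\e^{-tA}$ for every $t\ge 0$. Hence, for each fixed $t>0$,
\[
|\varphi|=|\e^{-\lambda t}|\,|\e^{-tA}\varphi|=\e^{t\real\lambda}\,|\e^{-tA}\varphi|\le \e^{t\real\lambda}\,\e^{-tB}|\varphi|\le \e^{t\real\lambda}\,\e^{-tB}(\|\varphi\|_\rho\rho)=\|\varphi\|_\rho\,\e^{t\real\lambda}\,\e^{-tB}\rho,
\]
using in the last two steps that $\e^{-tB}$ is a positive operator (being dominated, it maps $E_+$ into $E_+$, or directly: $\e^{-tB}|f|\ge|\e^{-tB}f|\ge 0$ for $f\in E_+$) and the AM-space property $|\varphi|\le\|\varphi\|_\rho\rho$. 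Taking the infimum over $t>0$ gives \eqref{eq:landscape-steiner-domin-semig}.

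For the resolvent estimate \eqref{eq:landscape-superabstr-pos-domin-resolv} I would pass from the semigroup to the resolvent via the Laplace transform representation. Fix $\mu$ with $\real\mu>\max\{s(-A),s(-B)\}=\max\{s(-A),s(-B)\}$; then $\real\mu$ exceeds the growth bound of neither semigroup, so both Laplace integrals converge absolutely and
\[
(\real\mu+B)^{-1}\rho=\int_0^\infty \e^{-(\real\mu)t}\,\e^{-tB}\rho\,\mathrm{d}t,\qquad (\mu+A)^{-1}\varphi=\int_0^\infty \e^{-\mu t}\,\e^{-tA}\varphi\,\mathrm{d}t.
\]
Since $\varphi$ is an eigenvector, $(\mu+A)^{-1}\varphi=(\mu+\lambda)^{-1}\varphi$, i.e.\ $\varphi=(\mu+\lambda)(\mu+A)^{-1}\varphi$. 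Taking moduli and using $|\e^{-\mu t}\e^{-tA}\varphi|\le \e^{-(\real\mu)t}\e^{-tB}|\varphi|\le \e^{-(\real\mu)t}\e^{-tB}(\|\varphi\|_\rho\rho)$ inside the integral (the Bochner integral is norm-continuous and the order is compatible with the integral), one obtains
\[
|\varphi|=|\mu+\lambda|\,\bigl|(\mu+A)^{-1}\varphi\bigr|\le |\mu+\lambda|\,\|\varphi\|_\rho\int_0^\infty \e^{-(\real\mu)t}\e^{-tB}\rho\,\mathrm{d}t=|\mu+\lambda|\,\|\varphi\|_\rho\,(\real\mu+B)^{-1}\rho.
\]
This already yields \eqref{eq:landscape-superabstr-pos-domin-resolv} once we sharpen $|\mu+\lambda|$ to $|\real(\lambda+\mu)|$: note that $\int_0^\infty\e^{-(\real\mu)t}\e^{-tB}\rho\,\mathrm{d}t$ depends only on $\real\mu$, so we are free to optimise over the imaginary part of $\mu$ separately; the same computation run with $\real\mu$ in place of $\mu$ on the left shows $|\varphi|=|\real(\lambda+\mu)|\,|(\real\mu+A)^{-1}\varphi|$ is \emph{not} valid (the vector $(\real\mu+A)^{-1}\varphi$ need not be $(\real\mu+\lambda)^{-1}\varphi$ when $\lambda$ is not real), so instead one keeps the complex $\mu$ and simply observes that, since the right-hand side only involves $\real\mu$, one may as well take the infimum over all complex $\mu$ with the prescribed real part, and for each value of $\real\mu$ the sharpest constant is $\inf_{\Im\mu}|\mu+\lambda|=|\real\mu+\real\lambda|=|\real(\lambda+\mu)|$, attained as $\Im\mu\to-\Im\lambda$. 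Replacing $\real\mu$ by a free real parameter then gives \eqref{eq:landscape-superabstr-pos-domin-resolv}.

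For the final two displays \eqref{eq:landscape-bootstr}, if $(\e^{-tA})_{t\ge 0}$ is positive then it dominates itself (as in \autoref{lem:positi-charac}: $|\e^{-tA}f|\le \e^{-tA}|f|$ follows from $-|f|\le\pm f\le|f|$ and positivity), so we may take $B=A$ in both \eqref{eq:landscape-superabstr-pos-domin-resolv} and \eqref{eq:landscape-steiner-domin-semig}; rewriting the resolvent infimum with the substitution $\real\mu=s(-A)+\delta$, $\delta>0$, and dropping the modulus on $\real\lambda+s(-A)+\delta$ (which is automatically positive once we further ask $\delta$ large, or simply keeping it as a modulus and noting it equals $\real\lambda+s(-A)+\delta$ for $\delta$ beyond $-\real\lambda-s(-A)$, the relevant regime for the infimum), we recover the two stated inequalities. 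I expect the only genuinely delicate point to be the justification of pulling the modulus and the order inequality through the vector-valued (Bochner) Laplace integral, and the bookkeeping for the $|\mu+\lambda|\rightsquigarrow|\real(\lambda+\mu)|$ sharpening; everything else is the Spectral Mapping Theorem plus the by-now-standard \eqref{eq:musterfilmay} computation. One small subtlety worth flagging: for \eqref{eq:landscape-steiner-domin-semig} to be a genuine improvement one wants $t\mapsto\e^{t\real\lambda}\e^{-tB}\rho$ to actually decrease somewhere, which requires $\real\lambda<$ the exponential growth rate of $t\mapsto\e^{-tB}\rho$ along $\rho$; but as an inequality it holds for every $t>0$ regardless, so no hypothesis is needed for the statement as written.
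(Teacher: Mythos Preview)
Your argument is correct and follows essentially the same route as the paper: the Spectral Mapping Theorem plus domination for \eqref{eq:landscape-steiner-domin-semig}, then the Laplace transform representation to pass to resolvents and the optimisation over $\Im\mu$ (choosing $\Im\mu=-\Im\lambda$) to sharpen $|\lambda+\mu|$ to $|\real(\lambda+\mu)|$ for \eqref{eq:landscape-superabstr-pos-domin-resolv}. One trivial slip: in your first display the factor $|\e^{-\lambda t}|$ should read $|\e^{\lambda t}|$ (since $\varphi=\e^{\lambda t}\e^{-tA}\varphi$), consistent with the $\e^{t\real\lambda}$ you write immediately after.
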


Observe that even if both $(\mu+C)^{-1}\rho,\e^{-tC}\rho$ lie in $E_\rho$ for all $\mu\in\rho(C)$ and $t\ge 0$, their infima over $\mu,t$ generally need not.
All estimates in \eqref{eq:landscape-superabstr-pos-domin-resolv}, \eqref{eq:landscape-steiner-domin-semig}, and \eqref{eq:landscape-bootstr} are thus meant in a pointwise sense, using the identification $E_\rho\simeq C(K;\R)$ for some compact Hausdorff space by~\cite[Section~II.7]{Sch74}.

\begin{proof}
By the Spectral Mapping Theorem $(\e^{-\lambda t},\varphi)$ is for all $t\ge 0$ an eigenpair of $\e^{-tA}$. By domination and due to positivity of the dominating operator, this yields \eqref{eq:landscape-steiner-domin-semig} by observing that
\begin{equation}\label{eq:landscape-steiner-proof-domin}
\begin{split}
|\varphi|&=|\e^{\lambda t}\e^{-t A}\varphi|\le \e^{t \real \lambda }\e^{-t B}|\varphi|\le \e^{t\real \lambda}\|\varphi\|_\rho   \e^{-t B} \rho\qquad \hbox{for all }t\ge 0.
\end{split}
\end{equation}
Now,  using the representation of the resolvents as Laplace transforms of the $C_0$-semigroups one sees that $(\real \mu+B)^{-1}$ dominates $(\mu+A)^{-1}$, too, for all $\mu\in \C$ with $\real \mu>\max\{s(-A),s(-B)\}$: we thus immediately deduce that
\begin{equation*}\label{eq:landscape-superabstr-pos-domin-resolv-bis}
|\varphi|\le \|\varphi\|_\rho |\lambda+\mu| (\real \mu+B)^{-1}\rho
\end{equation*}
Finally, \eqref{eq:landscape-bootstr} follows taking the infimum over all $\mu$ : in particular, the term $ |\lambda+\mu|$ is minimized if we pick along $\real \mu+i\R$ the complex number with same imaginary part as $\lambda$.
 
The last assertion is obvious, since a positive semigroup dominates itself.
\end{proof}

In the spirit of \autoref{prop:landscape-abstr}.\ref{item:test}, a counterpart of \eqref{eq:landscape-steiner-domin-semig} may even be formulated in the case of mere \textit{individual eventual} domination, a phenomenon which has been recently described in \cite{GluMug21,AroGlu22c}. We omit the obvious details.

\begin{rem}\label{rem:altern-steiner}
If $A$ generates a positive, self-adjoint $C_0$-semigroup in $E=L^2(X)$, then we can express \eqref{eq:landscape-steiner-domin-semig} by saying that $\varphi$ can only localize in the sets $\{\frac{1}{\e^{-tA}\rho}\le \e^{t \lambda}\}=\{-\frac{\log (\e^{-tA}\rho)}{t}\le \lambda\}$, for any $t>0$. If, additionally, $A$ has positive inverse, then plugging (for $\mu=0$) \eqref{eq:landscape-superabstr-pos-domin-resolv} into the penultimate term in \eqref{eq:landscape-steiner-proof-domin} shows that 
\[
|\varphi|\le \|\varphi\|_\rho |\lambda|\e^{t \lambda }\e^{-tA} A^{-1}\rho\qquad \hbox{for all }t>0,
\]
i.e., $\varphi$ can only localize in $\{\frac{1}{A^{-1}\e^{-tA}\rho}\le |\lambda| \e^{t\real \lambda}\}$. It would be interesting to understand the relation with the alternative landscape function $\frac{\e^{-tA}\rho}{A^{-1}\rho}$ proposed in \cite[Section~4.2]{Ste21}.
\end{rem}

Let us note an  immediate consequences of \eqref{eq:landscape-superabstr-pos-domin-resolv}.

\begin{cor}\label{cor:heatkernel}
Let $X$ be a $\sigma$-finite measure space, and let $-A$ generate a positive, trace-class $C_0$-semigroups on $L^2(X)$.
Let additionally $A$ be a self-adjoint operator, and let $(\lambda_k,\varphi_k)_{k\in \N}$ be a sequence of eigenpairs of $A$ such that $(\varphi_k)_{k\in \N}$ form an orthonormal basis of $L^2(X)$. Then for all $0\le \rho\in L^2(X)$ the heat kernel $(k_t)_{t\ge 0}$ of $(\e^{-tA})_{t\ge 0}$ satisfies
\begin{equation}\label{eq:estim-heatkernel}
k^A_t(x,y)\le  \sum_{k\in\N} \e^{-t\lambda_k}  \tilde{\rho}_k(x)\tilde{\rho}_k(y),\qquad x,y\in X,
\end{equation}
provided the series on the right hand side converges in $L^2(X)$, and 
where
\[
\tilde{\rho}_k(z):=\|\varphi_k\|_\infty^2\inf_{\delta>0}|\lambda_k+s(-A)+\delta| \left[(s(-A)+\delta+A)^{-1}\rho\right](z),\qquad z\in X.
\]
\end{cor}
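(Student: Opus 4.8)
The plan is to start from the spectral expansion of the heat kernel.  Since $-A$ generates a positive, trace-class (in particular Hilbert--Schmidt) self-adjoint $C_0$-semigroup on $L^2(X)$ and $(\varphi_k)_{k\in\N}$ is an orthonormal basis of eigenvectors, the standard spectral theorem for compact self-adjoint semigroups gives
\[
k^A_t(x,y)=\sum_{k\in\N}\e^{-t\lambda_k}\varphi_k(x)\varphi_k(y),\qquad x,y\in X,
\]
with convergence in $L^2(X\times X)$; because each $\e^{-t\lambda_k}>0$ and each term $\varphi_k(x)\varphi_k(y)$ is a rank-one symmetric kernel, I would argue that the partial sums are non-decreasing "along the diagonal" in a suitable sense and hence it suffices to bound each summand.

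The second, main step is the pointwise control of $|\varphi_k|$.  Each $(\lambda_k,\varphi_k)$ is an eigenpair of $A$ and, since $\e^{-tA}$ is trace-class, every $\varphi_k$ lies in the range of the (positive, compact) semigroup, hence is bounded, so $\varphi_k\in E_\rho$ with $\rho=\mathbf 1$-type majorant replaced here by $\|\varphi_k\|_\infty$; more precisely for $0\le\rho\in L^2(X)$ with $\varphi_k\in L^\infty\subset E_\rho$ we may apply \autoref{cor:landscape-abstr-domin} with $A=B$ (the semigroup being positive).  Estimate~\eqref{eq:landscape-bootstr} then yields, for every $k$ and every $\delta>0$,
\[
|\varphi_k(z)|\le\|\varphi_k\|_\infty\,|\lambda_k+s(-A)+\delta|\,\big[(s(-A)+\delta+A)^{-1}\rho\big](z),
\]
and taking the infimum over $\delta>0$ gives exactly $|\varphi_k(z)|\le\tilde\rho_k(z)$ after absorbing one factor $\|\varphi_k\|_\infty$ from the gauge-norm normalization (the square $\|\varphi_k\|_\infty^2$ in the statement accounts for the product $\varphi_k(x)\varphi_k(y)$).

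The final step is to combine the two: for each fixed $x,y$,
\[
\e^{-t\lambda_k}\varphi_k(x)\varphi_k(y)\le\e^{-t\lambda_k}|\varphi_k(x)|\,|\varphi_k(y)|\le\e^{-t\lambda_k}\tilde\rho_k(x)\tilde\rho_k(y),
\]
and summing over $k\in\N$ gives~\eqref{eq:estim-heatkernel}, provided the right-hand series converges in $L^2(X)$ as assumed.  The main obstacle I anticipate is the rigorous handling of pointwise versus $L^2$ convergence: the series for $k^A_t$ converges a priori only in $L^2(X\times X)$, the $\tilde\rho_k$ are defined via an infimum over $\delta$ that need not lie in $E_\rho$, and one must justify passing termwise inequalities through these limits.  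This is dealt with by working, as the paper suggests, with the identification $E_\rho\simeq C(K;\R)$ so that infima and countable sums of positive elements are interpreted pointwise on $K$, and by invoking the convergence hypothesis on $\sum_k\e^{-t\lambda_k}\tilde\rho_k(x)\tilde\rho_k(y)$ to upgrade $L^2$-convergence of the kernel to the claimed pointwise (a.e.) bound via a subsequence / monotonicity argument.
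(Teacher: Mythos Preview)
Your approach---Mercer's theorem for the eigenfunction expansion of $k^A_t$, followed by the termwise landscape bound \eqref{eq:landscape-bootstr} from \autoref{cor:landscape-abstr-domin}---is exactly the paper's; its proof is a two-line sketch citing precisely these two ingredients and nothing more. Your hesitation over the exponent $\|\varphi_k\|_\infty^2$ in the definition of $\tilde\rho_k$ is justified: comparison with the special case \eqref{eq:filmay-heatkernel-invertible} shows the intended power is $1$, so that the product $\tilde\rho_k(x)\tilde\rho_k(y)$ carries the correct total factor $\|\varphi_k\|_\infty^2$.
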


If, in particular, $A$ has positive inverse, then \eqref{eq:estim-heatkernel} reads
\begin{equation}\label{eq:filmay-heatkernel-invertible}
k^A_t(x,y)\le  \sum_{k\in\N} \e^{-t\lambda_k} \|\varphi_k\|_\infty^2 |\lambda_k|^2 A^{-1}\rho(x) A^{-1}\rho(y),\qquad x,y\in X,
\end{equation}
with the right hand side converging in $L^2(X)$ if, in particular, $(\|\varphi_k\|_\infty)_{k\in \N}$ grows at most polynomially in $k$. (Non-trivial cases where $(\|\varphi_k\|_\infty)_{k\in \N}$ is even bounded are known, see e.g.~\cite[Lemma~3]{BifKer24}.)
In this case, the ground state qualifies as a landscape function, see~\cite[Lemma~4.2.2 and Theorem~4.2.4]{Dav89}.

\begin{proof}
By Mercer's Theorem the heat kernel of $(\e^{-tA})_{t\ge 0}$ is given by
\begin{equation}\label{eq:heatk-mercer}
k^A_t(x,y)=\sum_{k=1}^\infty \e^{-t\lambda_k}\varphi_k(x)\varphi_k(y),\qquad x,y\in X.
\end{equation}
The assertion now follows immediately from
\eqref{eq:landscape-bootstr} upon taking $\rho=\mathbf{1}$.
\end{proof}

\begin{rem}\label{rem:modulusetc}
(1) Observe an immediate consequence of \autoref{cor:landscape-abstr-domin}, based on the notion of an operator's \textit{modulus}, i.e., its smallest positive dominating operator, see~\cite[Section~IV.1]{Sch74}: 
For each eigenpair $(\lambda,\varphi)$ of $A$ such that $\varphi\in E_\rho$ we have the following.
\begin{enumerate}[(1$'$)]
\item If the resolvent operators $(\zeta+A)^{-1}$ of $A$ have a modulus $[(\zeta+A)^{-1}]^\#$ for all $\zeta>s(-A)$, then 
\begin{equation}\label{eq:landscape-steiner-domin-modul-'}
|\varphi|\le |\real(\lambda+\mu)|\|\varphi\|_\rho  [(\real \mu+A)^{-1}]^\#\rho \qquad\hbox{for all }\mu\in\rho(A).
\end{equation}
\item If $A$ is a $C_0$-semigroup generator, and if $(\e^{-t A})_{t\ge 0}$ has a modulus $C_0$-semigroup $(\e^{-t A^\sharp})_{t\ge 0}$, then
\begin{equation}\label{eq:landscape-steiner-domin-semigr-'}
|\varphi|\le \|\varphi\|_\rho  \e^{t\real \lambda}\e^{-tA^\sharp} \rho\qquad\hbox{for  all }t\ge 0.
\end{equation}
\end{enumerate}
By definition of modulus operator, these bounds are the best possible in the class of those that can be derived in the framework of \autoref{cor:landscape-abstr-domin}.

However, a non-positive operator need generally not have a modulus operator; indeed, the existence of a modulus operator is known in only a few cases, including some classes of integral operators $T_q$, in which case the modulus of $T_q$ is the integral operator $T_{|q|}$: 
 this is the setting implicitly assumed in the second part of~\cite{FilMay12} and in~\cite[Proposition~0.1]{FilMay12b}.
Even in this case, there need not exist an operator whose resolvent kernel is $|q_\mu|$ (resp., whose heat kernel is $|p_t|$), which makes it difficult to compute the landscape functions $[(\mu+A)^{-1}]^\# \mathbf 1$ (resp., $\e^{-tA^\#}\mathbf{1}$).

(2) If, however, $A$ is a finite square matrix, or an infinite matrix that acts as a bounded operator on some $\ell^p$-space, then it is known (cf.~\cite[Example C.II-4.19]{Nag86}) that $(\e^{-tA})_{t\ge 0}$ does  have a modulus semigroup $(\e^{-tA^\sharp})_{t\ge 0}$, and that its generator $A^\sharp$ is given by $A^\sharp_{ii}=\real A_{ii}$ and $A^\sharp_{ij}=|A_{ij}|$ if $i\ne j$. If $s(-A^\sharp)<0$, then clearly $s(-A)<0$ and 
\[
|A^{-1}|\le \int_0^\infty |\e^{-tA}|\ud t\le \int_0^\infty \e^{-tA^\sharp }\ud t\le (A^\sharp)^{-1}:
\]
this sharpens the main result in \cite{LemPacOvd20}, where a similar but weaker bound was proved under additional assumptions.

(3) Let us stress a special case, for which we could not find a reference in the literature: we will follow the notation from~\cite[Section~2.1]{Mug14} and~\cite[Section~2]{Mug13}. Let $\mG=(\mV,\mE,\nu,\mu)$ be an undirected, uniformly locally finite combinatorial graph, with edge weight $\mu$ and vertex weight $\nu$, and let $(\alpha_{\mv\mw})_{\mv,\mw\in\mV}\subset \R$ such that $\alpha_{\mv\mw}=-\alpha_{\mw\mv}$ 
and $\alpha_{\mv\mw}=0$ whenever $\mv\not\sim\mw$: such $\alpha$ are called \textit{magnetic signatures} of $\mG$. Let $\mathcal L_\alpha$ denote 
the magnetic Laplacian on $\mG$, a bounded operator defined as the self-adjoint operator associated with 
the quadratic form
\[
f\mapsto \frac{1}{2}\sum_{\mv\sim\mw} \mu_{\mv\mw} |f(\mv)-\e^{i\alpha_{\mv\mw}}f(\mw)|^2,\qquad f\in \ell^2(\mV,\nu):
\]
this class of operators has been introduced in~\cite{ColTorTru11b} and includes the special cases of (non-magnetic) standard Laplacian $\mathcal L$ and signless Laplacian $\mathcal Q$ for $\alpha= 0$ and $\alpha=\pi$, respectively. By the result mentioned in (2), $(\e^{-t\mathcal L_\alpha})_{t\ge 0}$ has a modulus semigroup, viz $(\e^{-t\mathcal L})_{t\ge 0}$, i.e., $(-\mathcal L_\alpha)^\sharp=-\mathcal L$ for all magnetic signatures $\alpha$; see~\cite{LenSchWir21} for a domination result for the Friedrichs realisation of the magnetic Laplacian on more general, not uniformly locally finite graphs. On the other hand, $(\e^{t\mathcal L_\alpha})_{t\ge 0}$ is generally not positive: its modulus semigroup is $(\e^{t\mathcal Q})_{t\ge 0}$, i.e., $(\mathcal L_\alpha)^\sharp =\mathcal Q$ for all magnetic signatures $\alpha$.  In view of \autoref{cor:landscape-abstr-domin}, this is remarkable because, on \textit{regular} graphs, the low-energy eigenfunctions for $\mathcal L$ are clearly the high-energy eigenfunctions for $\mathcal Q$ -- the crucial observation at the core of the theory of so-called \textit{dual landscapes} as in~\cite{LyrMayFil15,WanZha21}! These discrete diamagnetic inequalities will help prove \autoref{prop:magn-land-appl-discr}.
\end{rem}

\begin{rem}\label{rem:irred}
(1) \autoref{thm:filmay-very-abs-lattice} can be applied to operators $A$ whose resolvents $(\mu+A)^{-1}$ are positive only for specific values $\mu$; and, in particular, such that $(\e^{-tA})_{t\ge 0}$ is \textit{not} positive. 
It has been observed in the last few years that this is not an uncommon phenomenon, see~\cite{AroGlu22c} and references therein.	

 For open bounded domains $\Omega\subset\R^d$,  an example is given by the bi-Laplacian $A:=\Delta^2$ with clamped boundary conditions
\[
u(z)=\frac{\partial u}{\partial \nu}(z)=0\qquad\hbox{for all }z\in \partial\Omega,
\]
which -- if $\Omega$ is close enough to being a ball -- has positive inverse by a famous result due to Boggio~\cite{Bog05}, even though $(\e^{-tA})_{t\ge 0}$ fails to be positive for any  $\Omega$ by \cite[Theorem~2.7]{Ouh05} and general properties of Sobolev spaces. 

(2) 
The elementary rescaling used in the proof of \eqref{eq:landscape-superabstr-pos-domin-resolv} and~\eqref{eq:landscape-bootstr} carries over to the case of nonlinear operators that are 1-homogeneous (but not to general $p$-homogeneous operators for $p>1$!).

(3) Let $-A$ generate a compact, positive, irreducible $C_0$-semigroup and denote by $(\lambda_*,\varphi_*)$ its Perron eigenpair.
The inverse power method yields that $(\lambda_* A^{-1})^n$ converges uniformly, as $n\to\infty$, to the projector $P_*$ onto the eigenspace spanned by the Perron eigenvector.
If, additionally, $A^{-1}$ is a sub-Markovian operator, and hence $A^{-1}\rho\le \rho$, then not only does the sequence of landscape functions $\left(A^{-n}\rho\right)_{n\in \N}$ converge to an eigenvector for the principal eigenvalue; but it also does so in a monotonically  decreasing way. 

\end{rem}

We conclude this section by extending the scope of the lower estimate in~\cite[Proposition~3.2]{ArnDavFil19} and \cite[Lemma~2.12]{WanZha21}. Recall that, by standard Kre\u{\i}n--Rutman theory, an operator $-A$ has a unique strictly positive normalized eigenvector -- the \emph{Perron eigenvector} $\varphi_*$ -- provided $-A$ generates a positive, irreducible, compact $C_0$-semigroup. 

\begin{cor}\label{cor:fil-may-plus}
Let $E$ be a Banach lattice, and let $\rho\in E_+$.
Let $A$ be an invertible linear operator on $E$. Then the following assertions hold.
\begin{enumerate}[(1)]
\item Let the inverse of $A$ be positive.
If $\rho\in D(A)$ and $0\ne A\rho\in E_\rho$,  then 
\[
A^{-1}\rho \ge \frac{\rho}{\|A\rho\|_\rho}.
\]

\item Let $-A$ generate a positive, irreducible, compact $C_0$-semigroup $(\e^{-t A})_{t\ge 0}$.  If $\varphi_*\in E_\rho$, then the Perron eigenpair  $(\lambda_*,\varphi_*)$ satisfies
\begin{equation}\label{eq:landscape-superabstr-perron}
0<\varphi_*< |\lambda_*|\|\varphi_*\|_\rho  A^{-1}\rho.
\end{equation}
If, additionally, $(\e^{-t A})_{t\ge 0}$ is analytic, then 
\begin{equation}\label{eq:landscape-steiner-perron}
0<\varphi_*< \e^{t \lambda_*}\|\varphi_*\|_\rho  \e^{-tA} \rho\qquad\hbox{for all }t\ge 0,
\end{equation}
and also
\begin{equation}\label{eq:balexpgro}
\varphi_*\le \|\varphi_*\|_\rho P_* \rho.
\end{equation}
\end{enumerate}
\end{cor}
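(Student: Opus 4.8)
The plan is to treat the two assertions separately, deriving (1) by a direct order argument and (2) by combining (1) with the sharper estimates already established in \autoref{prop:landscape-abstr} and \autoref{cor:landscape-abstr-domin}, together with Kre\u{\i}n--Rutman theory. For (1), the key observation is that since $A\rho\in E_\rho$ we have, by the very definition of the gauge norm in~\eqref{eq:min-gauge}, the pointwise bound $A\rho\le |A\rho|\le \|A\rho\|_\rho\,\rho$. Applying the positive operator $A^{-1}$ to this inequality — which preserves order — yields $\rho\le \|A\rho\|_\rho\, A^{-1}\rho$, and dividing by the positive scalar $\|A\rho\|_\rho$ (which is nonzero because $A\rho\ne 0$) gives exactly $A^{-1}\rho\ge \rho/\|A\rho\|_\rho$. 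The only subtlety is justifying $\|A\rho\|_\rho>0$: this holds because $A\rho\ne 0$ and the gauge norm is a genuine norm on the principal ideal $E_\rho$.

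For (2), I would first invoke Kre\u{\i}n--Rutman: since $-A$ generates a positive, irreducible, compact $C_0$-semigroup, the Perron eigenvector $\varphi_*$ exists, is strictly positive, and $\lambda_*=s(-A)$ is a simple eigenvalue with strictly positive eigenprojector $P_*$. Applying \autoref{cor:landscape-abstr-domin} with $A=B$ (the semigroup being positive, it dominates itself) to the eigenpair $(\lambda_*,\varphi_*)\in E_\rho$ gives $|\varphi_*|\le |\lambda_*|\,\|\varphi_*\|_\rho\, A^{-1}\rho$, i.e.\ the non-strict version of~\eqref{eq:landscape-superabstr-perron}. To upgrade to strict inequality one uses irreducibility once more: $A^{-1}$ is then an irreducible positive operator, and the difference $|\lambda_*|\,\|\varphi_*\|_\rho\, A^{-1}\rho-\varphi_*$ is a nonnegative element; applying $A^{-1}$ (or the semigroup) and using that an irreducible positive operator maps nonzero nonnegative vectors to quasi-interior points forces strict positivity of this difference — unless it vanishes identically, which is excluded because $\varphi_*=|\lambda_*|\,\|\varphi_*\|_\rho\, A^{-1}\rho$ together with $\varphi_*\in E_\rho$ would, via part (1) applied with $\rho$ replaced suitably, contradict the attainment structure of the gauge norm (equivalently: equality in~\eqref{eq:musterfilmay}-type estimates fails strictly for an irreducible semigroup since $\|\varphi_*\|_\rho\rho-|\varphi_*|$ is nonzero). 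The same pattern gives~\eqref{eq:landscape-steiner-perron}: analyticity of $(\e^{-tA})_{t\ge 0}$ yields eventual (indeed immediate) strong positivity after rescaling, so \autoref{prop:landscape-abstr}.\ref{item:test} applies with $t_0=0$, and the strict inequality follows from the same irreducibility argument applied to the strongly positive operator $\e^{-tA}$.

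Finally, \eqref{eq:balexpgro} comes from letting $t\to\infty$ in the rescaled semigroup: for a positive, irreducible, analytic (hence eventually norm-continuous) compact semigroup, $\e^{t\lambda_*}\e^{-tA}$ converges in operator norm to the spectral projector $P_*$ as $t\to\infty$. Passing to the limit in~\eqref{eq:landscape-steiner-perron} — the convergence being in norm, hence order-closed limits are preserved — yields $\varphi_*\le \|\varphi_*\|_\rho\, P_*\rho$. I expect the main obstacle to be the clean justification of the strict inequalities in part (2): one must be careful that $\varphi_*\in E_\rho$ really does give $\|\varphi_*\|_\rho\rho-|\varphi_*|\neq 0$ (which holds since $\rho$ is a quasi-interior point of $E_\rho$ while $\varphi_*$ need not be a scalar multiple of $\rho$), and that irreducibility of the semigroup transfers to the required strong-positivity statement for the resolvent and for individual semigroup operators; both are standard but deserve an explicit line citing~\cite{Nag86} or~\cite{Sch74}.
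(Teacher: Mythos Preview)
Your approach coincides with the paper's: part (1) is the same one-line order argument, and part (2) assembles the same ingredients --- Kre\u{\i}n--Rutman for existence and strict positivity of $\varphi_*$, the non-strict landscape bound from \autoref{cor:landscape-abstr-domin}, irreducibility to upgrade to strict inequality, and balanced exponential growth $\e^{t\lambda_*}\e^{-tA}\to P_*$ for \eqref{eq:balexpgro} --- though the paper simply cites \cite[Section~C-III.3]{Nag86} and \cite[Exercises~V.3.9.(3)]{EngNag00} rather than spelling any of this out. One soft spot worth flagging: your justification that $\|\varphi_*\|_\rho\rho-\varphi_*\ne 0$ (``$\varphi_*$ need not be a scalar multiple of $\rho$'') is not actually a proof that it \emph{isn't}; if $\rho$ happens to be a positive multiple of $\varphi_*$ then $|\lambda_*|\|\varphi_*\|_\rho A^{-1}\rho=\varphi_*$ and the upper inequality in \eqref{eq:landscape-superabstr-perron} degenerates to equality, so strictness tacitly requires $\rho\notin\Span\{\varphi_*\}$ --- the paper is equally silent on this edge case.
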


\begin{proof}
(1) Observe that
\(
A\rho\le \|A\rho\|_\rho\rho,
\)
whence the claim follows, again by positivity of $A^{-1}$.

(2) By \cite[Proposition~C-III.3.5]{Nag86} the spectral radius of $A$ is a simple eigenvalue $\lambda_*$ and the associated eigenspace is spanned by a positive eigenvector $\varphi_*$. Furthermore, the upper inequality in~\eqref{eq:landscape-superabstr-perron} is strict by~\cite[Definition~C-III.3.1]{Nag86}; while \eqref{eq:landscape-steiner-perron} follows from~\cite[Theorem~C-III.3.2.(a)]{Nag86} if $(\e^{-tA})_{t\ge 0}$ is analytic. 
Finally, \eqref{eq:balexpgro} follows from  balanced exponential growth, i.e., $\lim_{t\to \infty}\e^{t\lambda_*}\e^{-tA}=P_*$, see~\cite[Exercises~V.3.9.(3)]{EngNag00}.
\end{proof}

\subsection{Estimates by integral kernels}\label{sec:integr}

Linear operators acting on function spaces $L^p(X)$ are integral operators whenever $X$ is a discrete space.
More interestingly,  each operator from $L^p(X)$ to $L^\infty(X)$, for any $p\in [1,\infty)$, is an integral operator;
this and further conditions for an operator's resolvent to be integral, or for it to generate a $C_0$-semigroup consisting of integral operators, are presented in~\cite[Section~4]{AreBuk94}. 

\begin{prop}\label{lem:fundam-hoeld}
Let $A$ be a closed linear operator on $C_0(X)$, where $X$ is a locally compact metric space; or else on $L^q(X)$,  for some $q\in [1,\infty]$ and some measure space $X$. Furthermore, let $\rho$ be a  function  on $X$. Then the following assertions hold for all eigenpairs $(\lambda,\varphi)$ of $A$.

\begin{enumerate}[(1)]
\item Let, for some $\mu\in \C$, $\mu+A$ be invertible and let its inverse be an integral operator with integral kernel $q_\mu$. Then for all $r_1,r_2\in [1,\infty]$ such that $r_1^{-1}+r_2^{-1}=1$ we have
\begin{equation}\label{eq:integr-k-estim-1}
|\varphi(x)|\le |\lambda+\mu| \left\|\frac{\varphi}{\rho} \right\|_{r_1}  \left(\int_X |q_\mu(x,y)\rho(y)|^{r_2}\ud y\right)^\frac{1}{r_2}\qquad \hbox{for all/a.e. }x\in X,
\end{equation}
provided both terms on the right hand side are finite.
\item Let $-A$ generate a $C_0$-semigroup and let, for some $t_0>0$, $\e^{t_0 A}$ be an integral operator with integral kernel $p_{t_0}$. Then for all $r_1,r_2\in [1,\infty]$ such that $r_1^{-1}+r_2^{-1}=1$ we have
\begin{equation}\label{eq:integr-k-estim-2}
|\varphi(x)|\le \e^{t_0\real \lambda} \left\|\frac{\varphi}{\rho} \right\|_{r_1} \left(\int_X |p_{t_0}(x,y)\rho(y)|^{r_2}\ud y\right)^\frac{1}{r_2}\qquad \hbox{for all/a.e. }x\in X,
\end{equation}
provided both terms on the right hand side are finite.
\end{enumerate}
\end{prop}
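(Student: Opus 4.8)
The plan is to follow the same template as in the proof of \autoref{thm:filmay-very-abs}, namely to lift the eigenvalue equation to an inequality between absolute values and then apply H\"older's inequality to the integral representation of the relevant operator. The two statements are entirely parallel, so I would treat (1) in detail and note that (2) is obtained by the obvious substitutions.

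For part (1): since $(\lambda,\varphi)$ is an eigenpair of $A$, it is also an eigenpair of $(\mu+A)^{-1}$ for the eigenvalue $(\lambda+\mu)^{-1}$; that is, $\varphi=(\lambda+\mu)(\mu+A)^{-1}\varphi$. Using that $(\mu+A)^{-1}$ is an integral operator with kernel $q_\mu$, I would write, for a.e.\ (resp.\ every) $x\in X$,
\[
|\varphi(x)|=|\lambda+\mu|\left|\int_X q_\mu(x,y)\varphi(y)\ud y\right|\le |\lambda+\mu|\int_X |q_\mu(x,y)|\,|\varphi(y)|\ud y.
\]
Now I would insert the weight $\rho$, writing $|\varphi(y)|=\frac{|\varphi(y)|}{\rho(y)}\,\rho(y)$ (this is the point where $\rho$ must be taken strictly positive, or at least positive where it matters, so that the quotient $\varphi/\rho$ makes sense as a measurable function), and then apply H\"older's inequality with exponents $r_1,r_2$ satisfying $r_1^{-1}+r_2^{-1}=1$ to the two factors $\frac{|\varphi(y)|}{\rho(y)}$ and $|q_\mu(x,y)\rho(y)|$. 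This yields exactly
\[
|\varphi(x)|\le |\lambda+\mu|\left\|\frac{\varphi}{\rho}\right\|_{r_1}\left(\int_X |q_\mu(x,y)\rho(y)|^{r_2}\ud y\right)^{1/r_2},
\]
which is \eqref{eq:integr-k-estim-1}, and both factors on the right are finite by hypothesis. The endpoint cases $r_1=\infty$, $r_2=1$ and $r_1=1$, $r_2=\infty$ are handled by the usual conventions for H\"older's inequality and require no separate argument.

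For part (2): by the spectral mapping theorem $(\e^{-\lambda t_0},\varphi)$ is an eigenpair of $\e^{-t_0A}$, so $\varphi=\e^{\lambda t_0}\e^{-t_0A}\varphi$ and hence $|\varphi(x)|\le \e^{t_0\real\lambda}\int_X|p_{t_0}(x,y)|\,|\varphi(y)|\ud y$; the same insertion of $\rho$ and application of H\"older's inequality gives \eqref{eq:integr-k-estim-2}. There is essentially no obstacle here beyond bookkeeping: the only genuinely substantive points are that the eigenvalue equation transfers to the inverse (resp.\ to the semigroup), which is elementary, and that one must know $\varphi/\rho$ is measurable and that the stated integrals converge, which is assumed outright. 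I would close by remarking that taking $r_1=\infty$ (so $r_2=1$) and $\rho\equiv\mathbf 1$ recovers the Filoche--Mayboroda--van den Berg bound of \autoref{thm:filmay-very-abs} applied with $A^{(-1)}_+$ the integral operator with kernel $|q_0|$, while other choices of $(r_1,r_2)$ and of $\rho$ produce the sharper variants alluded to in the introduction.
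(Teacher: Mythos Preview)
Your proposal is correct and follows exactly the approach of the paper's proof, which is a one-liner: use the identities $\varphi=(\lambda+\mu)(\mu+A)^{-1}\varphi$ and $\e^{-t_0\lambda}\varphi=\e^{-t_0A}\varphi$, write out the integral representation, and apply H\"older's inequality with the weight $\rho$. You have merely spelled out the details (and added the harmless closing remark about recovering the Filoche--Mayboroda--van den Berg bound).
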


\begin{proof}
Using  Hölder's inequality, the claims follow directly from the relations \(\varphi=(\lambda+\mu) (\mu+A)^{-1}\varphi\) and \(\e^{-t\lambda }\varphi=\e^{-tA}\varphi\), respectively.
\end{proof}

\begin{exa}
(1) The fundamental estimate~\cite[(7)]{FilMay12}, which already in the original paper by  Filoche--Mayboroda extends the theory of landscape functions to operators whose Green function is not positive, is a special case of \eqref{eq:integr-k-estim-1} with $r_1=\infty$, $r_2=1$, for  $\mu=0$ and $\rho=\mathbf{1}$; whereas on a bounded, open, connected domain $\Omega\subset \R^d$, for the same choice of parameters, \eqref{eq:berbutt-gener} boils down to the known estimate $1\le \lambda_* \|v\|_\infty$, on the principal eigenvalue $\lambda_*$ of the elliptic operator with drift $A:=-\Delta^{\mathrm{D}}+B\cdot \nabla$ with Dirichlet boundary conditions~\cite{DonVar76}; here $v:=A^{-1}\mathbf{1}$.

(2) Let us now consider the fourth derivative with hinged boundary conditions, i.e., $A=(-\Delta^{\mathrm{D}})^2$, on $L^2(0,1)$. Then $\lambda_*=\pi^4\approx 97.40909$, whereas $A^{-1}\mathbf{1}(x)=\frac{1}{24}x^4 - \frac{1}{12}x^3 + \frac{1}{24}x$, whence
$\|A^{-1}\mathbf{1}\|_\infty^{-1}=76.8$. In the case of the forth derivative with clamped boundary conditions, see \autoref{rem:irred}.(1), we obtain $A^{-1}\mathbf{1}(x)=\frac{1}{24}x^4 - \frac{1}{12}x^3 + \frac{1}{24}x^2$, whence $\|A^{-1}\mathbf{1}\|_\infty^{-1}=384\le \lambda_*$.
\end{exa}

\section{Eigenvalue estimates, the torsion function, and alternative landscape functions}\label{sec:eigenv-est}

In \cite[Lemma~8.16]{Bovden16}, a classical result by Donsker and Varadhan \cite{DonVar75,DonVar76} was paraphrased as follows: 	If $A:=-\Delta-b\cdot \nabla$ is an  elliptic operator on a bounded open domain $\Omega$ -- say, with  Dirichlet conditions imposed on $\partial \Omega$ smooth enough, with uniformly elliptic leading term and an appropriate drift term $b$ --, then its principal eigenvalue $\lambda_*$ satisfies
\begin{equation}\label{eq:donvar17}
1\le \lambda_* \|-A^{-1}\mathbf{ 1}\|_\infty.
\end{equation}
An alternative proof that makes use of the maximum principle only is presented in \cite{LuSte17}:  this paves the road to extending the Donsker--Varadhan estimate to any  general elliptic operator that is defined weakly as the operator associated with the form
\[
a(f,g):=\int_\Omega \left(a \nabla f\cdot \nabla g+(b\cdot \nabla f)g+f(c\cdot \nabla g)+Vfg\right)\ud x
\]
with real-valued $L^\infty$-coefficients $a,b,c,V$ and Dirichlet or (dissipative) Robin boundary conditions, as the latter generate a positive semigroup in view of  the results in~\cite[Section~4.2]{Ouh05}, and, hence, their (invertible) generators enjoy a weak maximum principle.

In the spirit of the previous sections, let us extend \eqref{eq:donvar17} to more general settings.
 Recall that given a Banach lattice $E$, some $\rho\in E_+$ is called \textit{quasi-interior} if $E_\rho$ is dense in $E$. We denote by ${\mathcal Q}^A_+(E)$ the set of all quasi-interior points $\rho$ of $E$ such that $D(A)\subset E_\rho$.

\begin{prop}\label{cor:pseudo-giosmi-linear}
Let $A$ be a closed linear operator on a complex Banach lattice $E$. Let $\rho\in E_+$, and let $D(A)\subset E_\rho$. Then the following assertions hold.

\begin{enumerate}[(1)]
\item 
Assume that $\mu+A$ is invertible for some $\mu\in \C$, and that there exists an operator $A^{(-1)}_{+,\mu}$ that dominates its inverse. Then each approximate eigenvalue $\lambda$ of $A$ satisfies
\begin{equation}\label{eq:giosmi-approx-general-individ}
1\le \inf_{\rho\in {\mathcal Q}^A_+(E)}  |\lambda+\mu|  \|A^{(-1)}_{+,\mu} \rho\|_\rho.
\end{equation}

\item 
If, in particular, $A$ is resolvent-positive, i.e., $\zeta+A$ is positive for all $\zeta>s(-A)$, then  each approximate eigenvalue $\lambda$ of $A$ satisfies
\begin{equation}\label{eq:giosmi-approx-general-individ-posit-ur}
1\le \inf_{\rho\in {\mathcal Q}^A_+(E)}  \inf_{\real \mu>s(-A)} |\real(\lambda+\mu)|  \|(\real\mu+A)^{-1}\rho\|_\rho.
\end{equation}

\item If $-A$ generates a positive $C_0$-semigroup that is dominated by a further semigroup, say $(\e^{-tB})_{t\ge 0}$, then each eigenvalue $\lambda$ of $A$ satisfies
\begin{equation}\label{eq:giosmi-approx-general-parab-domin}
-\inf_{\rho\in {\mathcal Q}^A_+(E)} \inf_{t>0} \frac{\log\|\e^{-tB} \rho\|_\rho}{t}\le \real \lambda .
\end{equation}
and in particular
\begin{equation}\label{eq:giosmi-approx-general-parab}
-\inf_{\rho\in {\mathcal Q}^A_+(E)} \inf_{t>0} \frac{\log\|\e^{-tA} \rho\|_\rho}{t}\le \real \lambda .
\end{equation}
if $(\e^{tA})_{t\ge 0}$ is positive.
\end{enumerate}
\end{prop}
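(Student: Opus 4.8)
The plan is to derive all three estimates from the pointwise eigenvector bounds already in hand --- \autoref{thm:filmay-very-abs-lattice} and \autoref{cor:landscape-abstr-domin} --- by applying the gauge norm $\|\cdot\|_\rho$ to both sides of such a bound and cancelling the common factor $\|\varphi\|_\rho$; this cancellation is legitimate since $\varphi\ne 0$ lies in $E_\rho$ (because $D(A)\subseteq E_\rho$) and $E_\rho\hookrightarrow E$ continuously, so $\|\varphi\|_\rho\ge\|\rho\|_E^{-1}\|\varphi\|_E>0$. The only real wrinkle is that (1) and (2) deal with \emph{approximate} eigenvalues, for which no eigenvector need exist; there the plan is to argue with $(\mu+A)^{-1}$ viewed as a bounded operator on the $AM$-space $E_\rho$.

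For (1): if $A^{(-1)}_{+,\mu}\rho\notin E_\rho$ then $\|A^{(-1)}_{+,\mu}\rho\|_\rho=+\infty$ and, since $-\mu$ lies in the resolvent set so that $\lambda+\mu\ne 0$, nothing is to be proved; so assume $w:=A^{(-1)}_{+,\mu}\rho\le c\rho$ with $c:=\|w\|_\rho$. Positivity of $A^{(-1)}_{+,\mu}$, domination $|(\mu+A)^{-1}f|\le A^{(-1)}_{+,\mu}|f|$, and $|f|\le\|f\|_\rho\rho$ give $|(\mu+A)^{-1}f|\le c\|f\|_\rho\rho$ for $f\in E_\rho$; since the range of $(\mu+A)^{-1}$ lies in $D(A)\subseteq E_\rho$, this says $(\mu+A)^{-1}$ restricts to a bounded operator $B$ on $E_\rho$ with $\|B\|\le c$, hence $r(B)\le c$. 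Independently, $(\mu+A)^{-1}\colon E\to E_\rho$ has closed graph (continuity of $E_\rho\hookrightarrow E$ and boundedness of $(\mu+A)^{-1}$ on $E$), hence is bounded. Taking now a sequence $\varphi_n\in D(A)\subseteq E_\rho$ with $\|\varphi_n\|_\rho=1$ and $(A-\lambda)\varphi_n\to 0$ in $E$, applying $(\mu+A)^{-1}$ to $(\mu+A)\varphi_n=(\lambda+\mu)\varphi_n+(A-\lambda)\varphi_n$ and using boundedness of $(\mu+A)^{-1}\colon E\to E_\rho$ yields $B\varphi_n-(\lambda+\mu)^{-1}\varphi_n\to 0$ in $E_\rho$, so $(\lambda+\mu)^{-1}\in\sigma(B)$ and hence $|\lambda+\mu|^{-1}\le r(B)\le c=\|A^{(-1)}_{+,\mu}\rho\|_\rho$. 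Taking the infimum over $\rho\in\mathcal Q^A_+(E)$ gives \eqref{eq:giosmi-approx-general-individ}.

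For (2): the bound in \eqref{eq:giosmi-approx-general-individ-posit-ur} depends on $\mu$ only through $\real\mu$, so fix real $r>s(-A)$; by resolvent positivity $(r+A)^{-1}$ is positive and dominates $(r+is+A)^{-1}$ for all $s\in\R$ (a classical feature of resolvent-positive operators, also obtainable from the Laplace-transform representation of the resolvent used in the proof of \autoref{cor:landscape-abstr-domin}). Applying (1) with $\mu:=r-i\operatorname{Im}\lambda$ --- so that $\lambda+\mu=\real\lambda+r$ is real --- and $A^{(-1)}_{+,\mu}:=(r+A)^{-1}$ gives $1\le|\real\lambda+r|\,\|(r+A)^{-1}\rho\|_\rho$, and taking infima over $r$ and $\rho$ finishes it. For (3), which is about genuine eigenpairs $(\lambda,\varphi)$ with $0\ne\varphi\in D(A)\subseteq E_\rho$, I would use $e^{-tA}\varphi=e^{-\lambda t}\varphi$ together with $|\varphi|=e^{t\real\lambda}|e^{-tA}\varphi|\le e^{t\real\lambda}e^{-tB}|\varphi|\le e^{t\real\lambda}\|\varphi\|_\rho\,e^{-tB}\rho$ (domination, then positivity of $e^{-tB}$ applied to $|\varphi|\le\|\varphi\|_\rho\rho$); applying $\|\cdot\|_\rho$, cancelling $\|\varphi\|_\rho$, rearranging to $-t^{-1}\log\|e^{-tB}\rho\|_\rho\le\real\lambda$ and taking infima over $t>0$ and $\rho$ gives \eqref{eq:giosmi-approx-general-parab-domin}; the choice $B=A$ is legitimate since a positive semigroup dominates itself, giving \eqref{eq:giosmi-approx-general-parab}.

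The step I expect to be the crux is the passage, inside (1), from ``$\lambda$ is an approximate eigenvalue of $A$ on $E$'' to ``$(\lambda+\mu)^{-1}\in\sigma(B)$ on $E_\rho$'': it hinges on using \emph{both} that the approximate eigenvectors already sit in $E_\rho$ (because $D(A)\subseteq E_\rho$) and that $(\mu+A)^{-1}$ maps $E$ \emph{boundedly} into $E_\rho$, so a defect that is only small in the $E$-norm becomes small in the finer gauge norm. Everything else reduces to a one-line cancellation of $\|\varphi\|_\rho$; note that (3) must be restricted to genuine eigenvalues precisely because the spectral mapping theorem for the approximate point spectrum of a $C_0$-semigroup can fail, whereas $e^{-t\sigma_p(A)}\subseteq\sigma_p(e^{-tA})$ (with eigenvectors preserved) always holds.
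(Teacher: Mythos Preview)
Your proof is correct and follows the same overall strategy as the paper's: the key technical ingredient in (1) is precisely the closed graph theorem applied to $(\mu+A)^{-1}\colon E\to E_\rho$, and in (2)--(3) your arguments are essentially identical to the paper's. The only difference is one of packaging in part (1): the paper works directly with the pointwise estimate $|\varphi_n|\le|\lambda+\mu|\,\|\varphi_n\|_\rho\,A^{(-1)}_{+,\mu}\rho+|(\mu+A)^{-1}r_n|$, takes the gauge norm, divides by $\|\varphi_n\|_\rho$ (shown to be bounded above and below), and passes to the limit; you instead recast the domination bound as $\|(\mu+A)^{-1}|_{E_\rho}\|\le c$ and conclude via $(\lambda+\mu)^{-1}\in\sigma(B)\Rightarrow|\lambda+\mu|^{-1}\le r(B)\le c$. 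Both routes use the same two facts (the domination estimate and boundedness of $(\mu+A)^{-1}\colon E\to E_\rho$), so this is a cosmetic rather than a substantive difference.

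One small point worth making explicit in your write-up of (1): the sentence ``taking a sequence $\varphi_n\in D(A)$ with $\|\varphi_n\|_\rho=1$ and $(A-\lambda)\varphi_n\to 0$ in $E$'' is not the definition of an approximate eigenvalue (which normalizes in $E$), so you should spell out the renormalization step --- start with $\psi_n$ satisfying $\|\psi_n\|_E=1$, set $\varphi_n:=\psi_n/\|\psi_n\|_\rho$, and use the continuous embedding $E_\rho\hookrightarrow E$ to see $\|\psi_n\|_\rho$ is bounded below, so the defect still tends to zero in $E$. The paper handles this by keeping $\|\varphi_n\|_E\equiv 1$ and separately bounding $\|\varphi_n\|_\rho$ above and below.
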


\begin{proof}
(1) To begin with, observe that if $\rho\in E_+$ and if $D(A)\subset E_\rho$, then  $A^{-1}$ is a bounded operator from $E$ to $E_\rho$, by the closed graph theorem. 
Let $\lambda$ be an approximate eigenvalue: we can, hence, consider a sequence $(\varphi_n)_{n\in\N}\subset D(A)$ with $\|\varphi_n\|\equiv 1$ and such that $r_n:=A\varphi_n-\lambda \varphi_n$ is a sequence that converges to 0; accordingly, $A^{-1 }r_n$ converges to 0 in $E_\rho$. Furthermore, $\varphi_n\in E_\rho$ and 
\[
\|\varphi_n\|_\rho \le C\|A\varphi_n\|\le C\left(\|r_n\|+|\lambda|\|\varphi_n\|\right)=C\left(\sup_{n\in\N}\|r_n\|+|\lambda|\right):=\tilde{C},
\]
i.e., $\left(\|\varphi_n\|_\rho\right)_{n\in\N}$ is bounded; also, $\|\varphi_n\|_\rho\ge c\|\varphi_n\|\equiv c$ because  $E_\rho$ is continuously embedded into $E$, hence $\left(\|\varphi_n\|_\rho^{-1}\right)_{n\in\N}$ is bounded, too.
 Now,
\[
\varphi_n=(\lambda+\mu)(\mu+ A)^{-1}\varphi_n +A^{-1}r_n\qquad \hbox{for all }n\in \N,
\]
and therefore
\[
|\varphi_n|\le (\lambda+\mu)|(\mu+ A)^{-1}\varphi_n| +|A^{-1}r_n|
 \le (\lambda+\mu)|A^{(-1)}_{+,\mu}\varphi_n| +|A^{-1}r_n| \qquad \hbox{for all }n\in \N,
\]
whence
\[
1\le |\lambda+\mu| \|(\mu+ A)^{(-1)}_+ \rho\|_\rho +\|\varphi_n\|_\rho^{-1}\| A^{-1}r_n\|_\rho \qquad \hbox{for all }n\in \N.
\]
Now, \eqref{eq:giosmi-approx-general-individ} can be deduced passing to the limit.

(2)
 \eqref{eq:giosmi-approx-general-individ-posit-ur} follows likewise, because $(\real \mu+A)^{-1}$ dominates $(\mu+A)^{-1}$ for all $\mu\in \C$ with $\real \mu>s(-A)$.

(2) If $(\e^{-tA})_{t\ge 0}$ is positive, then \eqref{eq:landscape-steiner} holds for all $t\ge 0$: taking the gauge norm $\|\cdot\|_\rho$ and then logarithms of both sides we find
\[
-t\real \lambda\le \log \|\e^{-tA}\rho\|_\rho\qquad \hbox{for all }t\ge 0,
\]
whence the claim follows.
 \end{proof}
 
 \begin{rem}
In the prototypical case of Schrödinger operators $A:=-\Delta+V$ with Dirichlet conditions on a bounded open domain $\Omega\subset \R^d$, possible examples of general landscape functions are given by $v:=(-\Delta+V)^{-1}\varphi^{(\beta)}_*$, where $\varphi^{(\beta)}_*$ is the ground state of the free Laplacian with Robin  conditions with parameter $\beta$: a special case is, in particular, $\varphi^{(\beta=0)}_*=\mathbf{1}\in {\mathcal Q}^A_+(L^2(\Omega))$, which \textit{a posteriori} justifies the usual choice of the canonical torsion function as landscape function.
\end{rem}

\begin{rem}
Eigenvalue estimates can, of course, also be derived from the landscape functions for integral operators from Section~\ref{sec:integr}. Indeed, under the assumption of~\autoref{lem:fundam-hoeld}, let $\rho=\mathbf 1$ be an admissible choice. We then obtain the following special cases upon taking $\|\cdot\|_{r_1}$-norms of both sides of \eqref{eq:integr-k-estim-1} and~\eqref{eq:integr-k-estim-2}:
\begin{itemize}
\item with $r_1=\infty$ and $r_2=1$, and provided $\varphi\in L^\infty(X)$,
\begin{equation}\label{eq:berbutt-gener}
1\le |\lambda+\mu| \sup_{x\in X}\int_X |q_\mu(x,y)|\ud y\quad\hbox{and}\quad -\frac{1}{t_0}\log \sup_{x\in X}\int_X |p_{t_0}(x,y)|\ud y \le \real \lambda;
\end{equation}
\item with $r_2=2$ and $r_2=2$, and provided $\varphi\in L^2(X)$,
\[
1\le |\lambda+\mu| \|q_\mu\|_{L^2}\quad\hbox{and}\quad 
-\frac{1}{t_0}\log \|p_{t_0}\|_{L^2} \le \real \lambda
\]
(the latter is, however, rougher than the classical estimate
\(-\inf_{t>0}\frac{1}{t}\log \|e^{-tA}\|\le -s(-A)\),
since $\|p_{t_0}\|_{L^2}$ is the Hilbert--Schmidt norm of $\e^{-t_0 A}$);
\item with $r_1=1$ and $r_2=\infty$, and provided $\varphi\in L^1(X)$,
\[
1\le |\lambda+\mu| \sup_{y\in X}\int_X |q_\mu(x,y)|\ud x\quad\hbox{and}\quad -\frac{1}{t_0}\log \sup_{y\in X}\int_X |p_{t_0}(x,y)|\ud x \le \real \lambda.
\]
\end{itemize}
\end{rem}
 
Likewise, taking the $\|\cdot\|_\rho$-norm of both sides of \eqref{eq:landscape-superabstr-nonl-pos} we immediately obtain a nonlinear version of \autoref{cor:pseudo-giosmi-linear} for the setup introduced in Section~\ref{sec:torsland-nonl}.

\begin{prop}\label{cor:spectral-estim}
Let $A$ be a maximal monotone operator on a real Banach lattice $E$ that is $(p-1)$-homogeneous, for some $p\ge 1$, and invertible with order preserving inverse.
Then each eigenvalue $\lambda$ of $A$ satisfies 
\begin{equation}\label{eq:giosmi-nonlin-abstr}
\sup_{\rho\in {\mathcal Q}^A_+(E)} \|A^{-1}\rho\|_\rho^{1-p} \le  |\lambda|.
\end{equation}
\end{prop}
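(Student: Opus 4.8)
The plan is to read off \eqref{eq:giosmi-nonlin-abstr} directly from the pointwise eigenvector bound already established in \autoref{prop:landscape-abstr-nonlin}. The key observation is that the hypotheses imposed on $A$ here---maximal monotone, $(p-1)$-homogeneous, invertible with order preserving inverse---are exactly those under which \autoref{prop:landscape-abstr-nonlin} applies with the choice $B:=A$: indeed, by \autoref{lem:positi-charac} a single-valued, order preserving $(p-1)$-homogeneous inverse dominates itself, so the domination requirement on $B^{-1}$ is met by $A^{-1}$ itself, and the ``second assertion'' of \autoref{prop:landscape-abstr-nonlin} is available.

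Concretely, I would fix an eigenpair $(\lambda,\varphi)$ of $A$ and an arbitrary $\rho\in{\mathcal Q}^A_+(E)$, and proceed as follows. First, since membership of $\rho$ in ${\mathcal Q}^A_+(E)$ forces $D(A)\subset E_\rho$, and every eigenvector lies in $D(A)$, we have $\varphi\in E_\rho$; moreover $\varphi\neq 0$ and $\|\cdot\|_\rho$ is a genuine lattice norm on $E_\rho$, so $\|\varphi\|_\rho>0$ (if $\|\varphi\|_\rho=0$ then $|\varphi|\le 0$, hence $\varphi=0$). Thus \eqref{eq:landscape-superabstr-nonl-pos}, with $B=A$, applies and gives $|\varphi|\le|\lambda|^{\frac{1}{p-1}}\|\varphi\|_\rho\,A^{-1}\rho$ in $E$, where $A^{-1}\rho\in D(A)\subset E_\rho$. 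Second, apply the gauge norm $\|\cdot\|_\rho$ to both sides: it is monotone and positively homogeneous on $E_\rho$, so this yields $\|\varphi\|_\rho\le|\lambda|^{\frac{1}{p-1}}\|\varphi\|_\rho\,\|A^{-1}\rho\|_\rho$. Third, divide by $\|\varphi\|_\rho>0$ to obtain $1\le|\lambda|^{\frac{1}{p-1}}\|A^{-1}\rho\|_\rho$, i.e.\ $\|A^{-1}\rho\|_\rho^{-1}\le|\lambda|^{\frac{1}{p-1}}$, and raise both sides to the power $p-1$ to get $\|A^{-1}\rho\|_\rho^{1-p}\le|\lambda|$. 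Finally, take the supremum over all $\rho\in{\mathcal Q}^A_+(E)$, which gives \eqref{eq:giosmi-nonlin-abstr}.

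There is no genuine obstacle: the statement is a routine consequence of \autoref{prop:landscape-abstr-nonlin}, exactly as \autoref{cor:pseudo-giosmi-linear} is a consequence of its linear counterparts. The only two points worth a word of care are (i) that eigenvectors really belong to $E_\rho$ for every admissible $\rho$---which is precisely what the defining condition $D(A)\subset E_\rho$ of ${\mathcal Q}^A_+(E)$ supplies---and (ii) that dividing by $\|\varphi\|_\rho$ is legitimate, i.e.\ $\|\varphi\|_\rho\neq 0$, which holds because $\varphi\neq 0$. One may note in passing that the inequality $1\le|\lambda|^{\frac{1}{p-1}}\|A^{-1}\rho\|_\rho$ also forces $\lambda\neq 0$, in agreement with the invertibility of $A$.
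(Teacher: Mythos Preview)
Your proof is correct and follows exactly the same approach as the paper: the paper's proof consists of a single sentence stating that one takes the $\|\cdot\|_\rho$-norm of both sides of \eqref{eq:landscape-superabstr-nonl-pos}, which is precisely what you do (with the routine bookkeeping spelled out). Your additional remarks on why $\varphi\in E_\rho$ and $\|\varphi\|_\rho>0$ are justified fill in details the paper leaves implicit.
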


What kind of $\rho\in {\mathcal Q}_+^A(E)$ should be taken if we try to optimize  the above (approximate) eigenvalue estimates?
A natural choice is $\rho=\mathbf 1$, whenever $E=C(X)$ or $E=L^q(X)$ for some \textit{compact} metric space $X$ or some \textit{finite} measure space $X$, respectively, and $A$ is an operator whose ground states are \textit{not} constant; in the relevant case where $-A$ is the Laplacian with \textit{Dirichlet} conditions, $\mathbf{1}$ is the ground state for the Neumann realization of the same operator.

The following shows that, more generally, $A^{-1}\varphi^B_*$ is often an admissible landscape function, whenever $\varphi^B_*$ is the Perron eigenvector of an operator $B$ such that $(\e^{-tB})_{t\ge 0}$ dominates $(\e^{-tA})_{t\ge 0}$: this is, e.g., the case if $A$ is a Schrödinger operator with positive potential with Dirichlet conditions, and $B$ is the free Laplacian with Robin conditions with positive parameter  -- and, in particular, the Laplacian with Neumann conditions, i.e., $\varphi^*_B=\mathbf{1}$.

\begin{lemma}
Let $A$ generate a $C_0$-semigroup on a Banach lattice $E$, and let 
$B$ generate a compact, positive, irreducible $C_0$-semigroups $(\e^{-tB})_{t\ge 0}$ on $E$ that eventually dominates $(\e^{-tA})_{t\ge 0}$. 

Let $\varphi^B_*$ denote the Perron eigenvector of $B$.
If $(\e^{tB})_{t\ge 0}$ is eventually differentiable and $D(B^k)\subset E_{\varphi^B_*}$ for some $k\in \N$, then each eigenvector of $A$ lies in $E_{\varphi^B_*}$.
\end{lemma}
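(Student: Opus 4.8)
The plan is to combine the parabolic landscape bound from \autoref{prop:landscape-abstr}-type reasoning (here in its dominated form, \autoref{cor:landscape-abstr-domin}) with the regularizing effect of the eventually differentiable semigroup $(\e^{-tB})_{t\ge 0}$. First I would fix an eigenpair $(\lambda,\varphi)$ of $A$; by the Spectral Mapping Theorem $(\e^{-\lambda t},\varphi)$ is an eigenpair of $\e^{-tA}$ for every $t\ge 0$, and in particular $\varphi\in D(B^m)$ makes sense to investigate once we can show $\e^{-sB}\varphi$ is smooth. The point is that $\varphi$ need not a priori lie in $E_{\varphi^B_*}$, so we must first produce \emph{some} $\rho\in E_+$ with $\varphi\in E_\rho$ before we can invoke the landscape estimate; for this one uses that $E$ is a Banach lattice and $|\varphi|\in E_+$, so trivially $\varphi\in E_{|\varphi|}$, but $|\varphi|$ is not obviously quasi-interior or related to $\varphi^B_*$. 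The real work is therefore a bootstrap: show $\e^{-sB}\varphi\in E_{\varphi^B_*}$ for $s$ large, then transfer this back to $\varphi$ using that $\e^{-sB}$ dominates (eventually) $\e^{-sA}$ and that $\varphi$ is an eigenvector of the latter.

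\textbf{Key steps.}
(i) Pick $s>0$ large enough that $\e^{-sA}$ is dominated by $\e^{-sB}$ and $\e^{-sB}$ is $k$-times differentiable as a map into $E$, so $\e^{-sB}E\subset D(B^k)\subset E_{\varphi^B_*}$; hence $\e^{-sB}g\in E_{\varphi^B_*}$ for every $g\in E$. (ii) For an eigenpair $(\lambda,\varphi)$ write $\varphi=\e^{s\lambda}\e^{-sA}\varphi$, so $|\varphi|=|\e^{s\lambda}\e^{-sA}\varphi|\le \e^{s\real\lambda}\e^{-sB}|\varphi|$ by domination and positivity of the dominating operator. (iii) Since $\e^{-sB}|\varphi|\in E_{\varphi^B_*}$ by step (i), there is $c\ge 0$ with $\e^{-sB}|\varphi|\le c\,\varphi^B_*$, whence $|\varphi|\le \e^{s\real\lambda}c\,\varphi^B_*$, i.e. $\varphi\in E_{\varphi^B_*}$. (iv) Note we only needed eventual domination and eventual differentiability at a \emph{single} large time $s$, which is exactly the hypothesis; irreducibility and compactness of $(\e^{-tB})_{t\ge 0}$ enter only to guarantee (via Kre\u{\i}n--Rutman) that $\varphi^B_*$ exists and is a strictly positive, hence quasi-interior, vector, so that $E_{\varphi^B_*}$ is the natural ambient ideal and the conclusion is meaningful.

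\textbf{Main obstacle.}
The delicate point is step (i): turning ``eventually differentiable'' plus ``$D(B^k)\subset E_{\varphi^B_*}$'' into ``$\e^{-sB}E\subset E_{\varphi^B_*}$''. One must check that eventual differentiability of order $k$ means $\e^{-sB}$ maps $E$ into $D(B^k)$ for $s$ past the differentiability threshold — this is standard for analytic or eventually differentiable semigroups, but the order of differentiability and the exact threshold must be tracked so that the same $s$ also realizes the eventual domination. A secondary subtlety is that $D(B^k)\subset E_{\varphi^B_*}$ is an inclusion of vector spaces, and one should confirm (closed graph theorem, since $D(B^k)$ with the graph norm is a Banach space continuously embedded in $E$ and $E_{\varphi^B_*}$ is a Banach lattice) that this inclusion is automatically continuous — not strictly needed for the set-theoretic conclusion ``each eigenvector lies in $E_{\varphi^B_*}$'', but convenient if one later wants quantitative gauge-norm bounds. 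Everything else is a routine application of domination and the $AM$-space structure of principal ideals recalled in the introduction.
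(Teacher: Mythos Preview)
Your proposal is correct and follows essentially the same approach as the paper: pick $s$ large enough that both eventual domination holds and $\e^{-sB}E\subset D(B^k)\subset E_{\varphi^B_*}$ (the paper cites the discussion after \cite[Definition~II.4.3]{EngNag00} for this smoothing property of eventually differentiable semigroups), then chain $|\varphi|=|\e^{s\lambda}\e^{-sA}\varphi|\le \e^{s\real\lambda}\e^{-sB}|\varphi|\le \e^{s\real\lambda}\|\e^{-sB}|\varphi|\|_{\varphi^B_*}\varphi^B_*$. Your additional remarks on the closed graph theorem and on tracking the differentiability threshold are sound but not needed for the bare set-theoretic conclusion, as you yourself note.
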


\begin{proof}
Let $(\lambda^A,\varphi^A)$ be an eigenpair of $A$, and let $t$ be large enough that $\e^{tA}$ is dominated by $\e^{tB}$ and $\e^{t B}$ maps $E$ to $D(B^k)$, cf.\ the discussion after~\cite[Definition~II.4.3]{EngNag00}. We then have
\[
|\varphi^A|=|\e^{t\lambda^A}\e^{-tA}\varphi^A|\le \e^{t\real \lambda^A}\e^{-tB}|\varphi^A|\le \e^{t\real \lambda^A}\|\e^{-tB}|\varphi^A| \|_{\varphi^B_*}\varphi^B_*
\]
using the fact that, by assumption, $\e^{tB}E\subset E_{\varphi^B_*}$.
\end{proof}

\section{Applications}\label{sec:appl}

If we take $E=L^q(X)$ for some finite measure space $X$ and some $q\in [1,\infty]$, then $\rho=\mathbf{1}$ is a quasi-interior point of $E$ and we end up with $E_\rho=E_{\mathbf{1}}=L^\infty(X)\hookrightarrow E$. This simple but illustrative example will be the basic setting discussed in the following examples. All plots presented in this section are obtained with Maple 2022.
 
\subsection{The second derivative  with Dirichlet boundary conditions}\label{sec:toy-model}
To begin with, we consider a toy model. Let $-A$ be the second derivative with Dirichlet boundary conditions on a bounded interval
\[
A=-\frac{\ud^2}{\ud x^2},\qquad D(A)=H^2(0,1)\cap H^1_0(0,1),
\]
which generates a positive, irreducible, compact (and analytic) $C_0$-semigroup. Therefore, 
\autoref{cor:landscape-abstr-domin}
can be applied: we will compare our estimates obtained with those obtained in~\cite{FilMay12,Ste17}.

Because $A$ is self-adjoint, $\|(\mu+A)^{-1}\|= (\dist(\mu,\sigma(-A)))^{-1}$ for all $\mu\in\rho(A)$ and, therefore,
\[
\inf_{\mu>s(-A)}|\lambda+\mu| \|(\mu+A)^{-1}\|=\lim_{\mu\to\infty}|\lambda+\mu| \|(\mu+A)^{-1}\|=1.
\]
One can expect that optimizing \eqref{eq:basic-fm-superabstr-lattice} over $\mu$ by allowing for large value of $\mu$ should be rewarding.
In practice, we often observe a remarkable phenomenon for $|\lambda+\mu| (\mu+A)^{-1}\mathbf 1$: the \emph{amplitude} induced for large $\mu$ tends to be more accurate, while the \emph{profile} of the ground state is better described for small $\mu$: the latter property was interpreted as a key to the study of localization features in~\cite{FilMay12}.

Heuristically,  \eqref{eq:landscape-bootstr} seems to offer a good trade-off.
For the above choice of $A$ we have $s(-A)=-\pi^2$, and applications of 
\autoref{cor:landscape-abstr-domin}
are presented in \autoref{fig:steiner?}. Let us stress the similarity of the optimal landscape function in \autoref{fig:steiner?} with that found, by different methods, in~\cite[Section~3.1]{HosQuaSte22}.

\begin{figure}[ht]
\begin{minipage}{6.5cm}
\includegraphics[width=6cm, height= 6cm]{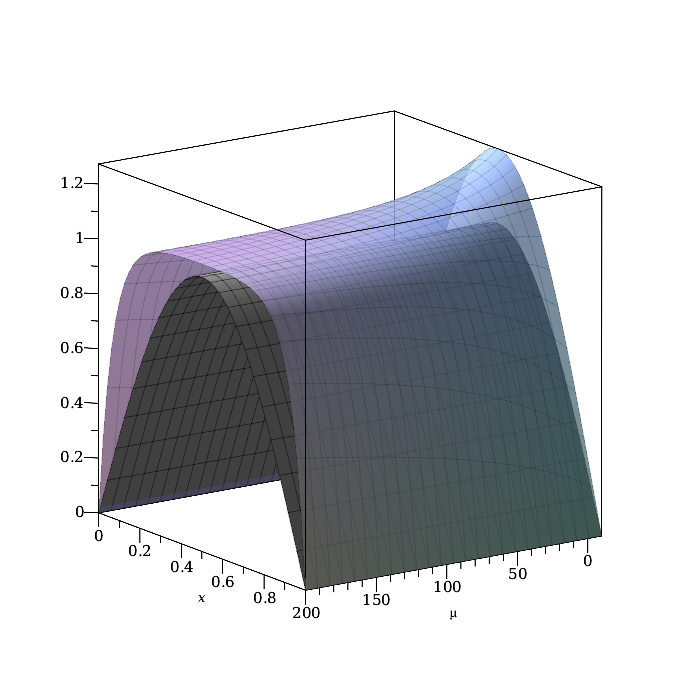}
\end{minipage}
\begin{minipage}{1pt}

\end{minipage}
\begin{minipage}{6.5cm}
\includegraphics[width=5cm, height= 4cm]{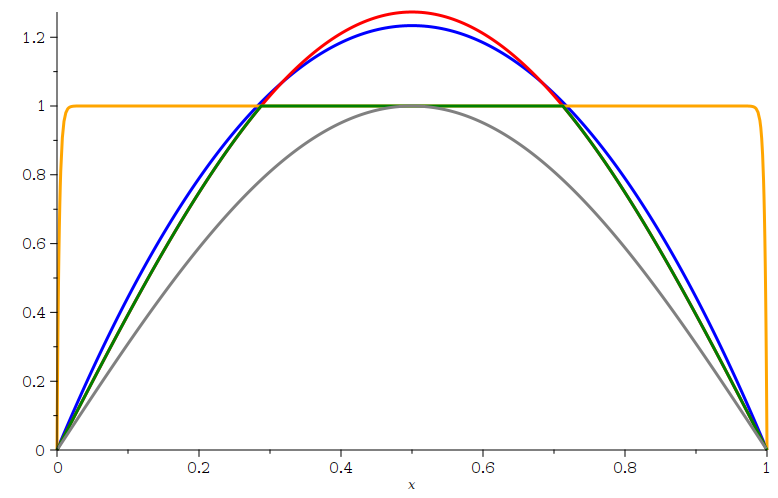}
\end{minipage}
\vspace{-15pt}
\caption{Left: The values of $(-s(-A)+\mu) (\mu+A)^{-1}\mathbf 1$ for $\mu\in [s(-A),200]$, where $-A$ is the second derivative with Dirichlet boundary conditions.\\
Right: An improved landscape function obtained by \autoref{cor:landscape-abstr-domin}: we have plotted $10^{-5}(-\pi^2+10^{-5}+A)^{-1}\mathbf 1$ and $10^{5}(-\pi^2+10^{5}+A)^{-1}\mathbf 1$ in red and orange, respectively, and and their pointwise minimum -- which by \autoref{cor:landscape-abstr-domin} dominates the ground state -- in green. The canonical landscape function $\pi^2 A^{-1}\mathbf 1$ from~\cite{FilMay12} is plotted as well (in blue).\\
For comparison, we have also plotted the actual ground state, i.e., $\sin(\pi \cdot)$ (in gray).}
\label{fig:steiner?}
\end{figure}

While it seems that, for the purpose of applying~\autoref{cor:landscape-abstr-domin}, there is no use in sampling $|-s(-A)+\mu| (\mu+A)^{-1}\mathbf 1$ at other positive values than $\mu=s(-A)+\sigma$ for $\sigma\approx 0+$ and $\sigma\approx +\infty$, further improvement can be achieved by using the anti-maximum principle mentioned in~\autoref{prop:landscape-abstr}.(\ref{item:amti}). Also, replacing $\mathbf 1$ by a  smoother $\rho$ may allow for yet more precise estimates: taking $\rho:= A^{-1}\mathbf 1$, which is inspired by \autoref{rem:irred}.(3),  seems to be a smart choice, see~\autoref{fig:coool!} (left). 

\begin{rem}
Observe that $A^{-1}\mathbf 1$ is a polynomial of degree two. When applying \autoref{prop:landscape-abstr}, different polynomials can, of course, be considered as candidates for $\rho$, as long as $D(A)\subset E_\rho$: this condition, however, is not always satisfied, as the choice of $\rho(x):=x^2(1-x)^2$ shows, as in this case $\varphi:=\sin(\pi \cdot)\leq c\rho$ fails to hold for any $c\ge 0$.
\end{rem}

In the case of the present $A$, it is even feasible to derive estimates based on the parabolic landscape function, as in \autoref{cor:landscape-abstr-domin}. Indeed, expanding the $C_0$-semigroup in Fourier series shows that \eqref{eq:landscape-bootstr} reads in this case
\begin{equation}\label{eq:bound-heat}
\frac{|\varphi(x)|}{\|\varphi\|_\infty}\le \frac{4}{\pi} \inf_{t>0}\sum_{k\ \mathrm{ odd }}\frac{\e^{t(\lambda-\pi^2 k^2)}}{ k}\sin(\pi k x)\qquad\hbox{for all }x\in [0,1],
\end{equation}
see~\autoref{fig:coool!} (right): this bound has been found already in~\cite[Section~3.1]{HosQuaSte22}. 

\begin{figure}[ht]
\begin{minipage}{5cm}
\includegraphics[width=5cm, height=4cm]{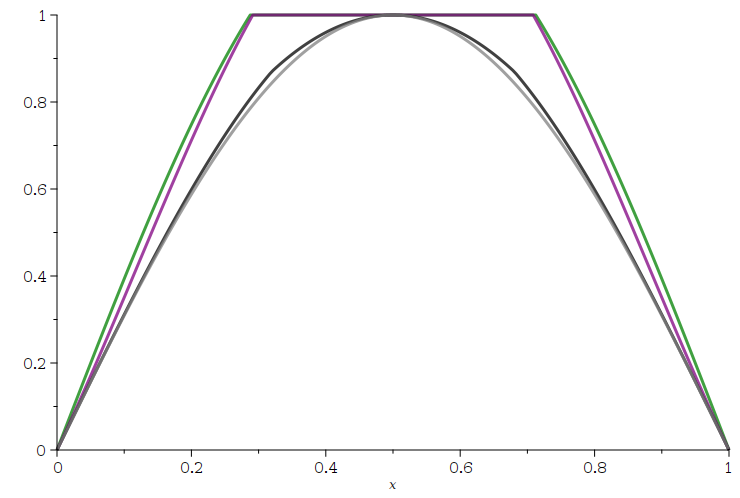}
\end{minipage}
\begin{minipage}{1cm}
\
\end{minipage}
\begin{minipage}{5cm}
\includegraphics[width=5cm, height=4cm]{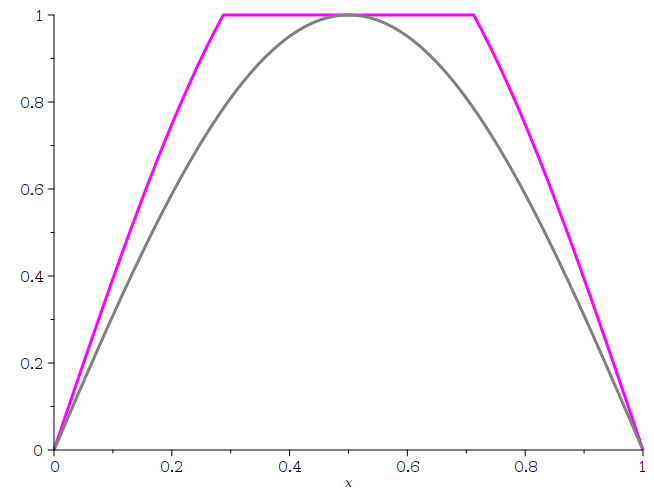}
\end{minipage}
\vspace{-15pt}
\caption{Left: In green: the best landscape function from \autoref{fig:steiner?}. In purple: a further improved landscape function obtained applying~\eqref{eq:landscape-bootstr}: we have taken the pointwise minimum of $-7(-\pi^2-7+A)^{-1}\mathbf 1$ and the optimal landscape function from~\autoref{fig:steiner?}.
We also present a landscape function obtained applying \eqref{eq:landscape-bootstr} with $\rho(x)=4x(1-x)$: we have plotted in black the pointwise minimum of $-7(-\pi^2-7+A)^{-1}\rho$ and $10^{5}(-\pi^2+10^{5}+A)^{-1}\rho$.\\
Right: A landscape function obtained applying the right estimate in \eqref{eq:landscape-bootstr}: we have plotted in magenta the pointwise minimum of $\e^{10^{-5}\pi^2}\e^{-10^{-5}A}\mathbf{1}$ and $\e^{10^{5}\pi^2}\e^{-10^{5}A}\mathbf{1}\simeq P_*\mathbf{1}$. The function $\e^{-tA}\mathbf{1}$ has been approximated truncating the series in~\eqref{eq:bound-heat} after the first 150 terms.
\\
In gray: the actual ground state (both left and right).\\
}\label{fig:coool!}
\end{figure}
\autoref{fig:higher} shows that our approach delivers reasonable pointwise eigenvector bounds even for higher eigenvalues. Indeed, by \autoref{cor:heatkernel} we can use these estimates to deliver upper bounds on the heat kernel, see \autoref{fig:heatkern}. As the resolution of the eigenvector bound \eqref{eq:landscape-superabstr-pos-domin-resolv} becomes lower and lower for higher eigenvalues, the bound cannot capture the degeneracy of the heat kernel as $t\to 0$, but is reasonably accurate for larger $t$.

\begin{figure}[ht]
\begin{minipage}{5.5cm}
\includegraphics[width=5cm, height= 4cm]{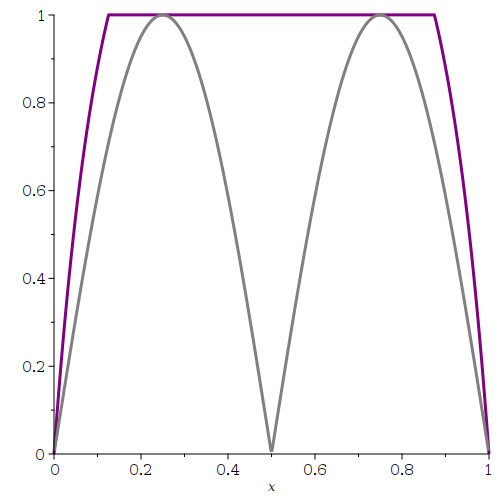}
\end{minipage}
\begin{minipage}{1cm}
\
\end{minipage}
\begin{minipage}{5.5cm}
\includegraphics[width=5cm, height= 4cm]{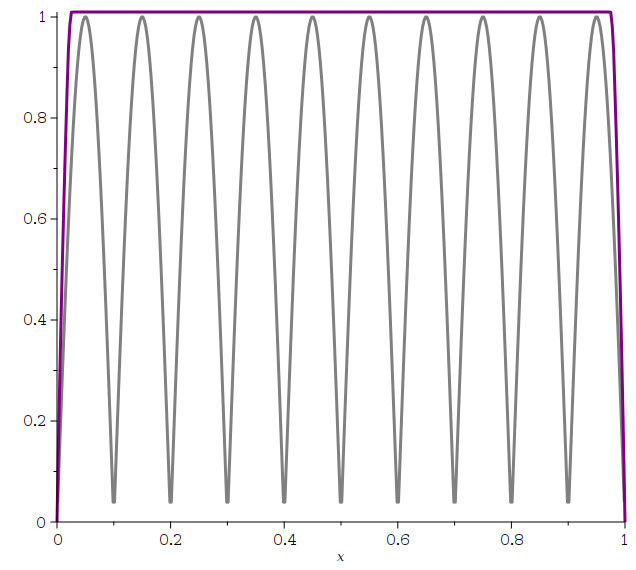}
\end{minipage}
\vspace{-15pt}
\caption{Improved landscape functions for absolute values of the second and tenth eigenvectors of $A$, again as in \eqref{eq:landscape-bootstr}. In purple:  The pointwise minimum of the values of $(\lambda_k+\mu) (\mu+A)^{-1}\mathbf 1$ for $\mu\in \{10^{-5},10,10^2,10^3,10^4,10^5\}$ for $k=2$ (left) and $k=10$ (right).
In gray: the absolute value of the actual eigenvectors.}
\label{fig:higher}
\end{figure}

\begin{figure}[ht]
\begin{minipage}{5.5cm}
\includegraphics[width=5.5cm, height= 5cm]{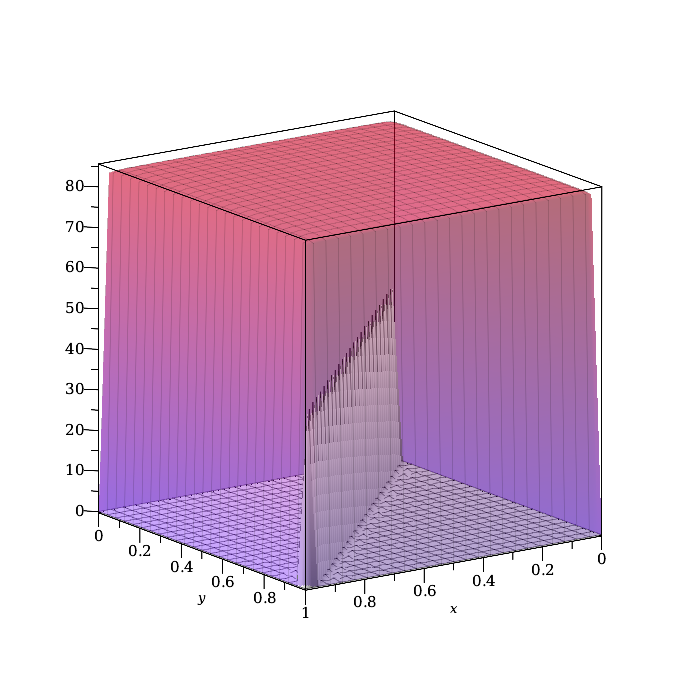}
\end{minipage}
\begin{minipage}{5.5cm}
\includegraphics[width=5.5cm, height= 5cm]{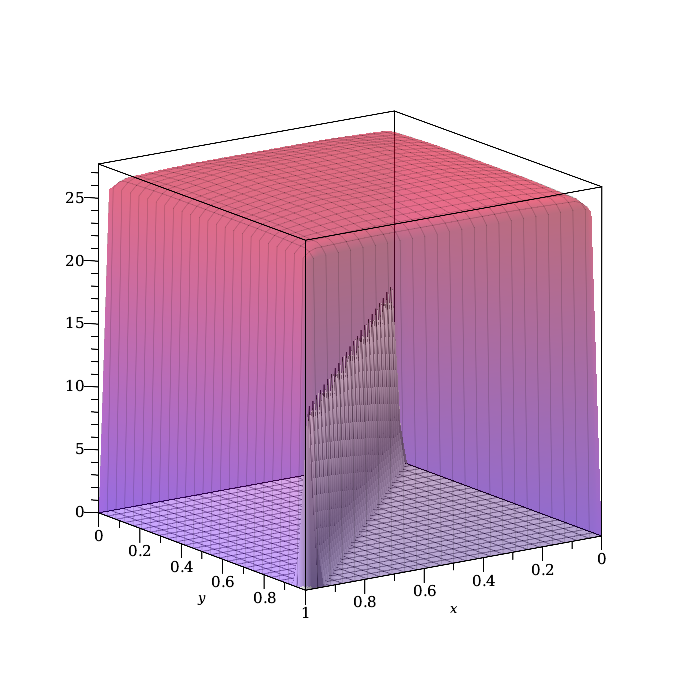}
\end{minipage}
\begin{minipage}{5.5cm}
\includegraphics[width=5.5cm, height= 5cm]{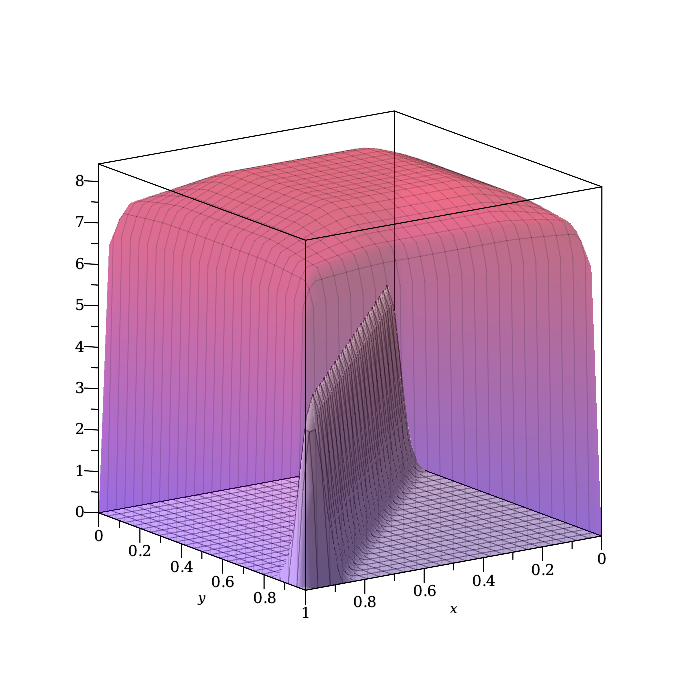}
\end{minipage}

\vspace{-12pt}
\begin{minipage}{5.5cm}
\includegraphics[width=5.5cm, height= 5cm]{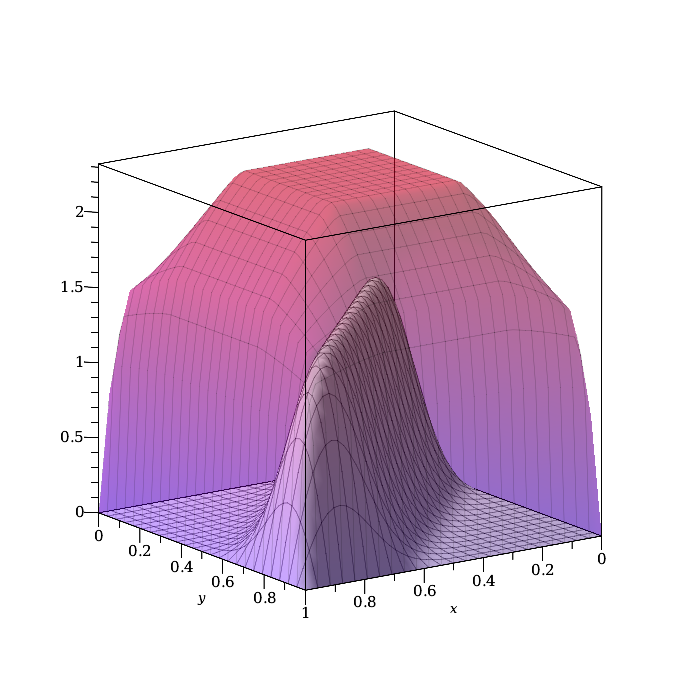}
\end{minipage}
\begin{minipage}{5.5cm}
\includegraphics[width=5.5cm, height= 5cm]{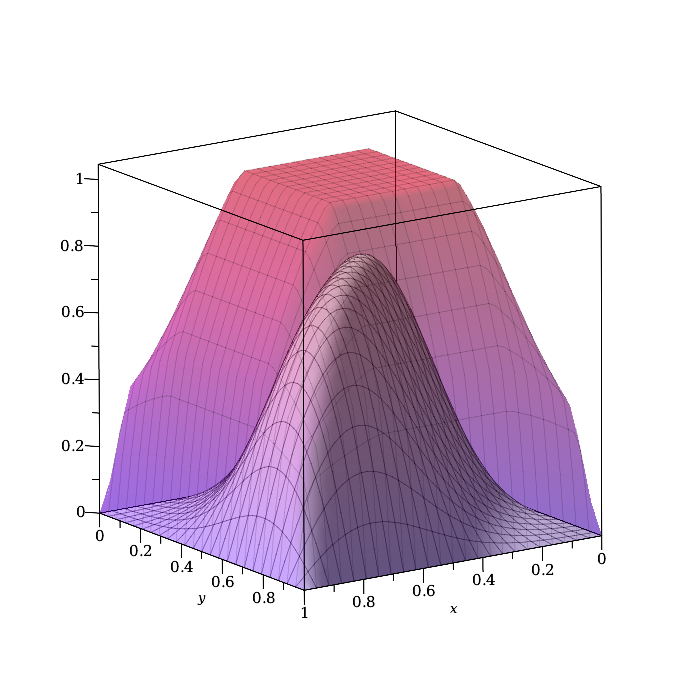}
\end{minipage}
\begin{minipage}{5.5cm}
\includegraphics[width=5.5cm, height= 5cm]{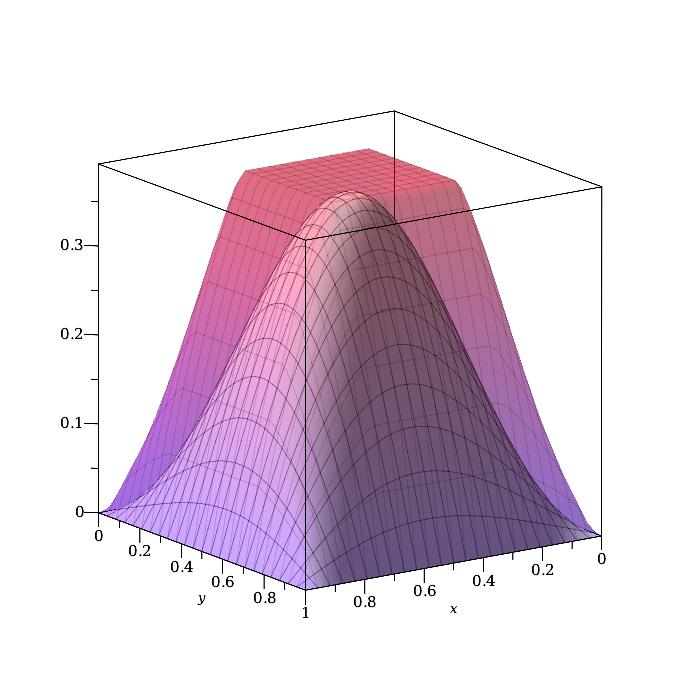}
\end{minipage}
\vspace{-15pt}
\caption{The actual heat kernel (in gray) for the second derivative with Dirichlet conditions on $(0,1)$ and its pointwise upper bound (in purple) given by \eqref{eq:filmay-heatkernel-invertible}, at time $t=10^{-5}$, $t=10^{-4}$, $t=10^{-3}$, $t=10^{-2}$, $t=3\cdot 10^{-1}$, $t=10^{-1}$; the plots of both functions are obtained truncating the corresponding series after 150 terms. 
}
\label{fig:heatkern}
\end{figure}

Finally, given any $C_0$-semigroup that is dominated by the heat semigroup $(\e^{-tA})_{t\ge 0}$ with Dirichlet conditions on $L^2(0,1)$, then its generator's principal value $\lambda_*$ can be estimated by \eqref{eq:giosmi-approx-general-parab}: we find 
\begin{equation}\label{eq:giosmi-approx-general-parab-appl}
\begin{split}
 \lambda_* &\ge 
- \inf_{t>0} \frac{\log\|\e^{-tA}\mathbf{1}\|_\infty}{t}
=
- \inf_{t>0}\frac{1}{t}\log  \frac{4}{\pi }\sum\limits_{k\ \mathrm{ odd }}^\infty \frac{\e^{-t\pi^2k^2}}{ k}.
\end{split}
\end{equation}
This estimate is possibly similar in spirit to that in~\cite[Section~4(a)]{LuSte17}, but the latter appears to be more accurate.

\subsection{$p$-Laplacians}\label{exa:main-nonlinear}
Let us apply our theory to a common nonlinear operator: the $p$-Laplacian $\Delta_p:=\nabla\cdot(|\nabla|^{p-2}\nabla)$ on a bounded open set $\Omega\subset \R^d$, for $p\in (1,\infty)$. We consider the energy functional $\mathcal E:L^2(\Omega)\to \R\cup \{\infty\}$ defined by
\[
\mathcal E:f\mapsto
\begin{cases}
\|\nabla f\|^p_{p},\quad &\hbox{if }f\in W^{1,p}_0(\Omega),\\
\infty,&\hbox{otherwise},
\end{cases}
\]
or else by
\[
\mathcal E:f\mapsto
\begin{cases}
\|\nabla f\|^p_{p}+\beta\|f_{|\partial\Omega}\|^p_p,\quad &\hbox{if }f\in W^{1,p}(\Omega),\\
\infty,&\hbox{otherwise},
\end{cases}
\]
for $\beta\in(0,\infty)$. Because $\mathcal E$ is $p$-homogeneous, its subdifferential is $(p-1)$-homogeneous: indeed, $\partial \mathcal E$ is (minus) the $p$-Laplacian with Dirichlet or Robin boundary conditions,
i.e.,
\begin{equation}\label{eq:bc-p}
f(z)=0 \qquad\hbox{or}\qquad \vert \nabla f \vert^{p-2} \frac{\partial f}{\partial \nu}(z)+\beta |f(z)|^{p-2}f(z)=0,\qquad z\in \Omega,
\end{equation}
 respectively: let us take $A:=\partial \mathcal E$. Then $A$ is accretive; it is invertible whenever endowed with Dirichlet or (for $\beta> 0$) Robin conditions.

\begin{prop}
Let $\Omega\subset \R^d$ be a bounded open domain.
Then, for each $p\in (1,\infty)$, each eigenpair $(\lambda,\varphi)$ of the $p$-Laplacian with either Dirichlet or Robin boundary conditions as in~\eqref{eq:bc-p}, for $\beta> 0$,   satisfies
\begin{equation}\label{eq:nonli-fm-p}
\frac{|\varphi(x)|}{\left\|\varphi\right\|_\infty}\le |\lambda|^\frac{1}{p-1}  (-\Delta_p)^{-1}\mathbf{1}(x)\qquad \hbox{for a.e.\ }x\in \Omega.
\end{equation}
Furthermore, the lowest eigenvalue $\lambda_{\min,p}$ satisfies
\begin{equation}\label{eq:giosmi-p}
\lambda_{\min,p}\ge  \|(-\Delta_p)^{-1}\mathbf{1}\|_\infty^{1-p}
\end{equation}
and, in the case of Dirichlet boundary conditions, we find for the Cheeger constant $h(\Omega)$
\begin{equation}\label{eq:giosmi-p-cheeg}
h(\Omega)\ge \lim_{p\to 1+} \|(-\Delta_p)^{-1}\mathbf{1}\|_\infty^{1-p}.
\end{equation}
\end{prop}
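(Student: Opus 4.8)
The plan is to deduce each of the three estimates from the abstract machinery of Sections~\ref{sec:torsland-nonl} and~\ref{sec:eigenv-est}, applied in the Banach lattice $E=L^2(\Omega)$ with $\rho=\mathbf 1$ (so that $E_\rho=L^\infty(\Omega)\hookrightarrow E$ and $\|\cdot\|_\rho=\|\cdot\|_\infty$). First I would record that $A:=\partial\mathcal E$, the (negative) $p$-Laplacian with Dirichlet or with Robin ($\beta>0$) conditions, is maximal monotone and $(p-1)$-homogeneous, and that it is invertible in both cases; this was already asserted just before the statement. The decisive structural input is that $A^{-1}$ is \emph{order preserving}: this is the comparison principle for the $p$-Laplacian with these boundary conditions (the references to Mazón and Le that the paper cites in the introduction supply this), and by \autoref{lem:positi-charac} it then dominates itself, so in particular $A^{-1}f\ge 0$ for $f\ge 0$ and $(-\Delta_p)^{-1}\mathbf 1>0$ is well defined. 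With this in hand, \eqref{eq:nonli-fm-p} is \emph{exactly} \autoref{prop:landscape-abstr-nonlin} applied with $B=A$ (the ``second assertion'' there, valid since $A^{-1}$ is order preserving), $\rho=\mathbf 1$, using $\|\,|\varphi|^{p-1}\|_\infty\le\|\varphi\|_\infty^{p-1}$; nothing beyond unwinding notation is required.

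For \eqref{eq:giosmi-p} I would invoke \autoref{cor:spectral-estim}: since $A$ is maximal monotone, $(p-1)$-homogeneous, invertible with order-preserving inverse, every eigenvalue $\lambda$ satisfies $\sup_{\rho\in\mathcal Q^A_+(E)}\|A^{-1}\rho\|_\rho^{1-p}\le|\lambda|$, and restricting the supremum to the single admissible point $\rho=\mathbf 1$ (which is quasi-interior in $L^2(\Omega)$, and $D(A)\subset L^\infty(\Omega)$ by elliptic regularity for the $p$-Laplacian on a bounded domain, so $\mathbf 1\in\mathcal Q^A_+(E)$) gives $\|(-\Delta_p)^{-1}\mathbf 1\|_\infty^{1-p}\le|\lambda|$ for every eigenvalue; in particular for $\lambda=\lambda_{\min,p}>0$ this is \eqref{eq:giosmi-p}. (Alternatively, and equivalently, one takes $\|\cdot\|_\infty$-norms on both sides of \eqref{eq:nonli-fm-p} evaluated at an $L^\infty$-normalised eigenfunction.) One should check $\lambda_{\min,p}>0$, which follows from the Poincaré-type inequality underlying coercivity of $\mathcal E$ on $W^{1,p}_0$ (resp.\ $W^{1,p}$ with the Robin term), so that $|\lambda|=\lambda$ there.

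For the Cheeger bound \eqref{eq:giosmi-p-cheeg} (Dirichlet case) the plan is to pass to the limit $p\to 1+$ in \eqref{eq:giosmi-p}: it is classical (Kawohl--Fridman, and the $\Gamma$-convergence picture for the $p$-Cheeger energies cited via \cite{GorMaz22}) that $\lambda_{\min,p}\to h(\Omega)$ as $p\to 1+$, so $h(\Omega)=\lim_{p\to1+}\lambda_{\min,p}\ge\lim_{p\to1+}\|(-\Delta_p)^{-1}\mathbf 1\|_\infty^{1-p}$, provided the latter limit exists; if it need not, one states the inequality with $\liminf$ (or $\limsup$) on the right, which is the honest reading and costs nothing. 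The main obstacle, and the only place real care is needed, is this last limit: one must justify $\lambda_{\min,p}\to h(\Omega)$ and control the behaviour of the torsion functions $(-\Delta_p)^{-1}\mathbf 1$ as $p\to1+$ — the quantity $\|(-\Delta_p)^{-1}\mathbf 1\|_\infty^{1-p}$ is a $1^\infty$-type indeterminate form and its convergence is not automatic. The cleanest route is to avoid analysing the right-hand limit directly: take $\liminf_{p\to1+}$ of both sides of \eqref{eq:giosmi-p}, use $\liminf_{p\to1+}\lambda_{\min,p}=\lim_{p\to1+}\lambda_{\min,p}=h(\Omega)$ on the left, and leave $\liminf_{p\to1+}\|(-\Delta_p)^{-1}\mathbf 1\|_\infty^{1-p}$ on the right; the displayed statement with ``$\lim$'' is then the assertion that this $\liminf$ is in fact a genuine limit, which I would either cite or relegate to the identification of $\lim_{p\to1+}(-\Delta_p)^{-1}\mathbf 1$ with the torsion function of the $1$-Laplacian. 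I would flag that everything else in the proof is a direct specialisation of the abstract results and involves no computation beyond the explicit polynomial form of $(-\Delta_p)^{-1}\mathbf 1$ in dimension one, which is used only for the numerical illustration, not for the proof.
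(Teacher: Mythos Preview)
Your proposal is correct and follows essentially the same route as the paper: verify that $A^{-1}$ is order preserving, apply \autoref{prop:landscape-abstr-nonlin} with $B=A$ and $\rho=\mathbf 1$ for \eqref{eq:nonli-fm-p}, apply \autoref{cor:spectral-estim} for \eqref{eq:giosmi-p}, and combine with Kawohl--Fridman for \eqref{eq:giosmi-p-cheeg}.

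The one substantive difference is the treatment of the comparison principle. You outsource it to references, whereas the paper gives a direct two-line proof in the style of~\cite{BobTak14}: assuming $\Delta_p u\le\Delta_p v$, one integrates $\Delta_p v-\Delta_p u$ against $(v-u)^+$, applies Gauss--Green together with the Dirichlet or Robin boundary conditions, and obtains
\[
0\le -\int_{\{v\ge u\}}(|\nabla v|^{p-2}\nabla v-|\nabla u|^{p-2}\nabla u)\cdot(\nabla v-\nabla u)\,\ud x
-\beta\int_{\{v\ge u\}\cap\partial\Omega}(|v|^{p-2}v-|u|^{p-2}u)(v-u)\,\ud\sigma\le 0,
\]
forcing $u\ge v$. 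This self-contained argument handles both boundary conditions at once and avoids hunting for a citation that covers the Robin case. For the boundedness of eigenfunctions the paper cites \cite[Theorem~4.1 and Corollary~4.2]{Le06} rather than appealing to ``elliptic regularity'' in the abstract. Finally, your careful discussion of $\lim$ versus $\liminf$ in \eqref{eq:giosmi-p-cheeg} is more scrupulous than the paper, which simply invokes~\cite[Corollary~6]{KawFri03} and passes to the limit without further comment.
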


\begin{proof}
We only have to show that $A^{-1}=-\Delta_p^{-1}$ is order preserving: that is, we assume $\Delta_p u\le \Delta_p v$ and have to show that $u\ge v$. The proof is similar to that of~\cite[Theorem~2.1]{BobTak14}: integrate $\Delta_p v-\Delta_p u$ against $(v-u)^+$ and, using the Gauss--Green formulae and the boundary conditions, obtain
\[
\begin{split}
0 \leq \int_\Omega (\Delta_p v - \Delta_p u)(v-u)^+\ud x& = -\int_{\{v \geq u \}} (\vert \nabla v \vert^{p-2} \nabla v - \vert \nabla u \vert^{p-2} \nabla u) (\nabla v - \nabla u) \ud x\\
&\qquad -\beta \int_{\{v \geq u \}\cap \partial\Omega} (\vert  v \vert^{p-2}  v - \vert  u \vert^{p-2} u) ( v - u) \ud \sigma
\leq 0
\end{split}
\]
where the last inequality holds because $-\Delta_p$ is monotone: hence
$\int_\Omega (\Delta_p v - \Delta_p u)(v-u)^+  \ud x= 0$ and therefore $u \ge v$. 
The claim follows observing that all eigenvectors of $-\Delta_p$ with Dirichlet or Robin boundary conditions are essentially bounded, see~\cite[Theorem~4.1 and Corollary~4.2]{Le06}, and applying \autoref{prop:landscape-abstr-nonlin}.

 Finally, \eqref{eq:giosmi-p} holds by~\autoref{cor:spectral-estim}, whereas \eqref{eq:giosmi-p-cheeg} follows from \eqref{eq:giosmi-p} and~\cite[Corollary~6]{KawFri03}.
\end{proof}

In the special case of Dirichlet conditions, \eqref{eq:giosmi-p} has already been obtained in~\cite[Theorem~1]{BanCar94} in the case of $p=2$ and $d=2$; and in~\cite[Lemma~4.1]{GioSmi10} for general $p$, but for domains with smooth boundary only. Also the lower estimate on $\lambda_{\min,p}$ in~\cite[Theorem~4.2]{GioSmi10} remains true under our milder assumptions on $\partial\Omega$, as its proof is only based on Schwarz symmetrisation and the estimate~\eqref{eq:giosmi-p}.

Let us now focus on the 1D case with Dirichlet boundary conditions: if $\Omega=(0,1)$, then
\[
\left[(-\Delta_p)^{-1}\mathbf{1}\right](x)=-\frac{p-1}{p}\left(\left|x-\frac{1}{2}\right|^\frac{p}{p-1}-\left(\frac{1}{2}\right)^\frac{p}{p-1}\right),\qquad x\in (0,1),
\]
as shown by a direct computation. 
Its maximum is clearly attained at $x=\frac{1}{2}$, and \eqref{eq:giosmi-p} reads
\begin{equation}\label{eq:comp-ground-p}
\lambda_{\min,p} \ge 2^p\left(\frac{p}{p-1}\right)^{p-1}.
\end{equation}
Indeed, it is well-known that $\lambda_{\min,p}=(p-1)\pi_p^{\ p}$, where $\pi_p:=\frac{2\pi}{p \sin(\frac{\pi}{p})}$: a comparison of the left and right hand sides of~\eqref{eq:comp-ground-p} is shown in~\autoref{fig:sin4-lands} (left). Also, it is easy to see that the Cheeger constant of $\Omega=(0,1)$ is $h(0,1)=2$, while~\eqref{eq:giosmi-p-cheeg} yields 
\[
h(0,1)\ge \lim_{p\to 1+}2^p\left(\frac{p}{p-1}\right)^{p-1}=2.
\]
The estimate \eqref{eq:nonli-fm-p} is also remarkable because the $p$-trigonometric function $\sin_p$ -- which in turn yields the ($\|\cdot\|_\infty$-normalized) ground state 
\[
x\mapsto \sin_p(\pi_p x)
\]
 of the $p$-Laplacian with Dirichlet conditions on $(0,1)$ -- is only defined implicitly as the inverse of 
 \[
s\mapsto \int_0^s 	(1-t^p)^{-\frac{1}{p}}\ud t,
 \]
 see~\cite[Section~2.1]{LanEdm11}; 
even numerical plots of $\sin_p$ are not entirely trivial to obtain,  cf.~\cite{Fri03,GirKot14}.
The numerical values of $\lambda_{\min,p}^\frac{1}{p-1}\cdot (-\Delta_p)^{-1}\mathbf{1}$ obtained in~\autoref{fig:sin4-lands} (right) are in good agreement  with the plots of the ground state $\sin_p$ of $\Delta_p$ obtained in \cite[Abbildung~2]{Fri03} by Matlab 5.3.

\begin{figure}[ht]
\begin{minipage}{6.5cm}
\includegraphics[width=6cm, height= 5cm]{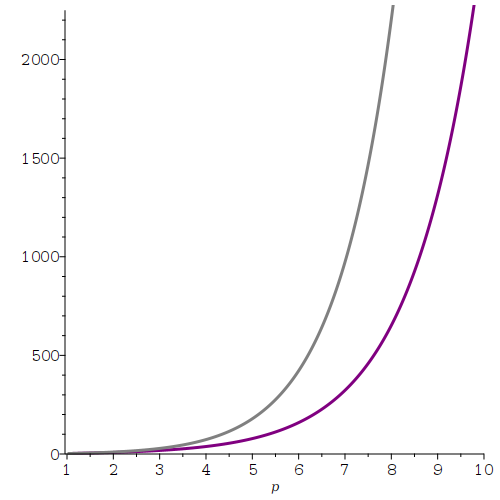}
\end{minipage}
\begin{minipage}{1pt}

\end{minipage}
\begin{minipage}{6.5cm}
\includegraphics[width=6cm, height=5cm]{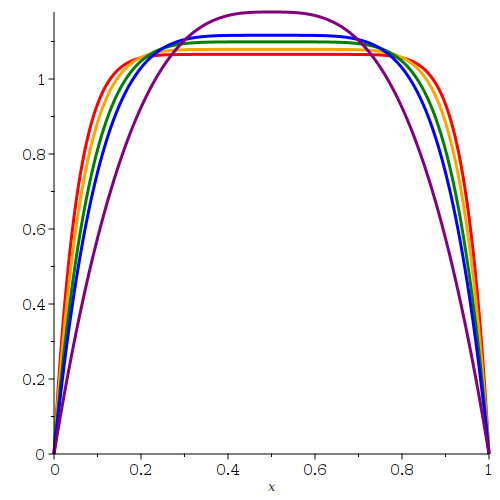}
\end{minipage}
\vspace{-15pt}
\caption{Left: The actual values of the minimal eigenvalue of the $p$-Laplacian with Dirichlet boundary conditions on $(0,1)$ (in gray) and its lower bound given by the $(1-p)$-th power of the maximum attained by the $p$-torsion function $(-\Delta_p)^{-1}\mathbf 1$ (in purple), plotted for $p\in [1,10]$.\\
Right: A plot of  $\lambda_{\min,p}^\frac{1}{p-1}\cdot (-\Delta_p)^{-1}\mathbf 1$ for 
$p=1.12$ (in red), $p=1.15$ (in orange), $p=1.2$ (in green), $p=1.25$ (in blue), and $p=1.5$ (in purple).}
\label{fig:sin4-lands}
\end{figure}

\medskip
Similar considerations hold, too, for the discrete $p$-Laplacian $\mathcal L_p$ with Dirichlet conditions on the boundary of any proper subgraph. Indeed, it is known that $\mathcal L_p$ is invertible and its inverse is order preserving, see e.g.~\cite[Theorem~A]{Par11}, so \autoref{prop:landscape-abstr-nonlin} can be applied. In the case of the $4$-Laplacian ($p$-Laplacian for $p=4$) on the unweighted path graph on 5 vertices $\mv_1,\mv_2,\mv_3,\mv_4,\mv_5$ with Dirichlet conditions at the extremal vertices $\mv_1,\mv_5$,  using symmetry arguments and running a numerical minimization on Mathematica Hua has found \cite{Hua22} that the ($\|\cdot\|\infty$-normalized) ground state $\varphi$ has the entries
\[
\varphi\approx \begin{pmatrix}
0\\
0.457504422\\
1\\
0.457504422\\
0
\end{pmatrix}
\]
with associated eigenvalue
\[
\lambda\approx 0.233665.
\]

By comparison, a numeric computation yields that the torsion function $\mathcal L_4^{-1}\mathbf 1$ for the same graph is the vector
\[
\mathcal L_4^{-1}\mathbf 1\approx \begin{pmatrix}
0\\
1.442249595\\
2.442249595\\
1.442249595\\
0
\end{pmatrix}
\]
and we derive from \autoref{prop:landscape-abstr-nonlin} the estimate
\[
\begin{pmatrix}
0\\
0.457504422\\
1\\
0.457504422\\
0
\end{pmatrix}\approx
\varphi\le \lambda^\frac{1}{3} \mathcal L_4^{-1}\mathbf 1\approx 0,615929807\cdot 
\begin{pmatrix}
0\\
1.144714243\\
1.938414769\\
1.144714243\\
0
\end{pmatrix}
\approx
 \begin{pmatrix}
0\\
0.705063623\\
1.193927435\\
0.705063623\\
0
\end{pmatrix}.
\]

\subsection{Magnetic Schrödinger operators}\label{sec:magn-schr-stein}
Let us finally provide a local bound for the eigenvectors of Schrödinger operators with both electric and magnetic potential $V$ and $a$, respectively: they are formally given by 
\[
H_{a,V}f:=(i\nabla +a)^2f+Vf.
\]
 Precisely introducing these operators is technical, but nowadays standard: based on quadratic form methods, on bounded open sets $\Omega\subset \R^d$ the realizations $H^{\mathrm{D}}_{a,V}$ (resp., $H^{\mathrm{N}}_{a,V}$) of  $H_{a,V}$ with Dirichlet (resp., Neumann) conditions can be defined as self-adjoint operators on $L^2(\Omega)$ whenever $a\in L^2_{\rm{loc}}(\Omega)$, $V=V_+-V_-$ with $V_+\in L^1_{\rm{loc}}(\Omega)$ and $V_-$ relatively form bounded with respect to the free Laplacian with Dirichlet (resp., Neumann) conditions, with relative bound $<1$.
Details can be found in~\cite{HunSim04} and references therein.

In 1D, finding a landscape function for such operators is trivial, since the magnetic potential can be gauged away by the local unitary transformation $U:f\mapsto \e^{i\int_0^\cdot a(s)\ud s}f$.
 Accordingly, the eigenvectors of the magnetic Schrödinger operators have, at any point $x$, same modulus as their non-magnetic counterparts and \autoref{prop:landscape-abstr} yields no novel information in this case. 
 
 In higher dimension, though, the issue of bounding eigenvectors of magnetic Schrödinger operators is subtler: 
 the torsion function $(H_{0,V})^{-1}\mathbf{1}$ of the non-magnetic Schrödinger operator $H_{0,V}$ has been proved in~\cite[Theorem~1]{HosQuaSte22} to be a landscape function for \textit{all} Schrödinger operators $H_{a,V}$ with same electric potential but arbitrary magnetic potential, and a corresponding parabolic landscape function has been obtained in~\cite[Theorem~2]{HosQuaSte22}.
 The proof is based on stochastic arguments, and in particular on a Feynman--Kac-type formula: unsurprisingly, all results in \cite{HosQuaSte22} hold under smoothness assumptions on $a,V$ and the boundary of $\partial \Omega$; only Dirichlet conditions and $V_-=0$ are allowed. Let us remove these restrictions and state the following general result.
 
\begin{prop}\label{prop:magn-land-appl-}
Let $\Omega\subset \R^d$ be a bounded open domain. Let the potentials $a,V$ be real-valued and such that $a\in L^2_{\rm{loc}}(\Omega)$, $V=V_+-V_-$ with $V_+\in L^1_{\rm{loc}}(\Omega)$, and let $V_-$ relatively form bounded, with relative bound $<1$, with respect to $H^{\mathrm{D}}_{0,0}$ (resp., $H^{\mathrm{N}}_{0,0}$). 
Then each eigenpair $(\lambda,\varphi)$ of $H^{\mathrm{D}}_{a,V}$ (resp., of $-H^{\mathrm{N}}_{a,V}$) satisfies
\begin{equation}\label{eq:magnetic-nonmagnetic-estim-Rd-d}
\frac{|\varphi(x)|}{\|\varphi\|_\infty}\le\inf_{\mu\ge 0} |\lambda+\mu| (\mu+H^{\mathrm{D}}_{0,V})^{-1}\mathbf 1(x)\qquad \hbox{ for a.e.\ }x\in \Omega
\end{equation}
\begin{equation}\label{eq:magnetic-nonmagnetic-estim-Rd-n}
\hbox{(resp.,}\quad \frac{|\varphi(x)|}{\|\varphi\|_\infty}\le \inf_{\mu>0}|\lambda+\mu| (\mu+H^{\mathrm{N}}_{0,V})^{-1}\mathbf 1(x)\qquad \hbox{ for a.e.\ }x\in \Omega.)
\end{equation}

Also, 
\begin{equation}\label{eq:magnetic-nonmagnetic-estim-Rd-d-parab}
\frac{|\varphi(x)|}{\|\varphi\|_\infty}\le \inf_{t>0}\e^{t \lambda}  \e^{-tH^{\mathrm{D}}_{0,V}} \mathbf{1}(x)\qquad \hbox{ for a.e.\ }x\in \Omega
\end{equation}
\begin{equation}\label{eq:magnetic-nonmagnetic-estim-Rd-n-parab}
\hbox{(resp.,}\quad \frac{|\varphi(x)|}{\|\varphi\|_\infty}\le \inf_{t>0}\e^{t \lambda}  \e^{-tH^{\mathrm{N}}_{0,V}} \mathbf{1}(x)\qquad \hbox{ for a.e.\ }x\in \Omega.)
\end{equation}
The above estimates remain true if the landscape functions in the right hand sides are replaced by $ (\mu + H^{\mathrm{D}}_{0,-V_-})^{-1}\mathbf{1}$ and $ (\e^{-t H^{\mathrm{D}}_{0,-V_-}})\mathbf{1}$, respectively
 (resp., $ (\mu+H^{\mathrm{N}}_{0,-V_-})^{-1}\mathbf{1}$ and $(\e^{-t H^{\mathrm{N}}_{0,-V_-}})\mathbf{1}$, respectively).

Furthermore, the bottom $\lambda_{*,a}$ of the spectrum of $H^{\mathrm{D}}_{a,V}$ satisfies
\begin{equation}\label{eq:comparison-bottom-spectrum-magnetic}
1\le \inf_{\mu\ge 0} |\lambda_{*,a,V}+\mu|  
 \|(\mu +H^{\mathrm{D}}_{0,V})^{-1}\mathbf 1\|_\infty.
\end{equation}
\end{prop}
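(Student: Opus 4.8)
The plan is to deduce \autoref{prop:magn-land-appl-} from the abstract domination results already at our disposal — \autoref{cor:landscape-abstr-domin} for the pointwise eigenvector bounds and \autoref{cor:pseudo-giosmi-linear} for the eigenvalue bound — applied on $E=L^2(\Omega)$ with $\rho=\mathbf 1$. Since $\Omega$ is bounded, $\mathbf 1\in L^2(\Omega)$ is a quasi-interior point, $E_{\mathbf 1}=L^\infty(\Omega)$, and the gauge norm $\|\cdot\|_{\mathbf 1}$ is just $\|\cdot\|_\infty$; moreover $-H^{\mathrm{D}}_{a,V}$, $-H^{\mathrm{D}}_{0,V}$, $-H^{\mathrm{D}}_{0,-V_-}$ (and the Neumann counterparts) all generate analytic $C_0$-semigroups, being self-adjoint and bounded below. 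The single analytic ingredient is a chain of semigroup-domination inequalities
\[
\bigl|\e^{-tH^{\mathrm{D}}_{a,V}}f\bigr|\le\e^{-tH^{\mathrm{D}}_{0,V}}|f|\le\e^{-tH^{\mathrm{D}}_{0,-V_-}}|f|\qquad\text{a.e.\ in }\Omega,\ t\ge0,\ f\in L^2(\Omega),
\]
and its analogue with Neumann conditions. The first inequality is the Kato--Simon diamagnetic inequality, in the form valid for the Dirichlet (resp.\ Neumann) realization on a bounded domain, which I would quote from~\cite{HunSim04,MelOuhRoz04}. The second is monotonicity of Schrödinger semigroups in the electric potential: adding the positive part $V_+\ge0$ can only shrink the positive semigroup. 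At the stated regularity $V_+\in L^1_{\mathrm{loc}}(\Omega)$ this is not immediate; I would prove it by truncating $V_+$ to $V_+\wedge n$, applying the form-domination criteria of Ouhabaz~\cite{Ouh05} for the bounded potentials $V_+\wedge n$, and passing to the monotone limit $n\to\infty$ via monotone convergence of the associated quadratic forms.

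Granting the displayed chain, the dominating semigroups $(\e^{-tH^{\mathrm{D}}_{0,V}})_{t\ge0}$ and $(\e^{-tH^{\mathrm{D}}_{0,-V_-}})_{t\ge0}$ are positive, so \autoref{cor:landscape-abstr-domin} applies with $A:=H^{\mathrm{D}}_{a,V}$ and $B:=H^{\mathrm{D}}_{0,V}$ or $B:=H^{\mathrm{D}}_{0,-V_-}$. Then \eqref{eq:magnetic-nonmagnetic-estim-Rd-d-parab}--\eqref{eq:magnetic-nonmagnetic-estim-Rd-n-parab} are exactly \eqref{eq:landscape-steiner-domin-semig}, and \eqref{eq:magnetic-nonmagnetic-estim-Rd-d}--\eqref{eq:magnetic-nonmagnetic-estim-Rd-n} come from \eqref{eq:landscape-superabstr-pos-domin-resolv} through the Laplace-transform step in the proof of \autoref{cor:landscape-abstr-domin}, which promotes domination of semigroups to $(\real\mu+B)^{-1}\ge\bigl|(\mu+A)^{-1}\bigr|$; the index set is $\mu\ge0$ in the Dirichlet case and $\mu>0$ in the Neumann case, the latter because constants lie in the kernel of $H^{\mathrm{N}}_{0,0}$, so that $\mu=0$ must be excluded. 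The replacement of the dominating operator by $H^{\mathrm{D}}_{0,-V_-}$ (resp.\ $H^{\mathrm{N}}_{0,-V_-}$) is the same argument using the second inequality of the chain. Finally, \eqref{eq:comparison-bottom-spectrum-magnetic} follows from \autoref{cor:pseudo-giosmi-linear}.(1): boundedness of $\Omega$ makes $H^{\mathrm{D}}_{a,V}$ have compact resolvent, so $\lambda_{*,a,V}$ is a genuine (hence approximate) eigenvalue, $\mathbf 1\in{\mathcal Q}^{H^{\mathrm{D}}_{a,V}}_+(L^2(\Omega))$, and $(\mu+H^{\mathrm{D}}_{0,V})^{-1}$ is an admissible dominating operator $A^{(-1)}_{+,\mu}$ there.

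I expect two steps to require genuine care. The first is the monotone-limit justification of $\e^{-tH^{\mathrm{D}}_{0,V}}\le\e^{-tH^{\mathrm{D}}_{0,-V_-}}$, i.e.\ the convergence of the truncated forms under merely $L^1_{\mathrm{loc}}$ potentials. The second — and, I think, the main obstacle — is verifying the membership conditions demanded by the abstract statements: \autoref{cor:landscape-abstr-domin} needs every eigenvector $\varphi$ of $H^{\mathrm{D}}_{a,V}$ to lie in $L^\infty(\Omega)=E_{\mathbf 1}$, and \autoref{cor:pseudo-giosmi-linear} needs $D(H^{\mathrm{D}}_{a,V})\subset L^\infty(\Omega)$. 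Both follow once $(\e^{-tH^{\mathrm{D}}_{0,-V_-}})_{t>0}$ is ultracontractive: then $|\varphi|=\e^{t\lambda}\bigl|\e^{-tH^{\mathrm{D}}_{a,V}}\varphi\bigr|\le\e^{t\lambda}\e^{-tH^{\mathrm{D}}_{0,-V_-}}|\varphi|\in L^\infty(\Omega)$ (recall $\lambda$ is real by self-adjointness), and likewise $D(H^{\mathrm{D}}_{a,V})=\mathrm{Ran}\bigl((1+H^{\mathrm{D}}_{a,V})^{-1}\bigr)\subset L^\infty(\Omega)$. Since $-V_-$ is a form-bounded perturbation, with relative bound $<1$, of the free Dirichlet (resp.\ Neumann) Laplacian on the bounded domain $\Omega$, the $L^2\to L^\infty$ smoothing is inherited from the free heat semigroup (via a Faber--Krahn or Sobolev inequality on $\Omega$); assembling this bookkeeping cleanly, rather than the abstract reduction, is where the work lies.
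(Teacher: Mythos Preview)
Your proposal is correct and follows essentially the same route as the paper: invoke the Kato--Simon diamagnetic inequality (the paper cites \cite[Theorem~1.1 and Remark~1.2.(iii)]{HunSim04}) to obtain semigroup domination, then apply \autoref{cor:landscape-abstr-domin} for the eigenvector bounds and \autoref{cor:pseudo-giosmi-linear} for the eigenvalue bound. You are considerably more explicit than the paper about the technical prerequisites---the $L^\infty$-membership of eigenvectors via ultracontractivity, and the monotonicity $\e^{-tH_{0,V}}\le\e^{-tH_{0,-V_-}}$ via truncation---which the paper's two-line proof leaves implicit; note, incidentally, that for \eqref{eq:comparison-bottom-spectrum-magnetic} you do not actually need the full domain inclusion $D(H^{\mathrm{D}}_{a,V})\subset L^\infty(\Omega)$ demanded by \autoref{cor:pseudo-giosmi-linear}, since $\lambda_{*,a,V}$ is a genuine eigenvalue and the bound follows directly by taking $\|\cdot\|_\infty$ of \eqref{eq:magnetic-nonmagnetic-estim-Rd-d} once the ground state is known to be bounded.
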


\begin{proof}
To obtain the first four bounds, combine \cite[Theorem~1.1 and Remark~1.2.(iii)]{HunSim04} with~\autoref{cor:landscape-abstr-domin}.

The last estimate follows from \autoref{cor:pseudo-giosmi-linear}.
\end{proof}

In particular, $\lambda_{*,a}$  satisfies
\begin{equation}\label{eq:comparison-bottom-spectrum-magnetic-old}
|\lambda_{*,a,V}|\ge \frac{1}{\|(H^{\mathrm{D}}_{0,V})^{-1} \mathbf{1}\|_\infty},
\end{equation}
which already follows from \cite[Lemma~7.2.2]{Hel88} and \cite[Theorem~1]{BanCar94}.

\medskip
Likewise, we obtain a discrete version of \autoref{prop:magn-land-appl-}: we follow the terminology and notation  in \autoref{rem:modulusetc}.(3). In particular, given any graph and any magnetic signature $\alpha$ we denote by $\mathcal L_\alpha$ the corresponding magnetic Laplacian; and, especially, by $\mathcal L=\mathcal L_0$ and $\mathcal Q=\mathcal L_\pi$ the standard Laplacian and signless Laplacian, respectively. The realizations of $\mathcal L_\alpha$ (and $\mathcal L,\mathcal Q$) with Dirichlet conditions at the boundary of some proper subgraphs will be denoted by $\mathcal L^{\mathrm{D}}_\alpha$ (and, especially, $\mathcal L^{\mathrm{D}},\mathcal Q^{\mathrm{D}}$); it is well-known that they are invertible. All these operators are self-adjoint, and so are their versions with non-trivial, real-valued electric potential $\mathcal L_\alpha+V,\mathcal L+V,\mathcal Q+V$.

\begin{prop}\label{prop:magn-land-appl-discr}
Let $\mG=(\mV,\mE,\nu,\mu)$ be a finite, weighted, undirected graph, and let $\mG'$ some proper subgraph.
Then  for any real-valued potential $V=V_+ -V_-$ each eigenpair $(\lambda,\varphi)$ of $\mathcal L^{\mathrm{D}}_\alpha+V$ satisfies both
\begin{equation}\label{eq:magnetic-nonmagnetic-estim-discr-d}
\frac{|\varphi(\mv)|}{\|\varphi\|_\infty}\le \left[\inf_{\mu\ge 0} |\lambda+\mu| (\mu+\mathcal L^{\mathrm{D}}-V_-)^{-1}\mathbf 1\right](\mv)\qquad \hbox{ for all }\mv\in \mV
\end{equation}
and
\begin{equation}\label{eq:magnetic-nonmagnetic-estim-discr-d-signless}
\frac{|\phi(\mv)|}{\|\phi\|_\infty}\le \left[\inf_{\mu\ge 0} |\lambda_{\max}-\lambda+\mu| (\mu+\lambda_{\max}-\mathcal Q^{\mathrm{D}}-V_+)^{-1}\mathbf 1\right](\mv)\qquad \hbox{ for all }\mv\in \mV,
\end{equation}
where $\lambda_{\max}$ denotes the largest eigenvalue of $\mathcal L^{\mathrm{D}}_\alpha+V$.
\end{prop}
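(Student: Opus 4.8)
The plan is to derive \eqref{eq:magnetic-nonmagnetic-estim-discr-d} and \eqref{eq:magnetic-nonmagnetic-estim-discr-d-signless} from \autoref{cor:landscape-abstr-domin} applied twice: once to the operator $A:=\mathcal L^{\mathrm{D}}_\alpha+V$ directly, and once to a reflected (``dual-landscape'') version of it. Throughout we work on $E=\ell^2(\mV',\nu)$, where $\mV'$ is the interior vertex set of $\mG'$, and take $\rho=\mathbf 1$, which is a quasi-interior point so that $E_\rho=\ell^\infty(\mV',\nu)$ and $\|\cdot\|_\rho=\|\cdot\|_\infty$; since the graph is finite all operators are bounded matrices, hence generate (norm-continuous) $C_0$-semigroups, and all eigenvectors automatically lie in $E_\rho$.

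First I would establish the semigroup domination $|\e^{-t(\mathcal L^{\mathrm{D}}_\alpha+V)}|\le \e^{-t(\mathcal L^{\mathrm{D}}-V_-)}$. By \autoref{rem:modulusetc}.(3), the magnetic Laplacian $\mathcal L_\alpha$ has modulus semigroup generated by $-\mathcal L$, i.e.\ $(-\mathcal L_\alpha)^\sharp=-\mathcal L$; restricting to the proper subgraph with Dirichlet conditions preserves this (it is the compression to the principal ideal of interior vertices), so $(-\mathcal L^{\mathrm{D}}_\alpha)^\sharp=-\mathcal L^{\mathrm{D}}$. Adding the electric potential, $|{-}(\mathcal L^{\mathrm{D}}_\alpha+V)f|\le \mathcal L^{\mathrm{D}}|f|+|V||f|=(\mathcal L^{\mathrm{D}}+V_+ +V_-)|f|$ on the diagonal level, but for the \emph{semigroup} domination one uses the Trotter product formula together with $|V|=V_+ +V_-$ and the fact that the \emph{negative} part $-V_+$ only helps; more precisely $\e^{-t(\mathcal L^{\mathrm{D}}_\alpha+V)}$ is dominated by $\e^{-t(\mathcal L^{\mathrm{D}}-V_-)}$ because $(\mathcal L^{\mathrm{D}}-V_-)$ is obtained from $\mathcal L^{\mathrm{D}}_\alpha+V$ by replacing $\alpha$ by $0$ (which dominates, by the diamagnetic inequality on graphs) and dropping $V_+\ge 0$ (which only increases the semigroup). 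Then $-(\mathcal L^{\mathrm{D}}-V_-)$ generates a positive semigroup (it is a perturbation of $-\mathcal L^{\mathrm{D}}$ by a diagonal operator), so \autoref{cor:landscape-abstr-domin}, specifically \eqref{eq:landscape-superabstr-pos-domin-resolv} with $B:=\mathcal L^{\mathrm{D}}-V_-$, applies and yields \eqref{eq:magnetic-nonmagnetic-estim-discr-d} after noting that for real $\mu\ge 0$ and real $\lambda$ the quantity $|\real(\lambda+\mu)|$ equals $|\lambda+\mu|$, and restricting the infimum to the range $\mu\ge 0>\max\{s(-A),s(-B)\}$ guaranteed by invertibility of the Dirichlet realizations.

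For \eqref{eq:magnetic-nonmagnetic-estim-discr-d-signless} I would apply the same machinery to the ``flipped'' operator. Set $\widetilde A:=\lambda_{\max}I-(\mathcal L^{\mathrm{D}}_\alpha+V)$, where $\lambda_{\max}$ is the top eigenvalue of $\mathcal L^{\mathrm{D}}_\alpha+V$; then $(\lambda,\varphi)$ is an eigenpair of $\mathcal L^{\mathrm{D}}_\alpha+V$ if and only if $(\lambda_{\max}-\lambda,\varphi)$ is an eigenpair of $\widetilde A$, and $\lambda_{\max}-\lambda\ge 0$. The key point, again from \autoref{rem:modulusetc}.(3), is that the modulus semigroup of $(\e^{t\mathcal L^{\mathrm{D}}_\alpha})_{t\ge0}$ is $(\e^{t\mathcal Q^{\mathrm{D}}})_{t\ge0}$, i.e.\ $(\mathcal L^{\mathrm{D}}_\alpha)^\sharp=\mathcal Q^{\mathrm{D}}$; equivalently, $-\widetilde A=\mathcal L^{\mathrm{D}}_\alpha+V-\lambda_{\max}$ has its semigroup dominated by that of $\widetilde B:=\lambda_{\max}I-\mathcal Q^{\mathrm{D}}-V_+$ (pass from $\alpha$ to the signless case $\pi$, which dominates from above, and drop $V_-\ge0$). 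One checks $-\widetilde B$ generates a positive semigroup and that $\widetilde B$ is invertible for a suitable shift, so \eqref{eq:landscape-superabstr-pos-domin-resolv} applied to $(\widetilde A,\widetilde B)$ with eigenvalue $\lambda_{\max}-\lambda$ gives exactly \eqref{eq:magnetic-nonmagnetic-estim-discr-d-signless} (with $\phi$ the same eigenvector as $\varphi$, the change of letter being cosmetic).

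The main obstacle is the careful justification of the two semigroup domination statements in the presence of a signed electric potential $V=V_+-V_-$: one must argue that adding $+V_+$ to a positive-semigroup generator on the dominating side while keeping $-V_-$ is legitimate, and that the diamagnetic comparison $|\e^{-t\mathcal L^{\mathrm{D}}_\alpha}|\le\e^{-t\mathcal L^{\mathrm{D}}}$ (resp.\ $|\e^{t\mathcal L^{\mathrm{D}}_\alpha}|\le\e^{t\mathcal Q^{\mathrm{D}}}$) survives both the Dirichlet compression and the diagonal perturbation. On a finite graph this is elementary via the Trotter product formula $\e^{-t(\mathcal L^{\mathrm{D}}_\alpha+V)}=\lim_n(\e^{-\frac{t}{n}\mathcal L^{\mathrm{D}}_\alpha}\e^{-\frac{t}{n}V})^n$, applying the modulus inequality entrywise at each factor and passing to the limit; the only care needed is the sign bookkeeping that converts $|V|$ into $V_+ +V_-$ on the first estimate and into $V_+ +V_-$ again (but now subtracted from $\lambda_{\max}$, hence the appearance of $-V_+$) on the second. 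Once these dominations are in hand, the two displayed bounds are immediate specializations of \autoref{cor:landscape-abstr-domin}.
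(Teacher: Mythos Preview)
Your proposal is correct and follows essentially the same route as the paper: both inequalities are obtained from \autoref{cor:landscape-abstr-domin} via the two semigroup dominations
\[
|\e^{-t(\mathcal L^{\mathrm{D}}_\alpha+V)}|\le \e^{-t(\mathcal L^{\mathrm{D}}-V_-)}\quad\hbox{and}\quad |\e^{t(\mathcal L^{\mathrm{D}}_\alpha+V)}|\le \e^{t(\mathcal Q^{\mathrm{D}}+V_+)},
\]
together with the observation that $(\lambda_{\max}-\lambda,\varphi)$ is an eigenpair of $\lambda_{\max}-(\mathcal L^{\mathrm{D}}_\alpha+V)$. Your Trotter-product justification of the dominations is a reasonable elaboration of what the paper simply cites from \autoref{rem:modulusetc}.(2)--(3), but the underlying argument is the same.
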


\begin{proof}
The assertions are an immediate consequence of the \autoref{cor:landscape-abstr-domin}, since
\[
\e^{-t(\mathcal L^{\mathrm{D}}_\alpha+V)}\le \e^{-t(\mathcal L^{\mathrm{D}}-V_-)}\quad\hbox{and}\quad \e^{t(\mathcal L^{\mathrm{D}}_\alpha+V)}\le \e^{t(\mathcal Q^{\mathrm{D}}+V_+)}\qquad\hbox{for all }t\ge 0,
\]
 in view of the domination properties recalled in \autoref{rem:modulusetc}.(3) and of the fact that $(\lambda_{\max}-\lambda,\varphi)$ is an eigenpair of $\lambda_{\max}-\mathcal{L}^{\mathrm{D}_\alpha}+V$.
\end{proof}

Even restricting to $\mu=0$, \eqref{eq:magnetic-nonmagnetic-estim-discr-d} sharpens the main estimate in~\cite{LemPacOvd20}, as $(\mathcal L^{\mathrm{D}}_{0})^{-1}$ is, by construction, dominated by the inverse of Ostrowski’s comparison matrix introduced in~\cite[Equation (3)]{LemPacOvd20}; while \eqref{eq:magnetic-nonmagnetic-estim-discr-d-signless} should be compared with the  high-energy eigenvectors bound by means of the dual landscape function obtained in \cite[Corollary~4]{LyrMayFil15}, which however only discusses  non-magnetic Laplacians on path graphs.

\bibliographystyle{plain}

\begin{thebibliography}{10}

\bibitem{AndMazRos10}
F.~Andreu-Vaillo, J.M. Maz{\'o}n, J.D. Rossi, and J.J. Toledo-Melero.
\newblock {\em {Nonlocal Diffusion Problems}}, volume 165 of {\em Math.\
  Surveys and Monographs}.
\newblock Amer.\ Math.\ Soc., Providence, RI, 2010.

\bibitem{AreBuk94}
W.\ Arendt and A.V.\ Bukhvalov.
\newblock Integral representations of resolvents and semigroups.
\newblock {\em Forum Math.}, 6:111--135, 1994.

\bibitem{ArnDavFil19}
D.N. Arnold, G.~David, M.~Filoche, D.~Jerison, and S.~Mayboroda.
\newblock Localization of eigenfunctions via an effective potential.
\newblock {\em Comm.\ Partial Differ.\ Equations}, 44:1186--1216, 2019.

\bibitem{AroGlu22c}
S.~Arora and J.~Gl{\"u}ck.
\newblock Criteria for eventual domination of operator semigroups and
  resolvents.
\newblock {\em arXiv:2204.00146}, 2022.

\bibitem{BanCar94}
R.~Ba\~{n}uelos and T.~Carroll.
\newblock Brownian motion and the fundamental frequency of a drum.
\newblock {\em Duke Math.\ J.}, 75:575--602, 1994.

\bibitem{BalLiaGal20}
S.~Balasubramanian, Y.~Liao, and V.~Galitski.
\newblock Many-body localization landscape.
\newblock {\em Phys.\ Rev.\ B}, 101:014201, 2020.

\bibitem{BenCra91}
Ph. B{\'e}nilan and M.G. Crandall.
\newblock Completely accretive operators.
\newblock In P.\ Cl{\'e}ment, B.\ {de Pagter}, and E.\ Mitidieri, editors, {\em
  Semigroup {T}heory and {E}volution {E}quations}, volume 135 of {\em Lect.\
  Notes Pure Appl.\ Math.}, pages 41--75. Dekker, Delft, 1991.

\bibitem{Ber12}
{M. van den} Berg.
\newblock Estimates for the torsion function and {S}obolev constants.
\newblock {\em Potential Analysis}, 36:607--616, 2012.

\bibitem{BifKer24}
P.~Bifulco and J.~Kerner.
\newblock Comparing the spectrum of {S}chr{\"o}dinger operators on quantum
  graphs.
\newblock {\em Proc.\ Amer.\ Math.\ Soc.}, 152:295--306, 2024.

\bibitem{BobTak14}
V.E. Bobkov and P.~Tak{\'a}{\v{c}}.
\newblock A strong maximum principle for parabolic equations with the
  $p$-{L}aplacian.
\newblock {\em J.\ Math.\ Anal.\ Appl.}, 419:218--230, 2014.

\bibitem{Bog05}
T.~Boggio.
\newblock Sulle funzioni di {G}reen d’ordine $m$.
\newblock {\em Rend.\ Circ.\ Mat.\ Palermo}, 20:97--135, 1905.

\bibitem{Bovden16}
A.~Bovier and F.~Den~Hollander.
\newblock {\em Metastability: a potential-theoretic approach}, volume 351 of
  {\em Grundlehren der mathematischen Wissenschaften}.
\newblock Springer, 2016.

\bibitem{BovGayKle05}
A.~Bovier, V.~Gayrard, and M.~Klein.
\newblock {Metastability in reversible diffusion processes II: Precise
  asymptotics for small eigenvalues}.
\newblock {\em J.\ European Math.\ Soc.}, 7:69--99, 2005.

\bibitem{Bre73}
H.\ Br{\'e}zis.
\newblock {\em {Operateurs Maximaux Monotones et Semi-Groupes de Contractions
  dans les Espaces de Hilbert}}.
\newblock North-Holland, Amsterdam, 1973.

\bibitem{BunBur20}
L.~Bungert and M.~Burger.
\newblock Asymptotic profiles of nonlinear homogeneous evolution equations of
  gradient flow type.
\newblock {\em J.\ Evol.\ Equ.}, 20:1061--1092, 2020.

\bibitem{ChiHauKen15}
R.~Chill, D.~Hauer, and J.~Kennedy.
\newblock Nonlinear semigroups generated by $j$-elliptic functionals.
\newblock {\em J.\ Math.\ Pures Appl.}, 105:415--450, 2014.

\bibitem{CipGri03}
F.\ Cipriani and G.~Grillo.
\newblock {Nonlinear Markov semigroups, nonlinear Dirichlet forms and
  applications to minimal surfaces}.
\newblock {\em J.\ Reine Ang.\ Math.}, 562:201--235, 2003.

\bibitem{ClePel79}
Ph. Cl{\'e}ment and L.A. Peletier.
\newblock An anti-maximum principle for second-order elliptic operators.
\newblock {\em J.\ Differ.\ Equ.}, 34:218--229, 1979.

\bibitem{ColTorTru11b}
Y.\ Colin~de Verdi\`ere, N.\ Torki-Hamza, and F.~Truc.
\newblock {Essential self-adjointness for combinatorial Schr\"odinger
  operators. III: Magnetic fields}.
\newblock {\em Ann. Fac. Sci. Toulouse}, 20:599--611, 2011.

\bibitem{DanGluKen16b}
D.~Daners, J.~Gl{\"u}ck, and J.B. Kennedy.
\newblock Eventually and asymptotically positive semigroups on {B}anach
  lattices.
\newblock {\em J.\ Differ.\ Equ.}, 261:2607--2649, 2016.

\bibitem{Dav89}
E.B. Davies.
\newblock {\em Heat {K}ernels and {S}pectral {T}heory}, volume~92 of {\em
  Cambridge Tracts Math.}
\newblock Cambridge Univ.\ Press, Cambridge, 1989.

\bibitem{DelRos16}
L.M. Del~Pezzo and J.D. Rossi.
\newblock The first eigenvalue of the $p$-{L}aplacian on quantum graphs.
\newblock {\em Analysis and Math.\ Phys.}, 6:365--391, 2016.

\bibitem{DelGomVaz21}
F.~del Teso, D.~G{\'o}mez-Castro, and J.L. V{\'a}zquez.
\newblock Three representations of the fractional $p$-{L}aplacian: semigroup,
  extension and {B}alakrishnan formulas.
\newblock {\em Fract.\ Calc.\ Appl.\ Anal.}, 24:966--1002, 2021.

\bibitem{DonVar75}
M.D. Donsker and S.R.S. Varadhan.
\newblock On a variational formula for the principal eigenvalue for operators
  with maximum principle.
\newblock {\em Proc.\ Natl.\ Acad.\ Sci.\ USA}, 72:780--783, 1975.

\bibitem{DonVar76}
M.D. Donsker and S.R.S. Varadhan.
\newblock On the principal eigenvalue of second-order elliptic differential
  operators.
\newblock {\em Comm.\ Pure Appl.\ Math.}, 29:595--621, 1976.

\bibitem{EngNag00}
K.-J.\ Engel and R.\ Nagel.
\newblock {\em One-{P}arameter {S}emigroups for {L}inear {E}volution
  {E}quations}, volume 194 of {\em Graduate Texts in Mathematics}.
\newblock Springer-Verlag, New York, 2000.

\bibitem{FerKaw09}
V.~Ferone and B.~Kawohl.
\newblock Remarks on a finsler-laplacian.
\newblock {\em Proc.\ Amer.\ Math.\ Soc.}, 137:247--253, 2009.

\bibitem{FilMay12}
M.~Filoche and S.~Mayboroda.
\newblock Universal mechanism for {A}nderson and weak localization.
\newblock {\em Proc.\ Natl.\ Acad.\ Sci.\ USA}, 109:14761--14766, 2012.

\bibitem{FilMay12b}
M.~Filoche and S.~Mayboroda.
\newblock Universal mechanism for {A}nderson and weak localization --
  supporting information: Mathematical proofs.
\newblock
  http://www.pnas.org/lookup/suppl/doi:10.1073/pnas.1120432109/-/DCSupplemental/Appendix.pdf,
  2012.

\bibitem{FilMayTao21}
M.~Filoche, S.~Mayboroda, and T.~Tao.
\newblock The effective potential of an {$M$}-matrix.
\newblock {\em J.\ Math.\ Phys.}, 62:041902, 2021.

\bibitem{Fri03}
V.~Fridman.
\newblock {\em {Das Eigenwertproblem zum $p$-Laplace Operator f{\"u}r $p$ gegen
  1}}.
\newblock PhD thesis, Universit{\"a}t zu K{\"o}ln, 2003.

\bibitem{FucNecSou73}
S.~Fu\v{c}ík, J.~Ne\v{c}as, J.~Sou\v{c}ek, and V.~Sou\v{c}ek.
\newblock {\em {Spectral Analysis of Nonlinear Operators}}, volume 346 of {\em
  Lect.\ Notes Math.}
\newblock Springer-Verlag, Berlin, 1973.

\bibitem{GioSmi10}
T.~Giorgi and R.G. Smits.
\newblock Principal eigenvalue estimates via the supremum of torsion.
\newblock {\em Indiana Univ.\ Math.\ J.}, 59:987--1011, 2010.

\bibitem{GirKot14}
P.~Girg and L.~Kotrla.
\newblock Differentiability properties of $p$-trigonometric functions.
\newblock {\em Electronic J.\ Differ.\ Equ.}, 21:101--127, 2014.

\bibitem{Glu22}
J.~Gl{\"u}ck.
\newblock Evolution equations with eventually positive solutions.
\newblock {\em EMS Magazine}, pages 4--11, 2022.

\bibitem{GluMug21}
J.~Gl{\"u}ck and D.~Mugnolo.
\newblock Eventual domination for linear evolution equations.
\newblock {\em Math.\ Z.}, 299:1421--1433, 2021.

\bibitem{GorMaz22}
W.~Górny and J.M. Mazón.
\newblock On the $p$-{L}aplacian evolution equation in metric measure spaces.
\newblock arXiv:2103.13373.

\bibitem{HarMal20}
E.M. Harrell and A.V. Maltsev.
\newblock Localization and landscape functions on quantum graphs.
\newblock {\em Trans.\ Amer.\ Math.\ Soc.}, 373:1701--1729, 2020.

\bibitem{Hel88}
B.~Helffer.
\newblock {\em Semi-classical analysis for the {S}chrödinger operator and
  applications}, volume 1336 of {\em Lect.\ Notes Math.}
\newblock Springer-Verlag, Berlin, 1988.

\bibitem{HisSig96}
P.D. Hislop and I.M. Sigal.
\newblock {\em Introduction to spectral theory: with applications to
  {S}chrödinger operators}, volume 113 of {\em Appl.\ Math.\ Sci.}
\newblock Springer-Verlag, New York, 1996.

\bibitem{HosQuaSte22}
J.G. Hoskins, H.~Quan, and S.~Steinerberger.
\newblock Magnetic {S}chrödinger operators and landscape functions.
\newblock arXiv:2210.02646.

\bibitem{Hua22}
B.~Hua.
\newblock Private communication, 2022.

\bibitem{HunSim04}
D.~Hundertmark and B.~Simon.
\newblock A diamagnetic inequality for semigroup differences.
\newblock {\em J.\ Reine Angew.\ Math.}, 571:107--130, 2004.

\bibitem{Kaw20}
B.~Kawohl.
\newblock {How I met the normalized $p$-Laplacian $\Delta_p^N$ and what we know
  now about mean values and concavity properties}.
\newblock {\em Journal of Elliptic and Parabolic Equations}, 6:113--121, 2020.

\bibitem{KawFri03}
B.~Kawohl and V.~Fridman.
\newblock {Isoperimetric estimates for the first eigenvalue of the $p$-Laplace
  operator and the Cheeger constant}.
\newblock {\em Comment. Math. Univ. Carolin.}, 44:659--667, 2003.

\bibitem{LanEdm11}
J.\ Lang and D.~Edmunds.
\newblock {\em {Eigenvalues, Embeddings, and Generalised Trigonometric
  Functions}}, volume 2016 of {\em Lect.\ Notes Math.}
\newblock Springer-Verlag, Berlin, 2011.

\bibitem{Le06}
A.~L{\^e}.
\newblock Eigenvalue problems for the $p$-{L}aplacian.
\newblock {\em Nonlinear Anal., Theory Methods Appl.}, 64:1057--1099, 2006.

\bibitem{LemPacOvd20}
G.~Lemut, M.J. Pacholski, O.~Ovdat, A.~Grabsch, J.~Tworzyd{\l}o, and C.W.J.
  Beenakker.
\newblock Localization landscape for {D}irac fermions.
\newblock {\em Phys.\ Rev.\ B}, 101:081405, 2020.

\bibitem{LenSchWir21}
D.~Lenz, M.~Schmidt, and M.~Wirth.
\newblock Uniqueness of form extensions and domination of semigroups.
\newblock {\em J.\ Funct.\ Anal.}, 280:108848, 2021.

\bibitem{LieSei10}
E.H. Lieb and R.~Seiringer.
\newblock {\em {The Stability of Matter in Quantum Mechanics}}.
\newblock Cambridge Univ.\ Press, Cambridge, 2010.

\bibitem{LuSte17}
J.~Lu and S.~Steinerberger.
\newblock {A variation on the Donsker--Varadhan inequality for the principal
  eigenvalue}.
\newblock {\em Proc.\ Royal Soc.\ London A}, 473(2204):20160877, 2017.

\bibitem{LuSte18}
J.~Lu and S.~Steinerberger.
\newblock Detecting localized eigenstates of linear operators.
\newblock {\em Research Math.\ Sci.}, 5:1--14, 2018.

\bibitem{LyrMayFil15}
M.L. Lyra, S.~Mayboroda, and M.~Filoche.
\newblock Dual landscapes in {A}nderson localization on discrete lattices.
\newblock {\em Europhys.\ Letters}, 109:47001, 2015.

\bibitem{MelOuhRoz04}
M.~Melgaard, E.M. Ouhabaz, and G.~Rozenblum.
\newblock {Negative discrete spectrum of perturbed multivortex Aharonov--Bohm
  Hamiltonians}.
\newblock {\em Ann.\ Henri Poincar\'e A}, 5:979--1012, 2004.

\bibitem{Mug13}
D.~Mugnolo.
\newblock Parabolic theory of the discrete $p$-{L}aplace operator.
\newblock {\em Nonlinear Anal., Theory Methods Appl.}, 87:33--60, 2013.

\bibitem{Mug14}
D.~Mugnolo.
\newblock {\em {Semigroup Methods for Evolution Equations on Networks}}.
\newblock Underst.\ Compl.\ Syst. Springer-Verlag, Berlin, 2014.

\bibitem{MugPlu23}
D.\ Mugnolo and M.\ Pl{\"u}mer.
\newblock On torsional rigidity and ground-state energy of compact quantum
  graphs.
\newblock {\em Calc.\ Var.}, 62:27, 2023.

\bibitem{Nag86}
R.\ Nagel, editor.
\newblock {\em One-{P}arameter {S}emigroups of {P}ositive {O}perators}, volume
  1184 of {\em Lect.\ Notes Math.}
\newblock Springer-Verlag, Berlin, 1986.

\bibitem{Ota84}
M.~{\^O}tani.
\newblock A remark on certain nonlinear elliptic equations.
\newblock {\em Proc.\ Fac.\ Sci.\ Tokai Univ.}, 19:23--28, 1984.

\bibitem{Ouh05}
E.M. Ouhabaz.
\newblock {\em Analysis of {H}eat {E}quations on {D}omains}, volume~30 of {\em
  Lond.\ Math.\ Soc.\ Monograph Series}.
\newblock Princeton Univ.\ Press, Princeton, NJ, 2005.

\bibitem{Par11}
J.-H. Park.
\newblock On a resonance problem with the discrete $p$-{L}aplacian on finite
  graphs.
\newblock {\em Nonlinear Anal., Theory Methods Appl.}, 74:6662--6675, 2011.

\bibitem{Pol48}
G.~P{\'o}lya.
\newblock {Torsional rigidity, principal frequency, electrostatic capacity and
  symmetrization}.
\newblock {\em Quart.\ Appl.\ Math}, 6:267--277, 1948.

\bibitem{Sch74}
H.H.\ Schaefer.
\newblock {\em {Banach Lattices and Positive Operators}}, volume 215 of {\em
  Grundlehren der mathematischen Wissenschaften}.
\newblock Springer-Verlag, Berlin, 1974.

\bibitem{Ste17}
S.~Steinerberger.
\newblock Localization of quantum states and landscape functions.
\newblock {\em Proc.\ Amer.\ Math.\ Soc.}, 145:2895--2907, 2017.

\bibitem{Ste21}
S.~Steinerberger.
\newblock Regularized potentials of {S}chrödinger operators and a local
  landscape function.
\newblock {\em Comm.\ Partial Differ.\ Equations}, 46:1262--1279, 2021.

\bibitem{WanZha21}
W.~Wang and S.~Zhang.
\newblock The exponential decay of eigenfunctions for tight-binding
  {H}amiltonians via landscape and dual landscape functions.
\newblock {\em Ann.\ Henri Poincar\'e}, 22:1429--1457, 2021.

\end{thebibliography}

\end{document}